\newcommand{\customitem}[1]{%
\item[#1]\protected@edef\@currentlabel{#1}%
}
\definecolor{myred}{rgb}{0.81, 0.06, 0.13}
\definecolor{myblue}{rgb}{0,0.5,0.6}
\def\lrc{\lambda_{rc}}
\def\lrr{\lambda_{rr}}
\def\lcc{\lambda_{cc}}
\def\Z{{\mathbb Z}}
\def\SNTA{S_{\mathrm{NTA}}}
\def\STA{S_{\mathrm{TA}}}
\def\SNBG{S_{\mathrm{NBG}}}
\def\SBG{S_{\mathrm{BG}}}
\DeclareMathOperator{\Aut}{Aut}
\newtheorem{theorem}{Theorem}[section]
\newtheorem{remark}[theorem]{Remark}
\newtheorem{corollary}[theorem]{Corollary}
\newtheorem{proposition}[theorem]{Proposition}
\newtheorem{lemma}[theorem]{Lemma}
\newtheorem{definition}[theorem]{Definition}
\newtheorem{question}[theorem]{Question}
\newtheorem{conjecture}[theorem]{Conjecture}
\begin{document}

\title{Near Triple Arrays}
\author{Alexey Gordeev, Klas Markstr{\"o}m, Lars-Daniel {\"O}hman\\
\small{Department of Mathematics and Mathematical Statistics, Ume\r{a} University, Sweden}}
\date{}

\maketitle

\begin{abstract}
We introduce \textit{near triple arrays} as binary row-column designs with at most two consecutive values for the replication numbers of symbols, for the intersection sizes of pairs of rows, pairs of columns and pairs of a row and a column.
Near triple arrays form a common generalization of such well-studied classes of designs as triple arrays, (near) Youden rectangles and Latin squares.

We enumerate near triple arrays for a range of small parameter sets and show that they exist in the vast majority of the cases considered.
As a byproduct, we obtain the first complete enumerations  of $6 \times 10$ triple arrays on $15$ symbols, $7 \times 8$ triple arrays on $14$ symbols and $5 \times 16$ triple arrays on $20$ symbols.

Next, we give several constructions for families of near triple arrays, and e.g. show that near triple arrays with 3 rows and at least 6 columns exist for any number of symbols.
Finally, we investigate a duality between row and column intersection sizes of a row-column design, and covering numbers for pairs of symbols by rows and columns. These duality results are used to obtain necessary conditions for the existence of near triple arrays. This duality also provides a new unified approach to earlier results on triple arrays and balanced grids.
\end{abstract}

%-----------------------------------------------------------------
\section{Introduction}\label{sec:intro}
%-----------------------------------------------------------------
The study of experimental designs that allow for eliminating the influence 
of multiple factors on an experiment began with the early works of Fisher and was developed further by among others Agrawal in the 1950:s and 60:s. 
Agrawal~\cite{agrawalMethodsConstructionDesigns1966} 
introduced a class of experimental designs 
that would later be known as \emph{triple arrays}. A triple array is 
a binary (no repeated symbols in any row or column), equireplicate (each symbol occurs the same number of times) array such that the following three intersection conditions hold:

\begin{enumerate}
    \customitem{(RC)}\label{TA:rc} Any row and column have a constant number of symbols in common,
	\customitem{(RR)}\label{TA:rr} Each pair of distinct rows has a constant number of symbols in common,
	\customitem{(CC)}\label{TA:cc} Each pair of distinct columns has a constant number of symbols in common.
\end{enumerate}

Condition~\ref{TA:rc} is often referred to as \textit{adjusted orthogonality}.
Bagchi and Shah~\cite{bagchiOptimalityClassRowcolumn1989} showed that triple arrays are statistically optimal among all binary equireplicate arrays with regard to a large class of optimality criteria.

Unfortunately, the intersection conditions~\ref{TA:rc}, \ref{TA:rr}, \ref{TA:cc} lead to some very restrictive constraints on the possible sizes of such designs.
For example, among arrays with at most $50$ cells, excluding Latin squares and assuming that at least one symbol is repeated, triple arrays exist only for sizes $3 \times 7$, $4 \times 7$, $5 \times 6$ and $4 \times 9$.
In the more recent literature, for example McSorley, Phillips, Wallis and Yucas~\cite{mcsorleyDoubleArraysTriple2005a}, these types of designs have mainly been studied without any relaxations on the three conditions stated above.
When relaxed versions have been studied, it has been done by completely removing one of the three conditions. For example, Bailey, Cameron and Nilson~\cite{baileySesquiarraysGeneralisationTriple2018} studied  \emph{sesqui arrays}, which are arrays where condition~\ref{TA:cc} is removed. 
This sacrifices the possibility of eliminating one factor affecting the experiment, while unfortunately not expanding the range of possible sizes very much.

In previous work by some of the present authors~\cite{jagerSmallYoudenRectangles2023}, relaxations
of \emph{Youden rectangles}, which likewise suffer from a restricted number of possible sizes, were studied.
Youden rectangles can be seen as $r \times v$ triple arrays on $v$ symbols, that is, the number of symbols is the same as the number of columns.
Note that for Youden rectangles conditions~\ref{TA:rc} and~\ref{TA:rr} hold trivially.
In~\cite{jagerSmallYoudenRectangles2023}, \emph{near Youden rectangles} were defined by relaxing condition~\ref{TA:cc} to allow two consecutive column intersection sizes as close as possible to the average intersection size.
It turns out that such arrays exist for a large majority of small sizes.

Our general approach in the present paper, which is similar to that of~\cite{jagerSmallYoudenRectangles2023}, is to define \textit{near triple arrays} by relaxing all three intersection properties to allow two values concentrated around the average intersection size, and also allowing two consecutive values as replication numbers.
This definition includes Latin squares, (near) Youden rectangles and triple arrays, which all are near triple arrays with additional restrictions placed on the number of rows, columns and symbols: for example, Latin squares are precisely $n \times n$ near triple arrays on $n$ symbols.
The class of near triple arrays is thus a common generalization of these well-known types of designs.

\begin{figure}[!ht]
\begin{center}
\begin{tabular}{c c c}
    \begin{tabular}{|*{9}{c}|}
        \hline
        0 & 1 & 2 & 3 & 4 & 5 & 6 & 7 & 8\\
        1 & 2 & 3 & 4 & 5 & 9 & 7 & 10 & 11\\
        4 & 6 & 0 & 9 & 10 & 7 & 11 & 8 & 2\\
        11 & 9 & 10 & 8 & 6 & 0 & 3 & 1 & 5\\
        \hline
    \end{tabular}
    & &
    \begin{tabular}{|*{6}{c}|}
        \hline
        0 & 1 & 2 & 3 & 4 & 5\\
        1 & 2 & 3 & 6 & 7 & 8\\
        4 & 0 & 6 & 8 & 5 & 7\\
        6 & 7 & 5 & 0 & 3 & 1\\
        \hline
    \end{tabular}    
\end{tabular}
\caption{A $4 \times 9$ triple array on $12$ symbols and a $4 \times 6$ near triple array on $9$ symbols.
In the first, each symbol occurs 3 times, any two rows share 6 symbols, any two columns share 1 symbol, and any row and column share 3 symbols.
In the second, each symbol occurs 2 or 3 times, any two rows share 3 or 4 symbols, any two columns share 1 or 2 symbols, and any row and column share 2 or 3 symbols.}
\label{fig:NTA}
\end{center}
\end{figure}

The \textit{column design} of a triple array is a block design with points and blocks corresponding to, respectively, columns and symbols of the array, with a point appearing in a block when the corresponding column contains the corresponding symbol.
The \textit{row design} can be defined similarly.
In the column design of a triple array, all blocks have the same size and each pair of points is covered by the same number of blocks, so it is a \textit{balanced incomplete block design (BIBD)}.
The column design of a near triple array belongs to a more general class of \textit{maximally balanced maximally uniform designs (MBMUDs)}, which were studied by Bofill and Torras~\cite{MBMUD}.
When the array is equireplicate, the column design belongs to the more well-known class of \textit{regular graph designs}, first introduced by John and Mitchell~\cite{johnOptimalIncompleteBlock1977}.
Our relaxation of triple arrays is thus also parallel to earlier efforts to generalize BIBDs.
We give a necessary condition for the existence of the column design of a near triple array and use it to show that there are no near triple arrays for several families of parameter sets. Conversely, we also give constructions for some families of near triple arrays, and thus prove existence for certain parameter sets.

Near triple arrays will by definition minimize the deviation of the size of the pairwise column intersections from their average.
In \cite{jagerSmallYoudenRectangles2023} it was shown that for equireplicate arrays with the same number of columns and symbols this will also make the covering numbers, i.e. the number of columns which contain a given pair of symbols, deviate as little as possible from their average.  As we will see later, for near triple arrays in general this is no longer true. Given that many classical block designs are defined in terms of their covering numbers,  it also becomes natural to consider arrays where the covering numbers are restricted to two consecutive values. When taking into account covering symbol pairs by both columns and rows, this leads to a family of arrays we call \textit{near balanced grids}.

Near balanced grids are in turn a generalization of \textit{balanced grids}, introduced by McSorley, Phillips, Wallis and Yucas~\cite{mcsorleyDoubleArraysTriple2005a}. Balanced grids are defined as binary arrays in which the number of columns and rows containing a given pair of symbols is constant, i.e. does not depend on the choice of the pair.
They showed that any $r \times c$ triple array on $r + c - 1$ symbols is a balanced grid, and later McSorley~\cite{mcsorleyDoubleArraysTriple2005} showed that the converse also holds.
We give an alternative proof of this result and show that it is in fact a special case of a more general relationship between near triple arrays and near balanced grids. More precisely, we show that for any given numbers of rows, columns and symbols, either near triple arrays on these parameters are precisely near balanced grids, or only one of these two classes of designs are possible for these parameters.

In addition to the results on the theory of near triple arrays, we also  completely enumerate near triple arrays for a range of small parameter sets, and show that they exist for a vast majority of combinations of small sizes of an array and the number of symbols.
As a byproduct of our enumeration efforts, we also obtain the first classification up to isotopism of $6 \times 10$ triple arrays on $15$ symbols, $7 \times 8$ triple arrays on $14$ symbols and $5 \times 16$ triple arrays on $20$ symbols.
We also extend previous enumerative results for (near) Youden rectangles, sesqui arrays and several other types of row-column designs.

\subsection{Overview}
The rest of the paper is structured as follows.
In Section~\ref{sec:defs}, we give the central definitions and discuss connections between near triple arrays and other types of row-column designs.
In Section~\ref{sec:enum}, we describe the algorithm we used to enumerate near triple arrays.
In Section~\ref{sec:res}, we present and discuss the results of the enumeration.
In Section~\ref{sec:exist}, we give general constructions and existence proofs.
In Section~\ref{sec:comp}, we study row and column designs of near triple arrays and derive non-existence conditions.
In Section~\ref{sec:near_balance}, we investigate the relationship between near triple arrays and near balanced grids.
In Section~\ref{sec:correct}, we describe measures taken to ensure correctness of our enumerative results.
Section~\ref{sec:concl} concludes with a list of open questions.

%-----------------------------------------------------------------
\section{Definitions and basic properties}\label{sec:defs}
%-----------------------------------------------------------------

An $r \times c$ \textit{row-column design} on $v$ symbols is a two-dimensional array with $r$ rows and $c$ columns, each cell of which is filled with one of $v$ symbols.
It is \textit{binary} if no symbol appears more than once in any row or column. 
In order for a row-column design to be binary, clearly the number of
symbols must be at least as large as the number of rows and columns,
that is, $\max(r,c) \leq v$.
On the other hand, if the number of symbols is larger than the number of cells, then some symbols are not even in use.
We therefore restrict our investigation to $\max(r,c) \leq v \leq rc$. To avoid rather trivial examples, we also generally only consider $r,c \geq 3$.

The \textit{average replication number} of a row-column design is $e := rc / v$.
Let $e^- := \lfloor e \rfloor$ and $e^+ := \lceil e \rceil$.
If $e$ is an integer and every symbol occurs $e$ times in the array, the row-column design is called \textit{equireplicate} with \textit{replication number} $e$. 
We will call a row-column design \textit{near equireplicate} if $e$ is not an integer and every symbol occurs in the array either $e^-$ or $e^+$ times.
For near equireplicate designs, we can count the number of occurrences of
the symbols as in the following lemma.

\begin{lemma}\label{lm:k-k+}
In a near equireplicate $r \times c$ row-column design on $v$ symbols, there are $v_- := v(e^+ - e)$ symbols occurring $e^-$ times and $v_+ := v(e - e^-)$ symbols occurring $e^+$ times.
\end{lemma}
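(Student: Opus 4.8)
The plan is to set up a simple two-variable counting argument. Let $a$ denote the number of symbols occurring $e^-$ times and $b$ the number occurring $e^+$ times. Since the design is near equireplicate, every one of the $v$ symbols occurs either $e^-$ or $e^+$ times, so these two classes partition the symbol set and we immediately get the first equation $a + b = v$. The second equation comes from counting cells: the array has exactly $rc$ cells, and each symbol contributes its number of occurrences, so counting occurrences by symbol class gives $e^- a + e^+ b = rc$. The claim is then that $a = v(e^+ - e)$ and $b = v(e - e^-)$, so the whole proof reduces to solving this linear system.

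The key steps, in order, are as follows. First I would record the two equations $a + b = v$ and $e^- a + e^+ b = rc$. Next I would use the defining relation $e = rc/v$, equivalently $rc = ev$, to rewrite the second equation as $e^- a + e^+ b = ev$. Then I would solve the system: multiplying the first equation by $e^+$ and subtracting the second yields $(e^+ - e^-) a = e^+ v - e v = v(e^+ - e)$, and since $e$ is not an integer we have $e^+ - e^- = 1$, so $a = v(e^+ - e) = v_-$. Symmetrically, $b = v - a = v - v(e^+ - e) = v(1 - e^+ + e) = v(e - e^-)$, using $e^+ = e^- + 1$ once more, which gives $b = v_+$.

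I would close by verifying the two expressions are consistent and nonnegative: since $e^- < e < e^+$ (as $e$ is non-integer), both $e^+ - e$ and $e - e^-$ lie strictly in $(0,1)$, so $v_-$ and $v_+$ are genuine counts, and their sum $v_- + v_+ = v(e^+ - e^-) = v$ recovers the total, confirming the partition is accounted for correctly.

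There is really no substantive obstacle here; the statement is a direct consequence of the definition of near equireplicate together with the single identity $e^+ - e^- = 1$, which holds precisely because $e$ is assumed non-integer. The only point requiring a moment's care is to invoke this non-integrality when dividing by $e^+ - e^-$ (so that the system is nondegenerate and the coefficient equals $1$); if $e$ were an integer the two classes would collapse and the formulas would read off trivially as $v_- = v$, $v_+ = 0$, which is exactly the equireplicate boundary case excluded by hypothesis.
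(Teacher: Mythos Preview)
Your proof is correct and follows essentially the same approach as the paper's: both set up the two equations $v_- + v_+ = v$ and $v_- e^- + v_+ e^+ = rc$ and solve the resulting linear system, with your write-up merely spelling out the algebra in slightly more detail.
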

\begin{proof}
Counting the total number of symbols and the total number of cells in the array, we get
\[
\begin{cases}
v_- + v_+ = v,\\
v_-e^- + v_+e^+ = rc.
\end{cases}
\]
Remembering that $e=rc/v$, the unique solution of this system is $v_- = v(e^+ - e)$, $v_+ = v(e - e^-)$.
\end{proof}

For a binary $r \times c$ row-column design on $v$ symbols, we denote by $\lrc$, $\lrr$, $\lcc$ the average number of common symbols between a row and a column, two rows, and two columns, respectively. More formally, if $R_i$ is the set of symbols occurring in row $i$ and $C_j$ is the set of symbols occurring in column $j$, then
\[
    \lrc := \frac{1}{rc}\sum_{i = 1}^r\sum_{j = 1}^c |R_i \cap C_j|; 
    \quad \lrr := \frac{1}{\binom{r}{2}}
    \sum_{1 \leq i < j \leq r} |R_i \cap R_j|; 
    \quad \lcc := \frac{1}{\binom{c}{2}}
    \sum_{1 \leq i < j \leq c} |C_i \cap C_j|.
\]

For a proof of the next lemma in a more restrictive setting, see Theorem 2.2 and Theorem 3.1 in~\cite{mcsorleyDoubleArraysTriple2005a}.

\begin{lemma}\label{lm:equiavg}
For a binary equireplicate $r \times c$ row-column design on $v$ symbols,
\begin{enumerate}[(a)]
    \item\label{eqavg:lrc} $\lrc = e$,
    \item\label{eqavg:lrr} $\lrr = \frac{c(e - 1)}{r - 1} = \frac{c(\lrc - 1)}{r - 1}$,
    \item\label{eqavg:lcc} $\lcc = \frac{r(e - 1)}{c - 1} = \frac{r(\lrc - 1)}{c - 1}$.
\end{enumerate}
\end{lemma}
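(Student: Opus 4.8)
The plan is to prove all three identities by double counting, exploiting the equireplicate hypothesis. The quantity $e = rc/v$ is the replication number, so each symbol appears in exactly $e$ cells. The key observation is that for a binary design, a symbol $s$ lies in $R_i \cap C_j$ precisely when the cell in row $i$ and column $j$ contains $s$; since the design is binary, the map sending a symbol in $R_i \cap C_j$ to that cell is well-defined, and in fact $|R_i \cap C_j|$ simply counts the symbols common to row $i$ and column $j$.

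First I would establish part~\eqref{eqavg:lrc}. The plan is to compute $\sum_{i,j} |R_i \cap C_j|$ by summing over symbols instead: for a fixed symbol $s$, the number of pairs $(i,j)$ with $s \in R_i \cap C_j$ equals (number of rows containing $s$) times (number of columns containing $s$). Since the design is binary and equireplicate, $s$ occupies $e$ cells lying in $e$ distinct rows and $e$ distinct columns, so this product is $e^2$. Summing over all $v$ symbols gives $\sum_{i,j}|R_i \cap C_j| = v e^2 = (rc/e) e^2 = rce$, and dividing by $rc$ yields $\lrc = e$.

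Next I would handle part~\eqref{eqavg:lrr} by a similar count, now summing $\sum_{i<j}|R_i \cap R_j|$ over symbols. A fixed symbol $s$ contributes to $|R_i \cap R_j|$ for each unordered pair of distinct rows both containing $s$; since $s$ appears in exactly $e$ rows, it contributes $\binom{e}{2}$ such pairs. Hence $\sum_{i<j}|R_i \cap R_j| = v\binom{e}{2}$, and dividing by $\binom{r}{2}$ gives
\[
\lrr = \frac{v \binom{e}{2}}{\binom{r}{2}} = \frac{(rc/e)\,\tfrac{e(e-1)}{2}}{\tfrac{r(r-1)}{2}} = \frac{rc(e-1)}{r(r-1)} = \frac{c(e-1)}{r-1},
\]
and the alternative form follows from part~\eqref{eqavg:lrc}. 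Part~\eqref{eqavg:lcc} is entirely symmetric: replace rows by columns, so $s$ appears in $e$ columns contributing $\binom{e}{2}$ pairs, giving $\lcc = v\binom{e}{2}/\binom{c}{2} = r(e-1)/(c-1)$.

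Since each step is a short symbol-wise double count, there is no genuine obstacle here; the only point requiring care is the justification that a binary equireplicate symbol occupies exactly $e$ distinct rows and $e$ distinct columns (so the counts $e^2$ and $\binom{e}{2}$ are correct rather than merely upper bounds). This is immediate from binarity: no symbol repeats within a row or column, so its $e$ occurrences lie in $e$ distinct rows and $e$ distinct columns. I would state this explicitly at the outset and then let the three computations run in parallel.
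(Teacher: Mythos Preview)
Your proof is correct and follows exactly the same double-counting approach as the paper: count $rc\lrc = ve^2$ for part~(a), count $\binom{r}{2}\lrr = v\binom{e}{2}$ for part~(b), and argue (c) by symmetry. One caveat: the sentence in your opening paragraph claiming that ``$s$ lies in $R_i \cap C_j$ precisely when the cell in row $i$ and column $j$ contains $s$'' is false (e.g.\ $s$ could sit at $(i,j')$ and $(i',j)$ without being at $(i,j)$), but you never actually use this, and your subsequent count---that the number of such pairs $(i,j)$ is $(\text{rows containing }s)\times(\text{columns containing }s)=e^2$---is the right one.
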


\begin{proof}
Each of the $v$ symbols occurs $e$ times, and, since the design is binary, each symbol is common to $e^2$ pairs of a row and a column.
Thus, the number of common symbols summed over all pairs of a row and a column is $rc\lrc = ve^2$, which implies \ref{eqavg:lrc}.

Similarly, each symbol is a common symbol of $\binom{e}{2}$ pairs of rows.
Thus, the number of common symbols summed over all pairs of two rows is
\[
\binom{r}{2}\lrr = v\binom{e}{2} = \frac{ve(e - 1)}{2} = \frac{rc(e - 1)}{2},
\]
which implies \ref{eqavg:lrr}.
A similar count gives \ref{eqavg:lcc}.
\end{proof}

We will need to extend Lemma~\ref{lm:equiavg} to the near equireplicate case.
Let $\lrc^- := \lfloor\lrc\rfloor$, $\lrc^+ := \lceil\lrc\rceil$, and define $\lrr^-$, $\lrr^+$, $\lcc^-$ and $\lcc^+$ similarly.

\begin{lemma}\label{lm:avg}
For a binary (near) equireplicate $r \times c$ row-column design on $v$ symbols,
\begin{enumerate}[(a)]
\item\label{avg:lrc} $\lrc = e^- + e^+ - \frac{e^-e^+}{e} = e + \frac{(e^+ - e)(e - e^-)}{e}$,
\item\label{avg:lrc+-} $\lrc^- = e^-$ and $\lrc^+ = e^+$,
\item\label{avg:lrr} $\lrr = \frac{c(\lrc - 1)}{r - 1}$,
\item\label{avg:lcc} $\lcc = \frac{r(\lrc - 1)}{c - 1}$.
\end{enumerate}
\end{lemma}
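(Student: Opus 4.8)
The plan is to generalize the counting argument of Lemma~\ref{lm:equiavg} to the near equireplicate case, where symbols now occur either $e^-$ or $e^+$ times. The key observation is that all the global sums being computed are additive over symbols, so I can split each sum according to whether a symbol is one of the $v_-$ symbols occurring $e^-$ times or one of the $v_+$ symbols occurring $e^+$ times, and apply Lemma~\ref{lm:k-k+} to substitute $v_- = v(e^+ - e)$ and $v_+ = v(e - e^-)$. For part~\ref{avg:lrc}, each symbol occurring $k$ times is common to exactly $k^2$ pairs of a row and a column (since the design is binary), so the double sum equals $v_-(e^-)^2 + v_+(e^+)^2$. Dividing by $rc = ve$ and inserting the values of $v_\pm$ should, after routine simplification, collapse to $e^- + e^+ - e^-e^+/e$; I would then verify the second displayed form $e + (e^+ - e)(e - e^-)/e$ is algebraically identical. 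Note that when $e$ is an integer we have $e^- = e^+ = e$, and the formula correctly reduces to $\lrc = e$, recovering Lemma~\ref{lm:equiavg}\ref{eqavg:lrc}.

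For part~\ref{avg:lrc+-}, I would argue that $\lrc$ lies strictly between $e^-$ and $e^+$ (when $e$ is non-integral) by inspecting the second expression: the correction term $(e^+ - e)(e - e^-)/e$ is strictly positive and, since $e^+ - e^- = 1$ here, bounded above by a quantity small enough to keep $\lrc < e^+$. Concretely, with $e^+ = e^- + 1$ one checks $\lrc - e^- = (e - e^-)(1 + (e^+-e)/e) \cdot(\text{something})$, so it suffices to show $0 < \lrc - e^- < 1$; the floor and ceiling statements then follow immediately. This is the step I expect to require the most care, since I must confirm the strict inequalities rather than merely compute an average, and handle the integer case separately (where $\lrc^- = \lrc^+ = e$ trivially).

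For parts~\ref{avg:lrr} and~\ref{avg:lcc}, I would use the same symbol-splitting idea: a symbol occurring $k$ times is a common symbol for exactly $\binom{k}{2}$ pairs of rows, so
\[
\binom{r}{2}\lrr = v_-\binom{e^-}{2} + v_+\binom{e^+}{2}.
\]
The cleanest route, however, is to avoid re-expanding this sum and instead observe that the total incidence count $\sum_i |R_i| = rc$ and the double-counting identity relating $\sum_i |R_i \cap R_j|$ to $\sum_{\text{symbols}} \binom{k}{2}$ can be reorganized to match the already-established value of $\lrc$. Specifically, summing $\binom{k}{2}$ over all symbols equals $\tfrac12(\sum_\text{symbols} k^2 - \sum_\text{symbols} k) = \tfrac12(rc\,\lrc - rc)$ by part~\ref{avg:lrc}, so $\binom{r}{2}\lrr = \tfrac12 rc(\lrc - 1)$, which rearranges directly to $\lrr = c(\lrc - 1)/(r-1)$. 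An identical computation with columns gives part~\ref{avg:lcc}. This reuse of part~\ref{avg:lrc} is what makes parts~\ref{avg:lrr} and~\ref{avg:lcc} essentially immediate and lets the proof parallel Lemma~\ref{lm:equiavg} with $e$ replaced by $\lrc$ throughout.
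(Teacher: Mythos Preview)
Your proposal is correct and follows essentially the same double-counting strategy as the paper. The one minor difference is in parts~\ref{avg:lrr} and~\ref{avg:lcc}: the paper expands $v_-\binom{e^-}{2} + v_+\binom{e^+}{2}$ directly and then recognizes the result as $e\lrc - e$ via the earlier computation, whereas you take the cleaner shortcut of writing $\sum_{\text{symbols}} \binom{k}{2} = \tfrac12\bigl(\sum k^2 - \sum k\bigr) = \tfrac12(rc\,\lrc - rc)$, which reuses part~\ref{avg:lrc} more transparently and avoids repeating the expansion. Your sketch of part~\ref{avg:lrc+-} is a bit vague as written (the ``$\cdot(\text{something})$'' line), but the intended inequality $\lrc = e + (e^+ - e)\cdot\frac{e - e^-}{e} < e + (e^+ - e) = e^+$ is exactly what the paper uses, so the argument goes through.
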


\begin{remark}\label{rem:elrc}
Lemma~\ref{lm:avg}~\ref{avg:lrc} implies that $\lrc \geq e$, and that the equality holds if and only if $e$ is an integer, i.e. the row-column design is equireplicate.
\end{remark}

\begin{proof}[Proof of Lemma~\ref{lm:avg}]
For an equireplicate design all claims hold due to Lemma~\ref{lm:equiavg}, so from now on we assume that it is near equireplicate, that is, $e^- < e < e^+$.
Noting that $v=v_-+v_+$, a double counting argument similar to the proof of Lemma~\ref{lm:equiavg} together with Lemma~\ref{lm:k-k+} gives that the number of common symbols summed over all pairs of a row and a column is
\[
rc\lrc = v_-(e^-)^2 + v_+(e^+)^2 = v(e^+ - e)(e^-)^2 + v(e - e^-)(e^+)^2.
\]
Dividing by $v$ and using $e=rc/v$, we get
\begin{equation}\label{eq:elrc}
e\lrc = (e^+ - e)(e^-)^2 + (e - e^-)(e^+)^2 = e((e^+)^2 - (e^-)^2) + e^-e^+(e^- - e^+).
\end{equation}
Note that $e^+ - e^- = 1$, so $(e^+)^2 - (e^-)^2 = e^+ + e^-$, thus
\[
e\lrc= e(e^+ + e^-) - e^-e^+,
\]
from which \ref{avg:lrc} follows.
In order to prove \ref{avg:lrc+-} it suffices to show that $e^- < \lrc < e^+$, which follows from \ref{avg:lrc}:
\[
e^- < e < e + \frac{(e^+ - e)(e - e^-)}{e} = \lrc,
\]
\[
\lrc = e + \frac{(e^+ - e)(e - e^-)}{e} = e + (e^+ - e) \cdot \frac{e - e^-}{e} < e + (e^+ - e) = e^+.
\]
Again using double counting and Lemma~\ref{lm:k-k+}, the number of common symbols summed over all pairs of rows is
\[
\binom{r}{2}\lrr = v_- \binom{e^-}{2} + v_+ \binom{e^+}{2} = v(e^+ - e) \binom{e^-}{2} + v(e - e^-) \binom{e^+}{2}.
\]
Dividing by $v / 2$, we get
\begin{align*}
\frac{r(r - 1)\lrr}{v} &= (e^+ - e)e^-(e^- - 1) + (e - e^-)e^+(e^+ - 1)\\
&= e((e^+)^2 - (e^-)^2) + e^+e^-(e^- - e^+) + e(e^- - e^+)\\
&= e\lrc - e,
\end{align*}
where the last equality follows from~\eqref{eq:elrc}.
This implies \ref{avg:lrr}, and a similar count gives \ref{avg:lcc}.
\end{proof}

Having done this ground work, we now formally define our main object of study.

\begin{definition}\label{def:NTA}
An $(r \times c, v)$-near triple array is a binary (near) equireplicate $r \times c$ row-column design on $v$ symbols in which
\begin{enumerate}
\item any row and column have either $\lrc^-$ or $\lrc^+$ common symbols,
\item any two rows have either $\lrr^-$ or $\lrr^+$ common symbols,
\item any two columns have either $\lcc^-$ or $\lcc^+$ common symbols.
\end{enumerate}
\end{definition}

Note that the definition allows a near triple array to be equireplicate. To avoid having to calculate the quantities involved in the definition
explicitly, it will often be convenient to instead use the following alternative characterization of near triple arrays.

\begin{proposition}\label{prop:altdefNTA}
A binary $r \times c$ row-column design on $v$ symbols in which, for some integers $x$, $x_{rc}$, $x_{rr}$ and $x_{cc}$,
\begin{enumerate}
\item any symbol occurs $x$ or $x + 1$ times,
\item any row and column have either $x_{rc}$ or $x_{rc} + 1$ common symbols,
\item any two rows have either $x_{rr}$ or $x_{rr} + 1$ common symbols,
\item any two columns have either $x_{cc}$ or $x_{cc} + 1$ common symbols,
\end{enumerate}
is an $(r \times c, v)$-near triple array. Conversely, any near triple
array satisfies all these conditions.
\end{proposition}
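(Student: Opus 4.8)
The plan is to reduce both directions of the equivalence to a single elementary fact about finite multisets of integers, and then apply that fact to each of the four families of counts attached to the design: the replication numbers, the row-column intersection sizes, the row-row intersection sizes, and the column-column intersection sizes. Since Definition~\ref{def:NTA} already phrases everything in terms of the floors and ceilings $\lrc^-,\lrc^+$, etc., of the relevant averages, no explicit evaluation of those averages (and hence none of the formulas in Lemma~\ref{lm:avg}) should be needed; only the fact that $\lrc$, $\lrr$, $\lcc$ are \emph{by definition} the averages of the corresponding multisets will be used.

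The key observation I would isolate first is this: for a nonempty finite multiset of integers with average $\mu$, all of its elements lie in $\{y,y+1\}$ for some integer $y$ if and only if all of its elements lie in $\{\lfloor\mu\rfloor,\lceil\mu\rceil\}$. One direction is immediate, since $\lceil\mu\rceil\in\{\lfloor\mu\rfloor,\lfloor\mu\rfloor+1\}$ always, so taking $y=\lfloor\mu\rfloor$ gives $\{\lfloor\mu\rfloor,\lceil\mu\rceil\}\subseteq\{y,y+1\}$; this is the direction used for the converse half of the proposition. For the other direction, if every element lies in $\{y,y+1\}$ then $y\le\mu\le y+1$. If $\mu$ is not an integer, then $\lfloor\mu\rfloor=y$ and $\lceil\mu\rceil=y+1$, so the two sets coincide and we are done. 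If $\mu$ is an integer, then $\mu\in\{y,y+1\}$, and here one invokes that $\mu$ is genuinely the average: an average equal to $y$ with all elements in $\{y,y+1\}$ forces every element to equal $y$, and symmetrically for $y+1$, so every element equals $\mu=\lfloor\mu\rfloor=\lceil\mu\rceil$.

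I would then apply the observation four times. For the replication condition, the multiset of the replication numbers of the $v$ symbols has average $rc/v=e$, so the observation makes condition~1 equivalent to every symbol occurring $e^-$ or $e^+$ times, i.e. to the design being (near) equireplicate, which is precisely the standing hypothesis of Definition~\ref{def:NTA}. For the remaining three conditions, the relevant multisets are the $rc$ values $|R_i\cap C_j|$, the $\binom{r}{2}$ values $|R_i\cap R_j|$, and the $\binom{c}{2}$ values $|C_i\cap C_j|$, whose averages are $\lrc$, $\lrr$, $\lcc$ by definition. The observation then shows that condition~2 is equivalent to all row-column intersections lying in $\{\lrc^-,\lrc^+\}$, and likewise conditions~3 and~4 are equivalent to the row-row and column-column conditions of Definition~\ref{def:NTA}. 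Putting these four equivalences together yields exactly the claimed statement in both directions.

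The main obstacle, and essentially the only place where there is genuine content, is the integer-average case in the second direction of the observation. There it is essential that $\mu$ is the true average rather than merely some real number in $[y,y+1]$: it is this averaging constraint that rules out both $y$ and $y+1$ occurring at once, and so guarantees that the interval $\{y,y+1\}$ supplied by the weaker characterization is forced to collapse onto $\{\lfloor\mu\rfloor,\lceil\mu\rceil\}$ in the degenerate integer case. Everything else is bookkeeping.
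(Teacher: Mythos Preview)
Your proposal is correct and takes essentially the same approach as the paper: both argue that if all values of a multiset lie in $\{y,y+1\}$ then the average $\mu$ satisfies $y\le\mu\le y+1$, with the integer-average case forcing all values equal, and then apply this to each of the four families of counts. The paper compresses this into the phrases ``by elementary properties of averages'' and ``by similar arguments'', whereas you have isolated the averaging observation as a standalone lemma and applied it uniformly; this is an organizational refinement rather than a different route.
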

\begin{proof}
Since $e$ is the average replication number, it holds that $x \leq e \leq x + 1$, 
by elementary properties of averages. 
If $e = x$ (or $e = x + 1$), then each symbol occurs $x$ (or $x + 1$) times, in which case the design is equireplicate. If $x < e < x + 1$, it follows that $e^{-} = x$ and $e^{+} = x+1$, and the design is near equireplicate.
By similar arguments, all other conditions from Definition~\ref{def:NTA} hold. The converse follows trivially from Definition~\ref{def:NTA}.
\end{proof}

In the remainder of this section, we describe how several well-studied classes of row-column designs are in fact near triple arrays with parameters $r, c, v$ restricted.

An \textit{$(r \times c, v)$-triple array} is a binary equireplicate $r \times c$ row-column design on $v$ symbols in which any row and column have a constant number of symbols in common, and the same is true for any pair of rows and for any pair of columns.
Clearly, a necessary condition for the existence of a triple array is that the corresponding $e$, $\lrc$, $\lrr$, $\lcc$ are integers.
We will call parameters $(r \times c, v)$ \textit{admissible for triple arrays} when this is the case.
Note that for admissible parameters $(r \times c, v)$, near triple arrays are precisely triple arrays.

For a near triple array in general, each of $\lrc$, $\lrr$ and $\lcc$ can either be or not be an integer, as can be seen by considering arrays in Figures~\ref{fig:NTA} and~\ref{fig:exnta} and their transposes.
Due to Remark~\ref{rem:elrc}, we know that $\lrc$ is an integer precisely when the design is equireplicate.

\begin{figure}[!ht]
\begin{center}
    \begin{tabular}{c c c}
	\begin{tabular}{|*{6}{c}|}
        \hline
        0 & 1 & 2 & 3 & 4 & 5\\
        1 & 2 & 0 & 6 & 7 & 8\\
        3 & 9 & 7 & 5 & 8 & 0\\
        6 & 5 & 9 & 7 & 1 & 4\\
        \hline
        \multicolumn{6}{c}{}\\[-0.2cm]
        \multicolumn{6}{c}{(a)}
    \end{tabular}
    & &
    \begin{tabular}{|*{6}{c}|}
        \hline
        0 & 1 & 2 & 3 & 4 & 5\\
        1 & 0 & 3 & 4 & 2 & 6\\
        2 & 3 & 0 & 5 & 6 & 1\\
        4 & 6 & 5 & 1 & 3 & 2\\
        \hline
        \multicolumn{6}{c}{}\\[-0.2cm]
        \multicolumn{6}{c}{(b)}
    \end{tabular}\\
	& &\\
    \begin{tabular}{|*{7}{c}|}
        \hline
        0 & 1 & 2 & 3 & 4 & 5 & 6\\
        1 & 2 & 7 & 4 & 8 & 9 & 10\\
        9 & 11 & 12 & 13 & 6 & 3 & 7\\
        12 & 13 & 5 & 10 & 11 & 8 & 0\\
        \hline
        \multicolumn{7}{c}{}\\[-0.2cm]
        \multicolumn{7}{c}{(c)}
    \end{tabular}
    & &
    \begin{tabular}{|*{6}{c}|}
        \hline
        0 & 1 & 2 & 3 & 4 & 5\\
        1 & 6 & 3 & 7 & 8 & 9\\
        8 & 5 & 10 & 11 & 2 & 7\\
        10 & 11 & 9 & 4 & 6 & 0\\
        \hline
        \multicolumn{6}{c}{}\\[-0.2cm]
        \multicolumn{6}{c}{(d)}
    \end{tabular}
	\end{tabular}
	\end{center}
\caption{Near triple arrays on parameters (a) $(4 \times 6, 10)$, (b) $(4 \times 6, 7)$, (c) $(4 \times 7, 14)$ and (d) $(4 \times 6, 12)$.
Designs (c) and (d) are equireplicate, and so have integer $\lrc$.
Designs (a), (b) and (d) have integer $\lrr$, and design (b) has integer $\lcc$.}
\label{fig:exnta}
\end{figure}

A binary equireplicate $r \times n$ row-column design on $n$ symbols is a \textit{Latin rectangle}, and if $r=n$ it is called a 
\textit{Latin square}. We note that for any Latin square, the conditions of Definition~\ref{def:NTA} are satisfied trivially, so $(n \times n, n)$-near triple arrays are precisely $n \times n$ Latin squares.
A \textit{Youden rectangle} is a Latin rectangle in which any pair of columns has $\lambda$ common symbols for some fixed $\lambda$, and a \textit{near Youden rectangle} is a Latin rectangle in which any pair of columns has either $\lcc^-$ or $\lcc^+$ symbols in common.
We see that $(r \times n, n)$-near triple arrays are precisely $r \times n$ (near) Youden rectangles.

%-----------------------------------------------------------------
\section{Enumeration method}\label{sec:enum}
%-----------------------------------------------------------------

The group $G_{v, r, c} := S_v \times S_r \times S_c$ of \textit{isotopisms} acts on the set of $(r \times c, v)$-near triple arrays, where $S_v$, $S_r$ and $S_c$ correspond to permutations of symbols, rows and columns, respectively.
Two $(r \times c, v)$-near triple arrays $T$ and $T'$ are \textit{isotopic} (belong to the same \textit{isotopism class}) if there exists $\varphi \in G_{v, r, c}$ such that $\varphi(T) = T'$.
An \textit{autotopism} of $T$ is an isotopism $\varphi \in G_{v, r, c}$ such that $\varphi(T) = T$, and all autotopisms of $T$ form its \textit{autotopism group} $\Aut(T) \leq G_{v, r, c}$. We will generally only report the size of the autotopism group, but note that this does not uniquely identify the group from an abstract point of view, and in turn, knowing the structure of the group does not determine how it acts on the array.

For a given set of parameters $r, c, v$, we generate all non-isotopic $(r \times c, v)$-near triple arrays on the set of symbols $\{0, 1, \dots, v - 1\}$ by starting from an empty $r \times c$ array and filling cells with symbols, one cell at a time, in the order
\begin{equation}\label{eq:order}
(1, 1), (1, 2), \dots, (1, c), (2, 1), \dots, (2, c), \dots, (r, 1), \dots, (r, c),
\end{equation}
that is, row by row, and within each row in order of increasing column number.
The partial objects we consider during the generation are thus \textit{partially filled arrays} where only the cells corresponding to a prefix of the sequence~\eqref{eq:order} are filled.
We define a subgroup of isotopisms $G_{v,r,c}^i \leq G_{v,r,c}$ that acts on the set of partially filled arrays with the first $i$ cells filled, such that isotopisms in $G_{v,r,c}^i$ consist only of permutations of rows and columns which map the set of the first $i$ cells of the sequence~\eqref{eq:order} onto itself.
A \textit{canonical representative of an isotopism class}, or simply a \textit{canonical} array, is the array which is lexicographically minimal in its isotopism class when viewed as a sequence of symbols written in order~\eqref{eq:order}.

At each step of the enumeration process, we exclude a partially filled array from further consideration if we can deduce that it cannot be completed to a near triple array (see Section~\ref{ssec:compl}) or if it is not canonical (see Section~\ref{ssec:canon}).
At the end of the process, we reject a complete object if it is not canonical.
Note that a consequence of this is that the complete objects produced are normalized, in the sense that their first row is $0, 1, \ldots, c-1$ and cell $(2,1)$ has symbol $1$.

Every non-rejected complete object is a canonical near triple array, so no two arrays from the same isotopism class are generated.
On the other hand, any partially filled array which can be completed to a canonical near triple array must itself be canonical, thus all non-isotopic near triple arrays are generated.
This approach is often called \textit{orderly generation}, and was introduced independently by Farad\v{z}ev~\cite{faradvzev1978constructive} and Read~\cite{read1978every}.
See chapter 4 of Kaski and \"Osterg{\aa}rd's book~\cite{kaskiClassificationAlgorithmsCodes2006} for an overview of such algorithms in a more general setting.

Similar to many other design enumeration problems, the number of partial objects during our search often exceeds the number of complete near triple arrays by many orders of magnitude.
This has been a major bottleneck in several recent enumeration efforts for other row column designs~\cite{jagerSmallYoudenRectangles2023,jagerEnumerationRowColumnDesigns2024}.
We address this problem in two ways.

First, a major advantage of our approach is that different branches of the search tree are completely independent.
In particular, no additional cross-checks of the results to remove isotopic copies of the same array are required, so there is no need to simultaneously store all partial objects obtained at any given step.
It means that, even though the number of partial objects may be huge, we are not limited by storage space constraints.
The independence of different branches of the search tree also makes the parallelization of tasks straightforward.

Second, as stated earlier, we generate near triple arrays by filling one cell at a time.
This is contrary to a more common approach, when row-column designs, or, more generally, incidence matrices of combinatorial structures, are generated one row at a time.
Making the steps of the generation procedure as small as possible allows us to detect and reject non-completable partial objects much sooner, reducing the total size of the search tree.

The algorithms used were implemented in C++ and run in a parallelized version on the Kebnekaise supercomputer at High Performance Computing Center North (HPC2N).
The source code is available at~\cite{zenodo}.
The most computationally heavy case, $(7 \times 7, 11)$, required roughly 17 core years.
The total run time for all cases was around a hundred core years.

%-----------------------------------------------------------------
\subsection{Completability of partial objects}\label{ssec:compl}
%-----------------------------------------------------------------

Let $T$ be a partially filled array.
If $T$ is \textit{non-completable}, that is, cannot be completed to a near triple array, we would like to be able to detect this as early as possible, so that we can drop $T$ from consideration in our search and thus avoid doing unnecessary work.
We try to deduce if $T$ is non-completable via the following procedure.

Denote by $R_i$ and $C_j$, respectively, sets of symbols appearing in row $i$ and column $j$ of $T$.
For each row $i$ of $T$, we maintain two sets of symbols $A_{R_i}$ and $B_{R_i}$ which ``estimate'' the possible content of row $i$ in the completed array in the following way: For any near triple array $T'$ completed from $T$, if the set of symbols appearing in row $i$ of $T'$ is $R'_i$, then
\[
A_{R_i} \subseteq R'_i \subseteq B_{R_i}.
\]
Since for any such $T'$, the set $R'_i$ would contain the set $R_i$,
a trivial way to get these estimates is to set $A_{R_i}$ to be $R_i$, and $B_{R_i}$ to be the set of all $v$ symbols.

As an example of how the sets $A_{R_i}$ and $B_{R_i}$ can be used, note that the set of common symbols $R'_i \cap R'_j$ between two rows $i$ and $j$ in a completed array $T'$ must satisfy
\begin{equation}\label{eq:RRintr}
A_{R_i} \cap A_{R_j} \subseteq R'_i \cap R'_j \subseteq B_{R_i} \cap B_{R_j}.
\end{equation}
In particular, if $|A_{R_i} \cap A_{R_j}| > \lrr^+$ or $|B_{R_i} \cap B_{R_j}| < \lrr^-$, then $T$ is non-completable.
Thus, the better estimate sets we have, that is, the larger sets $A_{R_i}$ and the smaller sets $B_{R_i}$, the more we can say about the non-completability of $T$.

Fortunately, intersection conditions like~\eqref{eq:RRintr} can also sometimes be used to improve the estimate sets. For example, if $|A_{R_i} \cap A_{R_j}| = \lrr^+$, then the contents $R'_i$ and $R'_j$ of rows $i$ and $j$ of any near triple array $T'$ completed from $T$ must satisfy
\[
    R'_i \subseteq B_{R_i} \setminus (A_{R_j} \setminus A_{R_i}),
    \quad R'_j \subseteq B_{R_j} \setminus (A_{R_i} \setminus A_{R_j}),
\]
so we can replace the estimate sets $B_{R_i}$ and $B_{R_j}$ with the smaller sets $B_{R_i} \setminus (A_{R_j} \setminus A_{R_i})$ and $B_{R_j} \setminus (A_{R_i} \setminus A_{R_j})$, respectively.
Similarly, if $|B_{R_i} \cap B_{R_j}| = \lrr^-$, then it must hold that
\[
    A_{R_i} \cup (B_{R_i} \cap B_{R_j}) \subseteq R'_i,
    \quad A_{R_j} \cup (B_{R_i} \cap B_{R_j}) \subseteq R'_j,
\]
so in this case we can replace $A_{R_i}$ and $A_{R_j}$ with the larger sets $A_{R_i} \cup (B_{R_i} \cap B_{R_j})$ and $A_{R_j} \cup (B_{R_i} \cap B_{R_j})$, respectively.

In a similar manner, we also maintain estimate sets $A_{C_j}$ and $B_{C_j}$ satisfying
\[
    A_{C_j} \subseteq C'_j \subseteq B_{C_j}
\]
for the content $C'_j$ of column $j$ in any near triple array $T'$ completed from $T$, and use intersection conditions analogous to~\eqref{eq:RRintr} to estimate the sets of common symbols between a pair of columns, or between a row and a column in a completed array, as well as to update the estimate sets.

In addition, for each empty cell $(i, j)$ of $T$ we maintain an estimate set $B_{i, j}$ satisfying $T'_{i, j} \in B_{i, j}$ for any near triple array $T'$ completed from $T$.
A trivial estimate set $B_{i, j}$ is the set of all symbols not yet used in row $i$ and column $j$ of $T$.
For convenience, we also set $B_{i, j} = \{T_{i, j}\}$ for cells $(i,j)$ of $T$ which are already filled.
We use the conditions
\begin{equation}\label{eq:cellCond}
    B_{i, j} \subseteq (B_{R_i} \setminus R_i) \cap (B_{C_j} \setminus C_j),\quad
    B_{R_i} \subseteq \bigcup_{j = 1}^c B_{i, j},\quad
    B_{C_j} \subseteq \bigcup_{i = 1}^r B_{i, j},
\end{equation}
where the first condition is only for empty cells $(i, j)$ of $T$,
to try to deduce that $T$ is non-completable, or to further improve the estimate sets.
For example, $(B_{R_i} \setminus R_i) \cap (B_{C_j} \setminus C_j) = \emptyset$ would imply non-completability.

Starting from trivial estimate sets, we iteratively check all row-row, row-column and column-column intersection conditions as well as the conditions in~\eqref{eq:cellCond} trying to either deduce that $T$ is non-completable, or to improve the estimate sets.
We stop either when we have successfully shown that $T$ is non-completable, in which case we remove $T$ from further consideration in our search, or when we can no longer improve any of the estimate sets.
As a byproduct of this procedure, for a non-rejected array we in particular obtain a set of symbols $B_{i, j}$ which may be put into the next empty cell $(i,j)$ of $T$ without directly violating properties of near triple arrays. A complete and more rigorous description of all checks performed on estimate sets is given in Appendix~\ref{ap:compl}.

Given a set of at most $64$ elements, any subset of it can be interpreted as a $64$-bit integer. Set operations on such subsets correspond to bitwise operations on standard integer types and can thus be computed very efficiently. In our search, we only considered cases where the number of symbols is less than $64$, so we used such operations when working with estimate sets.

%-----------------------------------------------------------------
\subsection{Canonicity}\label{ssec:canon}
%-----------------------------------------------------------------

Let $T$ be either a partially filled array with $i$ cells filled, or a complete near triple array, and let $G$ be the set of isotopisms acting on $T$, that is, $G = G_{v,r,c}^i$ if $T$ is a partial object with $i$ cells filled, or $G = G_{v,r,c}$ if $T$ is a completed near triple array. Recall that an isotopism $\varphi \in G$ consists of permutations $\pi_r$, $\pi_c$ and $\pi_v$ of rows, columns and symbols, respectively. If $T$ is a partial object, $\varphi$ maps the set of first $i$ cells of~\eqref{eq:order} onto itself.
The basic approach to determine whether $T$ is canonical is performing a brute-force search for an isotopism $\varphi \in G$ with $\varphi(T)$ lexicographically smaller than $T$.

We can, however, do slightly better than a pure brute-force search that checks every combination of $\pi_r$, $\pi_c$ and $\pi_v$. Note that given $\pi_r$ and $\pi_c$, it is easy to find the unique $\pi_v$ which lexicographically minimizes $\varphi(T)$, by observing that $\pi_v$ should reorder symbols so that they appear for the first time in increasing order when looking at cells of $\varphi(T)$ in order~\eqref{eq:order}.
Thus it is sufficient to generate permutations $\pi_r$ and $\pi_c$ and determine the optimal $\pi_v$ from them.

Additionally, if for partially generated permutations $\pi_r$ and $\pi_c$ we can already determine that $T$ is lexicographically larger than $\varphi(T)$, then $T$ is not canonical and can be excluded from further search.
Similarly, if we can already see that $\varphi(T)$ is lexicographically larger than $T$, we can exclude the current partially generated $\pi_r$ and $\pi_c$ from further consideration.
As a further optimization during the canonicity checking procedure, we bunch some sets of partial permutations of columns together when we can apply the same arguments to all of them.
For example, if for some subset of columns all symbols appearing in these columns appear in $T$ only once, we can bunch together all permutations $\pi_c$ mapping this subset onto itself.

As a byproduct of the canonicity checking procedure, for a completed near triple array $T$ we determine the size of its autotopism group.

Since each isotopism class has a unique canonical representative, the canonicity check can be seen as eliminating isotopic mates of the (partial) objects produced.
A very common way of performing such an elimination is by translating the objects to an equivalent set of graphs and using a package such as nauty.
We have not employed this method, because in such packages it is usually hard to control which particular member of the automorphism class would be chosen as canonical, and our generation procedure relies heavily on our particular definition of the canonical representative.

%-----------------------------------------------------------------
\section{Enumeration results}\label{sec:res}
%-----------------------------------------------------------------

%-----------------------------------------------------------------
\subsection{Near triple arrays}\label{ssec:res_nta}
%-----------------------------------------------------------------

In Tables~\ref{tbl:nta3} through~\ref{tbl:nta67} in Appendix~\ref{ap:nta_tables}, we present results on enumeration of near triple arrays for $r=3,4, 5, 6, 7$, for as large $c$ as we could manage.
With some exceptions due to size restrictions, the data we generated is available at~\cite{row_col_data}.
For some parameter sets, presented in Table~\ref{tbl:nonta}, our exhaustive search did not find any complete objects, so there are no near triple arrays on these parameters.

\begin{table}[!ht]
\begin{center}
\begin{tabular}{|c|c||l|l|l|l||c||c|c|c|}
\hline
$r \times c$ & $v$ & $e$ & $\lrc$ & $\lrr$ & $\lcc$ & Comment & $\omega_{rc}$ & $\omega_{rr}$ & $\omega_{cc}$ \\
\hline
$3 \times 3$ & $6$ & $1.5$ & $1.66...$ & $1$ & $1$ &  &  $+$ & $+$ & $+$ \\
\hline
$3 \times 4$ & $6$ & $2$ & $2$ & $2$ & $1$ & R\ref{rmrk:false_ptrn} &  $+$ & $-$ & $+$ \\
\hline
$3 \times 5$ & $8$ & $1.875$ & $1.93...$ & $2.33...$ & $0.7$ &  &  $+$ & $-$ & $+$ \\
\hline
$4 \times 4$ & $9$ & $1.77...$ & $1.875$ & $1.16...$ & $1.16...$ &  &  $+$ & $+$ & $+$ \\
\hline
$4 \times 5$ & $7$ & $2.85...$ & $2.9$ & $3.16...$ & $1.9$ &  &  $+$ & $-$ & $+$ \\
\hline
$4 \times 5$ & $10$ & $2$ & $2$ & $1.66...$ & $1$ & R\ref{rmrk:false_ptrn},T\ref{thm:dual_nta-nbg} &  $+$ & $-$ & $+$ \\
\hline
$4 \times 6$ & $8$ & $3$ & $3$ & $4$ & $1.6$ & T\ref{thm:dual_nta-nbg} &  $+$ & $-$ & $+$ \\
\hline
$4 \times 7$ & $9$ & $3.11...$ & $3.14...$ & $5$ & $1.42...$ &  &  $+$ & $-$ & $+$ \\
\hline
$4 \times 10$ & $12$ & $3.33...$ & $3.4$ & $8$ & $1.06...$ &  &  $+$ & $-$ & $+$ \\
\hline
$4 \times 11$ & $13$ & $3.38...$ & $3.45...$ & $9$ & $0.98...$ & T\ref{thm:nonta_column} &  $-$ & $-$ & $+$ \\
\hline
$5 \times 5$ & $8$ & $3.125$ & $3.16$ & $2.7$ & $2.7$ &  &  $+$ & $+$ & $+$ \\
\hline
$5 \times 6$ & $8$ & $3.75$ & $3.8$ & $4.2$ & $2.8$ &  &  $+$ & $+$ & $+$ \\
\hline
$5 \times 6$ & $15$ & $2$ & $2$ & $1.5$ & $1$ & R\ref{rmrk:false_ptrn} &  $+$ & $-$ & $+$ \\
\hline
$5 \times 7$ & $9$ & $3.88...$ & $3.91...$ & $5.1$ & $2.42...$ &  &  $+$ & $-$ & $+$ \\
\hline
$5 \times 8$ & $10$ & $4$ & $4$ & $6$ & $2.14...$ & T\ref{thm:dual_nta-nbg} &  $+$ & $-$ & $+$ \\
\hline
$5 \times 8$ & $11$ & $3.63...$ & $3.7$ & $5.4$ & $1.92...$ &  &  $+$ & $-$ & $+$ \\
\hline
$5 \times 9$ & $11$ & $4.09...$ & $4.11...$ & $7$ & $1.94...$ & T\ref{thm:dual_bd} &  $?$ & $-$ & $+$ \\
\hline
$5 \times 9$ & $12$ & $3.75$ & $3.8$ & $6.3$ & $1.75$ &  &  $+$ & $?$ & $+$ \\
\hline
$5 \times 9$ & $15$ & $3$ & $3$ & $4.5$ & $1.25$ & T\ref{thm:dual_nta-nbg} &  $+$ & $-$ & $+$ \\
\hline
$5 \times 10$ & $12$ & $4.16...$ & $4.2$ & $8$ & $1.77...$ &  &  $+$ & $-$ & $+$ \\
\hline
$5 \times 10$ & $13$ & $3.84...$ & $3.88$ & $7.2$ & $1.6$ &  &  $+$ & $?$ & $+$ \\
\hline
$5 \times 10$ & $17$ & $2.94...$ & $2.96$ & $4.9$ & $1.08...$ &  &  $?$ & $?$ & $+$ \\
\hline
$5 \times 10$ & $18$ & $2.77...$ & $2.84$ & $4.6$ & $1.02...$ &  &  $?$ & $?$ & $+$ \\
\hline
$6 \times 7$ & $10$ & $4.2$ & $4.23...$ & $4.53...$ & $3.23...$ &  &  $+$ & $+$ & $+$ \\
\hline
$6 \times 7$ & $11$ & $3.81...$ & $3.85...$ & $4$ & $2.85...$ &  &  $+$ & $+$ & $-$ \\
\hline
$6 \times 7$ & $14$ & $3$ & $3$ & $2.8$ & $2$ & T\ref{thm:dual_nta-nbg} &  $+$ & $+$ & $+$ \\
\hline
$6 \times 7$ & $21$ & $2$ & $2$ & $1.4$ & $1$ & R\ref{rmrk:false_ptrn} &  $+$ & $+$ & $+$ \\
\hline
$6 \times 8$ & $10$ & $4.8$ & $4.83...$ & $6.13...$ & $3.28...$ &  &  $+$ & $+$ & $+$ \\
\hline
$6 \times 8$ & $11$ & $4.36...$ & $4.41...$ & $5.46...$ & $2.92...$ &  &  $+$ & $?$ & $+$ \\
\hline
$6 \times 8$ & $12$ & $4$ & $4$ & $4.8$ & $2.57...$ & T\ref{thm:dual_nta-nbg} &  $+$ & $?$ & $?$ \\
\hline
$7 \times 7$ & $10$ & $4.9$ & $4.91...$ & $4.57...$ & $4.57...$ &  &  $+$ & $?$ & $?$ \\
\hline
$7 \times 7$ & $11$ & $4.45...$ & $4.51...$ & $4.09...$ & $4.09...$ &  &  $+$ & $+$ & $+$ \\
\hline
$7 \times 7$ & $12$ & $4.08...$ & $4.10...$ & $3.61...$ & $3.61...$ &  &  $+$ & $?$ & $?$ \\
\hline
\end{tabular}
\caption{Parameters for which no near triple arrays were found.
The cell in the column $\omega_*$ contains $+$ if we found an example of a generalized triple array on corresponding parameters with $\omega_* = 3$ and the other two from $\omega_{rc}, \omega_{rr}, \omega_{cc}$ equal to $2$.
If that is not the case, the cell contains $-$ if we confirmed that there are no such arrays via complete enumeration, and $?$ otherwise.}
\label{tbl:nonta}
\end{center}
\end{table}

\begin{remark}\label{rmrk:false_ptrn}
As can be seen in Table~\ref{tbl:nonta}, there are no near triple arrays for parameters $(3 \times 4, 6)$, $(4 \times 5, 10)$, $(5 \times 6, 15)$ and $(6 \times 7, 21)$, which leads to the suspicion that perhaps there are no $(r \times (r + 1), \binom{r + 1}{2})$-near triple arrays for every $r \geq 3$.
This turns out to be false, and we give examples of such arrays for $r = 7, 8$ in Figure~\ref{fig:false-ptrn}.
\end{remark}

\begin{figure}[!ht]
\begin{center}
    \begin{tabular}{c c c}
	\begin{tabular}{|*{8}{c}|}
    \hline
    0 & 1 & 2 & 3 & 4 & 5 & 6 & 7\\
    1 & 8 & 3 & 9 & 10 & 11 & 12 & 13\\
    9 & 4 & 14 & 15 & 16 & 17 & 18 & 19\\
    17 & 20 & 16 & 21 & 22 & 2 & 13 & 23\\
    23 & 24 & 25 & 26 & 6 & 8 & 15 & 10\\
    25 & 19 & 12 & 22 & 27 & 26 & 20 & 5\\
    27 & 21 & 24 & 7 & 11 & 18 & 0 & 14\\
    \hline
    \end{tabular}
	& &
    \begin{tabular}{|*{9}{c}|}
    \hline
    0 & 1 & 2 & 3 & 4 & 5 & 6 & 7 & 8\\
    1 & 2 & 9 & 10 & 11 & 12 & 13 & 14 & 15\\
    10 & 16 & 4 & 5 & 13 & 17 & 18 & 19 & 20\\
    17 & 21 & 22 & 23 & 24 & 6 & 25 & 11 & 26\\
    26 & 27 & 28 & 21 & 29 & 30 & 9 & 18 & 7\\
    28 & 31 & 23 & 32 & 30 & 14 & 8 & 33 & 16\\
    33 & 25 & 20 & 29 & 31 & 34 & 35 & 3 & 12\\
    35 & 19 & 34 & 15 & 0 & 27 & 32 & 22 & 24\\
    \hline
    \end{tabular}
    \end{tabular}
	\end{center}
    \caption{Examples of a $(7 \times 8, 28)$ and a $(8 \times 9, 36)$-near triple array.}
	\label{fig:false-ptrn}
\end{figure}

We discuss the question of non-existence in greater detail in Sections~\ref{sec:comp} and~\ref{sec:near_balance}.

When there are no $(r \times c, v)$-near triple arrays, a natural question is what can one get that is the closest thing to a near triple array.
To make this question more precise, we make the following definition.

\begin{definition}\label{def:GNTA}
A binary (near) equireplicate $r \times c$ row-column design on $v$ symbols in which, for some integers $x_{rc}$, $x_{rr}$, $x_{cc}$, $\omega_{rc}$, $\omega_{rr}$ and $\omega_{cc}$,
\begin{enumerate}
    \item any row and column have at least $x_{rc}$ and at most $x_{rc} + \omega_{rc}-1$ common symbols,
    \item any two rows have at least $x_{rr}$ and at most $x_{rr} + \omega_{rr}-1$ common symbols,
    \item any two columns have at least $x_{cc}$ and at most $x_{cc} + \omega_{cc}-1$ common symbols,
\end{enumerate}
is called an $(r \times c, v; \omega_{rc}, \omega_{rr}, \omega_{cc})$-generalized triple array.
\end{definition}

Note that the number of possible intersection sizes for pairs of rows and columns is then $\omega_{rc}$ (similarly for $\omega_{rr}$ and $\omega_{cc}$), and that $(r \times c, v)$-triple arrays are $(r \times c, v; 1,1,1)$-generalized triple arrays.
By Proposition~\ref{prop:altdefNTA}, $(r \times c, v)$-near triple arrays are $(r \times c, v; 2,2,2)$-generalized triple arrays.
In the cases where we could find no near triple arrays, we ran a modified version of the code to look for generalized triple arrays.
In every case considered, we were able to find examples of $(r \times c, v; \omega_{rc}, \omega_{rr}, \omega_{cc})$-generalized triple arrays with just one of $\omega_{rc}, \omega_{rr}, \omega_{cc}$ relaxed to $3$, and the other two equal to $2$.
A summary of these cases is given in the last three columns of Table~\ref{tbl:nonta}.

As a concrete example, consider the parameter set $(4 \times 11, 13)$, where $\lambda_{cc} < 1$.
Below we prove Theorem~\ref{thm:nonta_column}, which implies that there are no $(4 \times 11, 13)$-near triple arrays.
The proof of the theorem in fact shows something stronger, namely, that there is no binary near equireplicate $4 \times 11$ row-column design with any pair of columns having at most one symbol in common.
It follows that any $(4 \times 11, 13; \omega_{rc}, \omega_{rr}, \omega_{cc})$-generalized triple array must have $\omega_{cc} \geq 3$, and we give an example with $\omega_{rc} = \omega_{rr} = 2$ and $\omega_{cc} = 3$ in Figure~\ref{fig:GNTA41113}.

\begin{figure}[!ht]
\begin{center}
    \begin{tabular}{|*{11}{c}|}
    \hline
    0 & 1 & 2 & 3 & 4 & 5 & 6 & 7 & 8 & 9 & 10\\
    1 & 0 & 3 & 4 & 2 & 6 & 7 & 5 & 9 & 11 & 12\\
    2 & 3 & 0 & 5 & 6 & 1 & 8 & 11 & 12 & 10 & 4\\
    5 & 6 & 7 & 8 & 11 & 9 & 12 & 10 & 2 & 3 & 0\\
    \hline
    \end{tabular}
    \end{center}
    \caption{A $(4 \times 11, 13; 2,2,3)$-generalized triple array.}
	\label{fig:GNTA41113}
\end{figure}

In Figure~\ref{fig:GNTA479}, we give three further examples of generalized triple arrays for parameters where there exist no near triple arrays.
The first two examples illustrate that the range of permitted values of the intersection sizes, which of course must contain the average value, may in principle be skewed to either side of the average.
The third example shows that, perhaps counter-intuitively, sometimes one has to skew the center of the range of permitted values away from the average: for the parameter set $(4 \times 6, 8)$, the average $\lcc = 1.6$ is closer to $(1 + 3) / 2 = 2$ than to $(0 + 2) / 2 = 1$.
However, there are $(4 \times 6, 8; 2, 2, 3)$-generalized triple arrays with two columns sharing 0, 1, or 2 symbols but no such arrays with columns sharing 1, 2, or 3 symbols.

\begin{figure}[!ht]
\begin{center}
    \begin{tabular}{c c c c c}
    \begin{tabular}{|*{7}{c}|}
    \hline
    0 & 1 & 2 & 3 & 4 & 5 & 6\\
    1 & 0 & 3 & 2 & 5 & 7 & 8\\
    2 & 4 & 0 & 7 & 8 & 6 & 5\\
    8 & 7 & 6 & 4 & 3 & 0 & 1\\
    \hline
    \end{tabular}
    & &
    \begin{tabular}{|*{7}{c}|}
    \hline
    0 & 1 & 2 & 3 & 4 & 5 & 6\\
    1 & 2 & 0 & 4 & 5 & 7 & 8\\
    2 & 6 & 3 & 7 & 8 & 0 & 4\\
    3 & 7 & 8 & 5 & 2 & 6 & 1\\
    \hline
    \end{tabular}
    & &
    \begin{tabular}{|*{6}{c}|}
    \hline
    0 & 1 & 2 & 3 & 4 & 5\\
    1 & 0 & 3 & 2 & 6 & 7\\
    4 & 6 & 5 & 7 & 0 & 2\\
    7 & 5 & 6 & 4 & 3 & 1\\
    \hline
    \end{tabular}
    \end{tabular}
	\end{center}
    \caption{Two $(4 \times 7, 9; 2,2,3)$-generalized triple arrays with $\lambda_{cc} = 1.42...$, with two columns sharing 0, 1 or 2 symbols in the first, and 1, 2 or 3 symbols in the second array, and a $(4 \times 6, 8; 2, 2, 3)$-generalized triple array with $\lambda_{cc} = 1.6$ and two columns sharing 0, 1, or 2 symbols.}
	\label{fig:GNTA479}
\end{figure}

%-----------------------------------------------------------------
\subsection{Triple arrays}\label{ssec:res_ta}
%-----------------------------------------------------------------

Recall that $(r \times c, v)$-near triple arrays with all three $\lrc$, $\lrr$ and $\lcc$ being integer are precisely $(r \times c, v)$-triple arrays.
Since the intersection conditions imposed on an array in this special case are much more restrictive, our search works more efficiently.
Thanks to this, we have been able to complete the enumeration of triple arrays for some parameter sets beyond the range where we enumerated near triple arrays in general, namely, for $(7 \times 8, 14)$, $(6 \times 10, 15)$ and $(5 \times 16, 20)$.
We note that this is the first classification of triple arrays up to isotopism for these three parameters sets.
The generated arrays are available at~\cite{row_col_data}.

Regarding smaller parameter sets, it seems to be folklore knowledge that $(3 \times 4, 6)$-triple arrays do not exist, and that the $(4 \times 9, 12)$-triple array is unique up to isotopism.
Additionally, $(5 \times 6, 10)$-triple arrays have previously been enumerated by Phillips, Preece and Wallis in~\cite{phillipsSevenClassesTriple2005}, and, more recently, triple arrays for all three parameter sets mentioned were enumerated by J\"ager, Markstr\"om, Shcherbak and \"Ohman~\cite{jagerEnumerationRowColumnDesigns2024}.
Our results for these parameters also match what was previously known.
We present all results on triple arrays together in Table~\ref{tbl:triple_enum}.

The most symmetric example in this range is the $(6 \times 10, 15)$-triple array with autotopism group size $720$. This design was described by Nilson in \cite{Nilson_intercalates}, and it has the exceptional property that any pair of occurrences of a symbol lies in an \textit{intercalate}, that is, a $2 \times 2$ Latin subsquare.
Its autotopism group is isomorphic to the symmetric group $S_6$, and it acts on the design by permuting rows, that is, for every row permutation there is precisely one autotopism which permutes rows in this way.
We give two further rather symmetric examples in Figures~\ref{fig:ex120} and~\ref{fig:Fano}, both of which have similar properties.

\begin{figure}[!ht]
\begin{center}
    \begin{tabular}{|*{10}{c}|}
    \hline
    0 & 1 & 2 & 3 & 4 & 5 & 6 & 7 & 8 & 9\\
    1 & 0 & 3 & 4 & 5 & 2 & 10 & 11 & 12 & 13\\
    6 & 11 & 0 & 8 & 14 & 13 & 9 & 12 & 3 & 2\\
    8 & 13 & 14 & 11 & 0 & 6 & 5 & 4 & 7 & 10\\
    10 & 9 & 7 & 14 & 12 & 1 & 11 & 2 & 5 & 8\\
    12 & 7 & 10 & 1 & 9 & 14 & 3 & 6 & 13 & 4\\
    \hline
    \end{tabular}
\end{center}
\caption{The $(6 \times 10, 15)$-triple array with autotopism group size 120.}
\label{fig:ex120}
\end{figure}

In the $(6 \times 10, 15)$-triple array with autotopism group size 120, each cell belongs to exactly one intercalate, that is, 15 intercalates form a partition of the table, and the symbols can be grouped into triplets 
$\{0,1,14\},\{2,7,13\},\{3,8,10\},\{4,9,11\},\{5,6,12\}$ such
that all intercalates use only symbols from one triplet, and each pair of
symbols from a triplet forms a unique intercalate.
Each autotopism of the array permutes this set of five triplets, and for every permutation of triplets there is a unique autotopism permuting triplets in this way.
The autotopism group of the array is thus isomorphic to the permutation group $S_5$.

\begin{figure}[!ht]
\begin{center}
\begin{minipage}{0.45\textwidth}
	\begin{tabular}{|*{8}{c}|}
    \hline
    0 & 1 & 2 & 3 & 4 & 5 & 6 & 7\\
    1 & 8 & 3 & 9 & 5 & 10 & 7 & 11\\
    2 & 12 & 13 & 8 & 3 & 1 & 11 & 6\\
    7 & 5 & 9 & 2 & 6 & 12 & 13 & 10\\
    8 & 0 & 5 & 7 & 13 & 11 & 4 & 12\\
    10 & 6 & 11 & 4 & 8 & 2 & 9 & 0\\
    13 & 9 & 0 & 12 & 10 & 4 & 1 & 3\\
    \hline
    \end{tabular}
\end{minipage}
\begin{minipage}{0.45\textwidth}
    \begin{tikzpicture}
    \tikzset{every node/.style={draw,circle,thick,minimum size=1.25cm,fill=white}, every edge/.style={draw,thick}}
    \draw[thick] (0,0) circle (1.5cm);
    \node (v7) at (0,0) {\small $5,7$};
    \node (v1) at (90:3cm) {\small $0,4$};
    \node (v2) at (210:3cm) {\small $8,11$};
    \node (v4) at (330:3cm) {\small $12,13$};
    \node (v3) at (150:1.5cm) {\small $1,3$};
    \node (v6) at (270:1.5cm) {\small $9,10$};
    \node (v5) at (30:1.5cm) {\small $2,6$};
    \draw (v1) edge (v3)
               edge (v5)
               edge (v7)
          (v2) edge (v6)
               edge (v7)
               edge (v3)
          (v3) edge (v7)
          (v4) edge (v5)
               edge (v6)
               edge (v7)
          (v5) edge (v7)
          (v6) edge (v7);
    \end{tikzpicture}
\end{minipage}
\end{center}
\caption{The $(7 \times 8, 14)$-triple array with autotopism group size 168 and the corresponding Fano plane.}
\label{fig:Fano}
\end{figure}
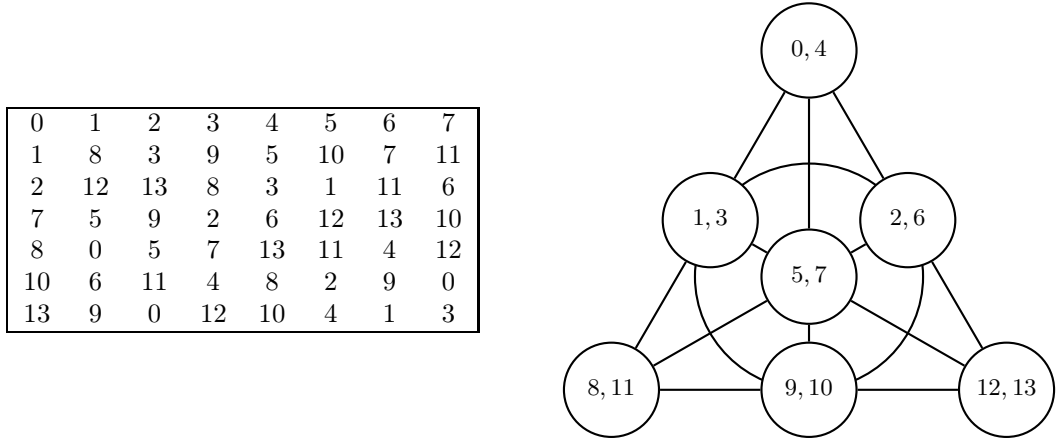

In the $(7 \times 8, 14)$-triple array with autotopism group size 168, any two occurrences of a single symbol lie in a $3 \times 2$ Latin subrectangle, and any pair of distinct symbols that appear together in some column likewise appear in one common $3 \times 2$ Latin subrectangle. 
That is, every pair of symbols except $(0,4),(1,3),(2,6),(5,7),(8,11),(9,10),(12,13)$ appear together in a $3\times 2$ Latin rectangle.
The autotopism group of the array is isomorphic to the automorphism group of the Fano plane, $\mathrm{PSL}(3, 2)$.
The action of the autotopism group on the array can be described using the labelled Fano plane given in Figure~\ref{fig:Fano} on the right.
The vertex labels are the seven pairs of symbols mentioned above, and each of the seven edges (six lines and the circle in the illustration) corresponds to four of the $3 \times 2$ Latin subrectangles using one symbol from each vertex in the edge.
The group isomorphism between the autotopism group of the array and $\mathrm{PSL}(3, 2)$ is given by the action of autotopisms on the pairs of symbols corresponding to the labels on the vertices of the Fano plane.

%-----------------------------------------------------------------
\subsection{Near Youden rectangles}\label{ssec:res_nyr}
%-----------------------------------------------------------------

Recall that for $r \leq v$, $(r \times v, v)$-near triple arrays are precisely $r \times v$ (near) Youden rectangles.
In~\cite{jagerSmallYoudenRectangles2023}, J\"ager, Markstr\"om, Shcherbak and \"Ohman enumerated (near) Youden rectangles for a variety of small parameter sets.
We replicated these results, including counts for specific autotopism group sizes, in all cases considered in~\cite{jagerSmallYoudenRectangles2023} except $5 \times 21$ Youden rectangles, which was too computationally demanding for our program.
In addition, we have enumerated $3 \times 14$, $3 \times 15$, $4 \times 14$ and $5 \times 12$ near Youden rectangles.
We present these new results in Table~\ref{tbl:nyr_enum}.
With some exceptions due to size restrictions, the data we generated is available at~\cite{near_youden_data}.

%-----------------------------------------------------------------
\subsection{Other row-column designs}\label{ssec:res_int}
%-----------------------------------------------------------------

Consider a binary equireplicate $r \times c$ row-column design on $v$ symbols and the following intersection conditions, where $\lrc$, $\lrr$ and $\lcc$ are from Lemma~\ref{lm:equiavg}:
\begin{enumerate}
    \customitem{(RC)}\label{RCD:rc} any row and column have $\lrc$ common symbols,
	\customitem{(RR)}\label{RCD:rr} any two rows have $\lrr$ common symbols,
	\customitem{(CC)}\label{RCD:cc} any two columns have $\lcc$ common symbols.
\end{enumerate}
Following the nomenclature in~\cite{jagerEnumerationRowColumnDesigns2024}, a binary equireplicate row-column design is called
\begin{itemize}
    \item a triple array if all three conditions~\ref{RCD:rc},~\ref{RCD:rr},~\ref{RCD:cc} hold;
    \item a \textit{double array} if~\ref{RCD:rr} and~\ref{RCD:cc} hold;
    \item a \textit{sesqui array} if~\ref{RCD:rc} and~\ref{RCD:rr} hold;
    \item a \textit{transposed mono array} if~\ref{RCD:rr} holds;
    \item an \textit{adjusted orthogonal array}, or an \textit{AO-array}, if~\ref{RCD:rc} holds.
\end{itemize}
Double arrays have historically been studied alongside triple arrays, for example, see~\cite{mcsorleyDoubleArraysTriple2005a}.
Sesqui arrays were introduced by Bailey, Cameron and Nilson~\cite{baileySesquiarraysGeneralisationTriple2018}, and
transposed mono arrays and AO-arrays were introduced more recently in~\cite{jagerEnumerationRowColumnDesigns2024}.

Clearly, any double array is also a transposed mono array, any sesqui array is also a transposed mono array and an AO-array, and any triple array belongs to all the other four classes of designs.
In each case the design is called \textit{proper} if those conditions~\ref{RCD:rc},~\ref{RCD:rr},~\ref{RCD:cc} not in the definition of the design are explicitly forbidden to hold.
For example, a transposed mono array is proper if it does not satisfy conditions~\ref{RCD:rc} and~\ref{RCD:cc}, or, equivalently, if it is not a sesqui array and not a double array.

Conditions~\ref{RCD:rc},~\ref{RCD:rr},~\ref{RCD:cc} can clearly only hold when, respectively, $\lrc$, $\lrr$, $\lcc$ are integers.
These conditions, together with the property of being equireplicate, restrict which parameters $(r \times c, v)$ are \textit{admissible} for all five mentioned classes of designs.
We note that any $(r \times c, v)$-near triple array on a parameter set that is admissible for one of these classes will actually belong to the class.

J\"ager, Markstr\"om, Shcherbak and \"Ohman~\cite{jagerEnumerationRowColumnDesigns2024} enumerated these types of designs for small parameter sets.
We independently replicated their results using a modified version of our program.
Note that our approach to enumeration is very different from theirs.
We further attempted to enumerate these types of designs in the same range of parameters we considered for near triple arrays and obtained complete enumeration in several new cases.
With some exceptions due to size restrictions, the data we generated is available at~\cite{row_col_data}.
We present these results in Table~\ref{tbl:RCDupto14} and Table~\ref{tbl:RCDfrom15}.
Table~\ref{tbl:RCDupto14} contains results for all non-trivial admissible parameter sets for binary equireplicate row-column designs on $v \leq 14$ symbols.
Table~\ref{tbl:RCDfrom15} contains results for some parameter sets with $v > 15$ which we previously considered for near triple arrays.
Note that the counts are given for proper designs of each type.
For comparison, we also give counts of near triple arrays for the same parameter sets.
Notably, in all cases considered, except of course for those corresponding to triple arrays, near triple arrays form a proper subset of every other class.
In several cases listed in Table~\ref{tbl:RCDfrom15}, we established existence of proper designs even though the complete enumeration has not been finished.
In each such case, this was done by the pigeonhole principle: for example, even though the search for $4 \times 12$ transposed mono arrays on $24$ symbols has not been completed, the number of non-isotopic arrays found already exceeded the total number of $4 \times 12$ sesqui arrays on $24$ symbols.

%-----------------------------------------------------------------
\section{Constructions}\label{sec:exist}
%-----------------------------------------------------------------

Throughout this section, we use the identities for parameters $\lrc$, $\lrr$, $\lcc$ from Lemma~\ref{lm:avg} without referring to them explicitly.

%-----------------------------------------------------------------
\subsection{Constructions with at most one common symbol between two columns}\label{ssec:ex_lcc}
%-----------------------------------------------------------------

We start with a lemma relating large $v$ to small $\lrc$ and $\lcc$.

\begin{lemma}\label{lm:smalllambda}
    Let $e, \lrc, \lrr, \lcc$ be the parameters of an $(r \times c, v)$-near triple array.
    Then the following are equivalent:
\begin{enumerate}[(a)]
    \item\label{sml:param} $e \leq 2$, $\lrc \leq 2$ and $\lcc \leq 1$;
    \item\label{sml:v} $v \geq rc - c \cdot \min(r, c - 1) / 2$.
\end{enumerate}
\end{lemma}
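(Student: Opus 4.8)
The plan is to regard every parameter as an explicit function of the single quantity $e = rc/v$ via Lemma~\ref{lm:avg}, and to rewrite both (a) and (b) as inequalities on $e$, then check they coincide. Since $\max(r,c) \le v \le rc$, we have $1 \le e \le \min(r,c)$, so $e$ ranges over a bounded interval. First I would record the closed forms of the parameters in the relevant range $e \in [1,2]$: substituting $e^- = 1$ and $e^+ = 2$ into Lemma~\ref{lm:avg}\ref{avg:lrc} gives $\lrc = 3 - 2/e$, and then Lemma~\ref{lm:avg}\ref{avg:lcc} gives
\[
\lcc = \frac{r(\lrc - 1)}{c-1} = \frac{2r(e-1)}{e(c-1)}.
\]
These identities hold throughout $[1,2]$, including at the integer endpoints $e=1,2$.

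Next I would dispose of the two requirements $e \le 2$ and $\lrc \le 2$ appearing in (a). By Remark~\ref{rem:elrc} we have $\lrc \ge e$, so for $e > 2$ already $\lrc \ge e > 2$, while for $e \le 2$ the formula $\lrc = 3 - 2/e$ is increasing and at most $2$; hence $\lrc \le 2 \iff e \le 2$, and condition (a) reduces to ``$e \le 2$ and $\lcc \le 1$''. Using the closed form above, clearing denominators turns $\lcc \le 1$ into
\[
e\,(2r - c + 1) \le 2r .
\]
On the other side, writing $m = \min(r, c-1)$ and using $e = rc/v$, the inequality $v \ge rc - cm/2$ defining (b) is equivalent to $e \le \tfrac{2r}{2r - m}$. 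Note in particular that this bound never exceeds $2$, so (b) also forces $e \le 2$; thus both conditions live in the range $e \in [1,2]$, and for $e > 2$ both fail.

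Finally I would match the two inequalities by splitting on the shape of the array. If $c \ge r+1$, then $m = r$, so (b) becomes $e \le 2$; and for $e \le 2$ the closed form gives $\lcc = \tfrac{2r}{c-1}\bigl(1 - \tfrac1e\bigr) \le \tfrac{r}{c-1} \le 1$ automatically, since $c-1 \ge r$. Hence (a) also reduces to $e \le 2$, and (a) $\iff$ (b). If instead $c \le r$, then $m = c-1$, so (b) becomes $e \le \tfrac{2r}{2r-c+1}$; here $2r-c+1 \ge r+1 > 0$ and one checks $\tfrac{2r}{2r-c+1} < 2$ (equivalently $c \le r$), so the constraint $e(2r-c+1)\le 2r$ is genuinely binding and, together with $e \le 2$, is equivalent to $e \le \tfrac{2r}{2r-c+1}$, again matching (b).

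The main obstacle will be the bookkeeping around the factor $2r-c+1$ and identifying which constraint is binding in each regime: the content of the lemma is precisely that in the ``wide'' case $c \ge r+1$ the replication bound $e\le 2$ dominates while $\lcc \le 1$ is free, whereas in the ``tall'' case $c \le r$ the column-intersection bound dominates and pins $e$ strictly below $2$. The two elementary equivalences $\tfrac{2r}{2r-c+1} < 2 \iff c \le r$ and $\tfrac{r}{c-1} \le 1 \iff c \ge r+1$ make the case split clean, and everything else is routine algebra on the formulas of Lemma~\ref{lm:avg}.
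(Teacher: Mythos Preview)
Your proof is correct and follows essentially the same approach as the paper: both arguments split on whether $c > r$ or $c \le r$, derive the closed form $\lcc = 2r(e-1)/(e(c-1))$ on the range $e \in [1,2]$, and then verify the equivalence case by case. Your version is organized a bit more uniformly (translating everything into inequalities on $e$ before comparing), which has the minor advantage of making the case $e > 2$ explicit where the paper leaves it implicit, but the mathematical content is the same.
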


\begin{proof}
If $c > r$, then \ref{sml:v} turns into $v \geq rc / 2$, which is equivalent to $e\leq 2$.
This in turn implies the other inequalities of \ref{sml:param}: if $e \leq 2$, then $\lrc \leq 2$ and $\lcc = r(\lrc - 1) / (c - 1) \leq r / (c - 1) \leq 1$.

If $c \leq r$, then $\lcc = r(\lrc - 1) / (c - 1) \leq 1$ implies $\lrc \leq 1 + (c - 1) / r < 2$ and thus also $e < 2$.
Both items of the lemma hold trivially when $e = 1$, so we may assume $e^- = 1$ and $e^+ = 2$.
Then
\[
\lrc = e^- + e^+ - \frac{e^-e^+}{e} = 3 - \frac{2v}{rc}, \text{ so } \lcc = \frac{r(\lrc - 1)}{c - 1} = \frac{r}{c - 1} \left( 2 - \frac{2v}{rc} \right) =  \frac{2(rc - v)}{c(c - 1)}.
\]
Then $\lcc \leq 1$ is equivalent to $v \geq rc - c(c - 1) / 2$, i.e. to \ref{sml:v}.
\end{proof}

Given a near triple array satisfying the conditions of Lemma~\ref{lm:smalllambda}, we can construct new near triple arrays from it by either replacing some of the repeating symbols with new ones, or by adding a new column.

\begin{lemma}\label{lm:incv}
Let $T$ be an $(r \times c, v)$-near triple array with $v \geq rc - c \cdot \min(r, c - 1) / 2$.
For each $0 < i \leq rc - v$, there exists an $(r \times c, v + i)$-near triple array.
\end{lemma}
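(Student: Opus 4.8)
The plan is to reduce to the single-symbol case and iterate. Note first that the hypothesis $v \geq rc - c\cdot\min(r,c-1)/2$ is inherited by $v+1$, so it suffices to construct one $(r \times c, v+1)$-near triple array from $T$ whenever $v < rc$; applying this construction repeatedly then produces arrays on $v+1, v+2, \dots, rc$ symbols, which is exactly the range $0 < i \leq rc - v$ (the statement being vacuous when $v = rc$). By Lemma~\ref{lm:smalllambda} the hypothesis is equivalent to $e \leq 2$, $\lrc \leq 2$ and $\lcc \leq 1$, and since $v < rc$ forces $e > 1$, every symbol occurs once or twice; by Lemma~\ref{lm:k-k+} there are exactly $rc - v \geq 1$ doubled symbols.

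The operation I would use takes a doubled symbol $s$, occurring at cells $(i_1, j_1)$ and $(i_2, j_2)$ (with $i_1 \neq i_2$ and $j_1 \neq j_2$ by binarity), and replaces the occurrence at $(i_1, j_1)$ by a fresh symbol $s'$ appearing nowhere else. The result is a binary array on $v+1$ symbols in which $s$ and $s'$ each occur once, so all replication numbers stay in $\{1,2\}$, matching the admissible range when $1 < e' < 2$. The key bookkeeping is to track the three families of intersections. A doubled symbol contributes to a row--row (resp. column--column) intersection only through its unique pair of rows (resp. columns), so the operation lowers $|R_{i_1} \cap R_{i_2}|$ and $|C_{j_1} \cap C_{j_2}|$ each by exactly $1$ and fixes all other such intersections. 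For row--column intersections, a short argument using binarity together with $\lrc \leq 2$ shows that the two ``off-corner'' pairs $|R_{i_1} \cap C_{j_2}|$ and $|R_{i_2} \cap C_{j_1}|$ each equal exactly $2$ beforehand (each contains $s$ alongside the distinct symbol filling its own cell), and the operation lowers precisely these two by $1$, to $1$, leaving every other row--column intersection unchanged.

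It remains to check that the new values lie in the permitted ranges for $e', \lrc', \lrr', \lcc'$, which by Lemma~\ref{lm:avg} satisfy $e' < e$, $\lrc' < \lrc$, $\lrr' < \lrr$ and $\lcc' < \lcc$. The column--column values stay in $\{0,1\}$ and the row--column values stay in $\{1,2\}$; since $\lcc' < 1$ and $1 < \lrc' < 2$ when $v+1 < rc$, these are exactly the admissible ranges, so both conditions survive for \emph{any} choice of $s$. The only genuine constraint comes from the row--row condition, and here I would choose $s$ so that its row pair $(i_1, i_2)$ currently realizes the maximum row--row intersection $\lrr^+$. Such a pair exists whenever $\lrr$ is not an integer, and it carries a doubled symbol since $\lrr^+ \geq 1$ (if $\lrr$ is an integer, any $s$ works). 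Using $\binom{r}{2}\lrr = rc - v$ and $\binom{r}{2}\lrr' = rc - (v+1)$, one checks that the average drops by exactly $1/\binom{r}{2}$, so $\lrr^-$ either stays put, leaving $\{\lrr^-, \lrr^+\}$ valid, or drops by one, which happens precisely when a unique pair sits at $\lrr^+$; decrementing that pair then lands all row--row values in $\{\lfloor\lrr'\rfloor, \lceil\lrr'\rceil\}$.

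The main obstacle is exactly this coordination: all three conditions must be preserved by a single choice of $s$. The construction works because the column--column and row--column conditions impose no restriction, their decremented values automatically landing in the shifted admissible ranges, leaving one free to choose $s$ solely to satisfy the row--row condition. Verifying that a maximal row pair always carries a doubled symbol, and that one decrement keeps the row--row values balanced as the target range shifts, are the two points requiring care. The boundary case $v+1 = rc$, where the result is the all-distinct array with every row--column intersection equal to $1$ and every row--row and column--column intersection equal to $0$, must be handled separately but is immediate.
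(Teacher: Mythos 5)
Your proposal is correct and takes essentially the same route as the paper: both replace one occurrence of a doubled symbol chosen from a row pair realizing $\lrr^+$ by a fresh symbol and iterate, the only difference being that the paper shortcuts your window-tracking by invoking Proposition~\ref{prop:altdefNTA} with $x = x_{rc} = 1$, $x_{rr} = \lrr^+ - 1$, $x_{cc} = 0$, so the new averages never need to be computed. One harmless slip in your bookkeeping: the floor of the row--row window drops by one precisely when $\lrr$ is an integer, whereas a \emph{unique} pair at $\lrr^+$ (with $\lrr \notin \Z$) instead makes the new average equal to the integer $\lrr^-$ and the window degenerate to a single value --- but your stated conclusion that the decremented values land in $\{\lfloor\lrr'\rfloor, \lceil\lrr'\rceil\}$ holds in every case, so the argument stands.
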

\begin{proof}
Let $e, \lrc, \lrr, \lcc$ be the parameters of $T$.
By Lemma~\ref{lm:smalllambda}, we have $e, \lrc \leq 2$ and $\lcc \leq 1$.
Choose two rows of $T$ that have $\lrr^+$ symbols in common, and let $a$ be one of their common symbols.
Since $e \leq 2$, $a$ doesn't appear in other rows of $T$.
Let $T'$ be an $r \times c$ array on $v + 1$ symbols obtained from $T$ by replacing one of the occurrences of $a$ with a new unique symbol.
Applying Proposition~\ref{prop:altdefNTA} with $x, x_{rc} = 1$, $x_{rr} = \lrr^+ - 1$ and $x_{cc} = 0$, we see that $T'$ is an $(r \times c, v + 1)$-near triple array.
Repeating this operation $i$ times, we get an $(r \times c, v + i)$-near triple array.
\end{proof}

\begin{lemma}\label{lm:addclm}
Let $T$ be an $(r \times c, v)$-near triple array with $v \geq rc - c \cdot \min(r, c - 1) / 2$.
Let $T'$ be an $r \times (c + 1)$ array on $v + r$ symbols obtained from $T$ by adding a new column with $r$ new symbols in it.
Then $T'$ is an $(r \times (c + 1), v + r)$-near triple array.
\end{lemma}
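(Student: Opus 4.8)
The plan is to verify directly that the enlarged array $T'$ satisfies the four conditions of Proposition~\ref{prop:altdefNTA} for a suitable choice of integer parameters, so that membership in the class follows immediately. First I would invoke Lemma~\ref{lm:smalllambda} to rewrite the hypothesis on $v$ in the usable form $e \le 2$, $\lrc \le 2$, $\lcc \le 1$. Since $1 \le e \le 2$ and, by Lemma~\ref{lm:avg}\ref{avg:lrc+-}, $\lrc^- = e^-$ and $\lrc^+ = e^+$, this says precisely that in $T$ every symbol occurs once or twice, every row–column intersection is $1$ or $2$, and every pair of columns shares $0$ or $1$ symbols. These three facts are exactly what drives the construction, so I would record them at the outset.

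Next I would track how each of the four intersection statistics changes when the column $C_{c+1}$, filled with $r$ distinct brand-new symbols, is appended; the book-keeping is short because these new symbols are pairwise distinct and occur nowhere in the old part of the array. Writing $R'_i, C'_j$ for the rows and columns of $T'$, one checks: (i) each new symbol occurs exactly once and each old symbol keeps its old multiplicity, so every symbol of $T'$ occurs once or twice; (ii) for an old column $j \le c$ the extra symbol of $R'_i$ lies outside $C'_j$, whence $R'_i \cap C'_j = R_i \cap C_j \in \{1,2\}$, while $R'_i$ shares with $C'_{c+1}$ exactly the single new symbol placed in its cell of that column, giving $|R'_i \cap C'_{c+1}| = 1$; (iii) the new symbols added to $R'_i$ and to $R'_j$ are distinct and lie in no old row, so $R'_i \cap R'_j = R_i \cap R_j \in \{\lrr^-, \lrr^+\}$; and (iv) old pairs of columns are unchanged while $C'_{c+1}$ is disjoint from every old column, so every pair of columns of $T'$ shares $0$ or $1$ symbols.

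Finally I would apply Proposition~\ref{prop:altdefNTA} with $x = 1$, $x_{rc} = 1$, $x_{rr} = \lrr^-$ and $x_{cc} = 0$. Items (i)--(iv) say exactly that replication numbers lie in $\{1,2\}$, row–column intersections in $\{1,2\}$, row–row intersections in $\{\lrr^-, \lrr^- + 1\}$ and column–column intersections in $\{0,1\}$; since $T'$ is visibly binary (the new column has $r$ distinct symbols, each used nowhere else), the proposition identifies $T'$ as an $(r \times (c+1), v+r)$-near triple array.

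I do not expect a genuine obstacle here, as every step is a direct verification; the only point requiring care is the column–column condition, where the hypothesis $\lcc \le 1$ is essential. The appended column is disjoint from all old columns and so contributes intersection $0$, and this stays within two consecutive admissible values only because the old column–column intersections were already confined to $\{0,1\}$. Had $\lcc$ exceeded $1$, the very same construction would produce at least three distinct column–column intersection sizes and the argument would break down.
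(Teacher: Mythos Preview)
Your proposal is correct and follows essentially the same approach as the paper: invoke Lemma~\ref{lm:smalllambda} to get $e,\lrc\le 2$ and $\lcc\le 1$, then apply Proposition~\ref{prop:altdefNTA} with $x=x_{rc}=1$, $x_{rr}=\lrr^-$, $x_{cc}=0$. The paper's proof is terser, leaving the four verifications implicit, while you spell them out; nothing substantively differs.
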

\begin{proof}
Let $e, \lrc, \lrr, \lcc$ be the parameters of $T$.
Due to Lemma~\ref{lm:smalllambda}, we have $e, \lrc \leq 2$ and $\lcc \leq 1$.
Applying Proposition~\ref{prop:altdefNTA} with $x, x_{rc} = 1$, $x_{rr} = \lrr^-$ and $x_{cc} = 0$, we see that $T'$ is a $(r \times (c + 1), v + r)$-near triple array.
\end{proof}

In the tables of Appendix~\ref{ap:nta_tables}, columns share more and more values at the bottom, the further to the right one looks.
For example, the last 8 numbers in the columns corresponding to $3 \times 14$ and $3 \times 15$ near triple arrays in Table~\ref{tbl:nta3} coincide.
In the next corollary we provide an explanation for this phenomenon.

\begin{corollary}
Let $\mathcal{N}(r \times c, v)$ be the number of isotopism classes of $(r \times c, v)$-near triple arrays.
\begin{enumerate}[(a)]
\item\label{tail:ineq} If $v \geq rc - c \cdot \min(r, c - 1) / 2$, then $\mathcal{N}(r \times c, v) \leq \mathcal{N}(r \times (c + 1), v + r)$.
\item\label{tail:eq} If $v \geq rc - c / 2$, then $\mathcal{N}(r \times c, v) = \mathcal{N}(r \times (c + 1), v + r)$.
\end{enumerate}
\end{corollary}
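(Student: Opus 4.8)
The plan is to realize both parts through the column-adding operation of Lemma~\ref{lm:addclm}. Write $\Phi$ for the map sending an $(r \times c, v)$-near triple array $T$ to the $(r \times (c+1), v+r)$-near triple array obtained by appending a column of $r$ brand-new symbols. Since appending fresh symbols commutes with relabelling, $\Phi$ is well defined on isotopism classes, giving a map from the set $\mathcal{A}$ of $(r \times c, v)$-classes to the set $\mathcal{B}$ of $(r \times (c+1), v+r)$-classes. The whole statement reduces to understanding the injectivity and image of $\Phi$, and the structural feature to track throughout is an \emph{all-singleton column}: a column all of whose symbols occur only once in the entire array. Note that the hypothesis of part~\ref{tail:eq}, $v \geq rc - c/2$, implies that of part~\ref{tail:ineq} because $\min(r, c-1) \geq 1$, so Lemma~\ref{lm:smalllambda} and Lemma~\ref{lm:addclm} apply in both settings.

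For part~\ref{tail:ineq} I would prove that $\Phi$ is injective, which immediately gives $\mathcal{N}(r \times c, v) = |\mathcal{A}| \leq |\mathcal{B}| = \mathcal{N}(r \times (c+1), v+r)$. The natural inverse of $\Phi$ is ``delete an all-singleton column'', and the crux is that this is well defined independently of which all-singleton column is chosen. I would therefore establish the following exchange lemma: if a binary array has two all-singleton columns $j_0$ and $j_1$, then deleting $j_0$ and deleting $j_1$ yield isotopic arrays. The proof is a short combinatorial argument, since the symbols of these two columns appear nowhere else; swapping them by a symbol permutation and then permuting columns carries one deletion onto the other. Granting this, if $\Phi(T_1) \cong \Phi(T_2)$ then, choosing an isotopism between them and observing that it must carry all-singleton columns to all-singleton columns, I recover $T_1$ and $T_2$ as deletions of all-singleton columns from one and the same array, and the exchange lemma forces $T_1 \cong T_2$. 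I expect this well-definedness of the deletion to be the main obstacle: a priori two distinct all-singleton columns could delete to non-isotopic arrays, and the exchange lemma is precisely what rules this out.

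For part~\ref{tail:eq} I would additionally show that $\Phi$ is surjective, whence it is a bijection and the two counts coincide. Two ingredients are needed. First, every $(r \times (c+1), v+r)$-near triple array has an all-singleton column: the number of symbols occurring more than once equals (number of cells) minus (number of symbols), that is $rc - v \leq c/2$ by the hypothesis, and by Lemma~\ref{lm:smalllambda} we have $e \leq 2$ so each such symbol occurs exactly twice and meets at most two columns; hence at most $2(rc - v) \leq c < c+1$ columns contain a repeated symbol, leaving at least one all-singleton column. Second, deleting an all-singleton column returns an $(r \times c, v)$-near triple array: removing the column only deletes the singleton occupying each row, and that singleton lies in no other column and in no other row, so every surviving row-row, row-column and column-column intersection size is literally unchanged. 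Proposition~\ref{prop:altdefNTA} then applies with the same window parameters, and since the original array is isotopic to the image under $\Phi$ of this deletion, $\Phi$ is onto. Combined with the injectivity from part~\ref{tail:ineq}, this completes the proof.
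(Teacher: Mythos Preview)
Your proof is correct and follows essentially the same approach as the paper's: append an all-singleton column (Lemma~\ref{lm:addclm}) to get the injection for~\ref{tail:ineq}, and for~\ref{tail:eq} use the pigeonhole count $2(rc-v)\le c<c+1$ to locate an all-singleton column whose deletion inverts the construction. The paper's own proof is considerably terser, asserting only that the adding and deleting operations ``preserve isotopism classes''; your explicit exchange lemma and verification that intersection sizes survive deletion simply spell out what the paper leaves implicit.
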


\begin{proof}
The operation of adding a column from Lemma~\ref{lm:addclm} preserves isotopism classes, so \ref{tail:ineq} holds.

If $v \geq rc - c / 2$, then any $(r \times (c + 1), v + r)$-near triple array $T'$ has at most
\[
    2 \cdot ( r(c + 1) - (v + r) ) \leq c < c + 1
\]
cells filled with symbols appearing more than once in the array, so, by the pigeonhole principle, $T'$ contains a column filled with unique symbols.
By deleting this column, we get an $(r \times c, v)$-near triple array, and since this operation also preserves isotopism classes, \ref{tail:eq} follows.
\end{proof}

For $v \geq rc-c/2$, we can show existence of near triple arrays by
an explicit construction, given in the proof of the following lemma.
The basic idea is that there are so few repeated symbols that the only complication is to distribute them in such a way that row-row intersections are balanced.

\begin{lemma}\label{lm:ex_tail}
There exists an $(r \times c, v)$-near triple array whenever $v \geq rc - c / 2$.
\end{lemma}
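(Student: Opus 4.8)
The plan is to give an explicit construction. First I would dispose of the parameters. If $v = rc$ then $e = 1$ and the array in which every cell carries a distinct symbol is already a triple array, so from now on assume $v < rc$. Since $v \geq rc - c/2 > rc/2$ (using $r \geq 2$), we get $1 < e = rc/v < 2$, hence $e^- = 1$ and $e^+ = 2$. By Lemma~\ref{lm:k-k+} there are then exactly $m := v_+ = v(e - 1) = rc - v$ symbols occurring twice and $v - m$ symbols occurring once, and the hypothesis reads $m \leq c/2$. Using Lemma~\ref{lm:avg} (or a direct count, noting that each doubly-occurring symbol contributes exactly $1$ to a single row pair, a single column pair, and nothing elsewhere) one finds $\lrc = 3 - 2v/(rc) \in (1,2)$, so $\lrc^- = 1$ and $\lrc^+ = 2$; next $\lcc = 2m/(c(c-1))$, and since $1 \leq m \leq c/2$ one has $0 < \lcc < 1$, so $\lcc^- = 0$ and $\lcc^+ = 1$; and finally $\lrr = m/\binom{r}{2}$.

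Second, I would record how the three intersection sizes are governed by the placement of the repeated symbols. Put the two occurrences of a repeated symbol at cells $(a,b)$ and $(a',b')$; these lie in distinct rows and distinct columns, so I view the symbol both as an edge $\{a,a'\}$ on the $r$ rows and as an edge $\{b,b'\}$ on the $c$ columns. Then $|R_i \cap R_j|$ equals the multiplicity of $\{i,j\}$ in the resulting row-multigraph, $|C_i \cap C_j|$ equals the multiplicity of $\{i,j\}$ in the column-multigraph, and $|R_i \cap C_j|$ equals $1$ (from the symbol in cell $(i,j)$) plus the number of repeated symbols having $(i,j)$ as one of the two opposite corners $(a,b')$, $(a',b)$ of the rectangle they span. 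Thus the adjusted-orthogonality condition $|R_i \cap C_j| \leq 2$ amounts to asking that no cell be an opposite corner of two distinct repeated symbols, while the column condition $|C_i \cap C_j| \leq 1$ asks that no two repeated symbols occupy the same pair of columns.

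Third, I would place the $m$ repeated symbols into private, disjoint pairs of columns: for $k = 1, \dots, m$ put the two occurrences of the $k$-th repeated symbol at $(a_k, 2k-1)$ and $(b_k, 2k)$, where the row pairs $\{a_k, b_k\}$ are chosen by cycling through the $\binom{r}{2}$ edges of the complete graph on the rows so that their multiplicities differ by at most one. This uses columns $1, \dots, 2m$, which is possible since $2m \leq c$, and fills the remaining $rc - 2m = 2v - rc$ cells with distinct new symbols; the array is automatically binary. Because each repeated symbol owns its own pair of columns, distinct repeated symbols never share a column pair (so every column-column intersection is $0$ or $1$) and their opposite corners lie in disjoint columns (so every row-column intersection is $1$ or $2$); both conditions hold for free. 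The balanced cycling assigns each row pair $\lrr^-$ or $\lrr^+$ repeated symbols, so every row-row intersection equals $\lrr^-$ or $\lrr^+$. Applying Proposition~\ref{prop:altdefNTA} with $x = 1$, $x_{rc} = 1$, $x_{rr} = \lrr^-$ and $x_{cc} = 0$ then certifies that the array is an $(r \times c, v)$-near triple array.

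The construction is direct, and I expect the only genuinely delicate point to be the bookkeeping behind the adjusted-orthogonality condition: one must justify the opposite-corner description of $|R_i \cap C_j|$ and check that confining each repeated symbol to a private pair of columns really does keep every such intersection at most $2$. Everything else — the column condition and the row condition — then follows from the disjointness of the column pairs together with the elementary fact that $m$ edges can always be spread over $\binom{r}{2}$ slots with multiplicities differing by at most one. The hypothesis $v \geq rc - c/2$ is used exactly once, and only to guarantee $2m \leq c$ so that the private column pairs fit.
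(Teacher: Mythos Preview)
Your proposal is correct and follows essentially the same construction as the paper: place the $m = rc - v$ doubly-occurring symbols into disjoint column pairs $\{2k-1, 2k\}$ with the row pairs cycling evenly through $\binom{[r]}{2}$, fill the remaining cells with fresh symbols, and invoke Proposition~\ref{prop:altdefNTA}. Your write-up is more detailed than the paper's (in particular the opposite-corner description of $|R_i \cap C_j|$ is a nice way to justify the row--column bound, which the paper simply asserts), but the idea and the construction are the same.
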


\begin{proof}
An $(r \times c, v)$-near triple array $T$ must have $t := rc - v \leq c / 2$ symbols occurring two times, and all other symbols occurring just once.
Let
\[
\left\{ P_1, \dots, P_{\binom{r}{2}} \right\} = \binom{[r]}{2} = \{ (i, j)\ :\ 1 \leq i < j \leq r\};\quad P_i := P_{i - \binom{r}{2}} \text{ for } i > \binom{r}{2}.
\]
For $1 \leq i \leq t$ put symbol $i$ into the two cells $(j_1, 2i - 1)$ and 
$(j_2, 2i)$ of $T$, where $(j_1, j_2) = P_i$, and put the remaining unique symbols into the remaining cells in an arbitrary way.
The resulting array is clearly binary, so by Proposition~\ref{prop:altdefNTA} applied with $x, x_{rc} = 1$, $x_{rr} = \left\lfloor\frac{2t}{r(r - 1)}\right\rfloor$ and $x_{cc} = 0$, $T$ is a near triple array.
\end{proof}

Another type of construction of near triple arrays is juxtaposing
two near triple arrays with suitable parameters, as in the following lemma.

\begin{lemma}\label{lm:concat}
    Let $T_1$ and $T_2$ be an $(r \times c_1, v_1)$ and an $(r \times c_2, v_2)$-near triple arrays on disjoint sets of symbols with parameters $e_1, \lrr^1, \lcc^1, \lrc^1$ and $e_2, \lrr^2, \lcc^2, \lrc^2$, respectively.
    Let
    \begin{enumerate}
        \item $\max(e_1^+, e_2^+) - \min(e_1^-, e_2^-) \leq 1$,
        \item $\lrr^1 \in \Z$ or $\lrr^2 \in \Z$,
        \item $\lcc^1, \lcc^2 \leq 1$,
    \end{enumerate}
    and let $T$ be a concatenation of $T_1$ and $T_2$ (each row of $T$ is a concatenation of two corresponding rows of $T_1$ and $T_2$).
    Then $T$ is an $(r \times (c_1 + c_2), v_1 + v_2)$-near triple array.
\end{lemma}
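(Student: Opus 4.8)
The plan is to verify the four numerical conditions of Proposition~\ref{prop:altdefNTA} directly for $T$, exhibiting suitable integers $x$, $x_{rc}$, $x_{rr}$, $x_{cc}$. First observe that $T$ is binary: both $T_1$ and $T_2$ are binary and use disjoint symbol sets, so no symbol can repeat within a row or column of $T$. For the same reason, each symbol keeps exactly the replication number it had in its parent array, so the multiset of replication numbers of $T$ is the union of those of $T_1$ and $T_2$, whose values lie in $\{e_1^-, e_1^+, e_2^-, e_2^+\}$. Hypothesis (1), namely $\max(e_1^+, e_2^+) - \min(e_1^-, e_2^-) \le 1$, forces these integers into two consecutive values, so condition (1) of Proposition~\ref{prop:altdefNTA} holds with $x = \min(e_1^-, e_2^-)$.

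Next I would treat the row--column and column--column intersections, both of which reduce cleanly to the component arrays. A column $j$ of $T$ comes entirely from one parent, say $T_1$, and all its symbols lie in the symbol set of $T_1$; hence for any row $i$ the symbols common to row $i$ and column $j$ in $T$ are exactly those common to the corresponding row and column of $T_1$. By Lemma~\ref{lm:avg}\ref{avg:lrc+-} every such count equals $e_1^-$ or $e_1^+$ (and $e_2^-$ or $e_2^+$ when the column comes from $T_2$), so hypothesis (1) again confines all values to two consecutive integers and condition (2) holds with $x_{rc} = \min(e_1^-, e_2^-)$. For column--column intersections there are two cases: two columns from the same parent meet in a column--column intersection of that array, which by hypothesis (3), $\lcc^1, \lcc^2 \le 1$, is $0$ or $1$, while two columns from different parents share no symbols at all. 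Thus condition (4) holds with $x_{cc} = 0$.

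The main work, and the reason for hypothesis (2), is the row--row condition. Since the symbol sets are disjoint, for any two rows $i$ and $i'$ the common symbols split according to their parent array, giving $|R_i \cap R_{i'}| = |R_i^1 \cap R_{i'}^1| + |R_i^2 \cap R_{i'}^2|$, where $R_i^k$ denotes the symbols of row $i$ in $T_k$ and the two terms are row--row intersections in $T_1$ and $T_2$ respectively. The hard point is that if both $\lrr^1$ and $\lrr^2$ were non-integers, each term would independently range over two consecutive values, and their sum could span \emph{three} consecutive values, one too many for a near triple array. Hypothesis (2) is precisely what rules this out: assuming without loss of generality that $\lrr^1 \in \Z$, every term coming from $T_1$ equals $\lrr^1$, so the sum takes only the two consecutive values $\lrr^1 + \lfloor \lrr^2 \rfloor$ and $\lrr^1 + \lceil \lrr^2 \rceil$. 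Hence condition (3) holds with the integer $x_{rr} = \lrr^1 + \lfloor \lrr^2 \rfloor$.

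With all four conditions verified, Proposition~\ref{prop:altdefNTA} yields that $T$ is an $(r \times (c_1 + c_2), v_1 + v_2)$-near triple array, completing the argument. I expect the only genuinely delicate step to be the row--row count, since the other three conditions follow almost formally from disjointness of symbols together with hypotheses (1) and (3); the role of hypothesis (2) is exactly to prevent the two independent pairs of row--row values from compounding into three distinct sums.
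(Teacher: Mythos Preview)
Your proof is correct and follows essentially the same approach as the paper: verify the four conditions of Proposition~\ref{prop:altdefNTA} directly, using disjointness of symbols to separate the replication, row--column, column--column, and row--row counts into contributions from the two parent arrays, and invoking Lemma~\ref{lm:avg}\ref{avg:lrc+-} for the row--column bounds. Your write-up is somewhat more detailed (you explicitly note binarity and explain the motivation behind hypothesis~(2)), but the logical structure matches the paper's proof exactly.
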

\begin{proof}
Without loss of generality, we may assume that $\lrr^1 \in \Z$.
Each symbol appears in $T$ either $\min(e_1^-, e_2^-)$ or $\max(e_1^+, e_2^+)$ times, and, due to Lemma~\ref{lm:avg}~\ref{avg:lrc+-}, any row and column of $T$ have $\min(e_1^-, e_2^-)$ or $\max(e_1^+, e_2^+)$ symbols in common.
Each pair of rows of $T$ has $\lrr^1 + (\lrr^2)^-$ or $\lrr^1 + (\lrr^2)^+$ symbols in common, and each pair of columns of $T$ has $0$ or $1$ symbols in common.
Then, by Proposition~\ref{prop:altdefNTA}, $T$ is an $(r \times (c_1 + c_2), v_1 + v_2)$-near triple array.
\end{proof}

%-----------------------------------------------------------------
\subsection{Connections with (near) Youden rectangles and Latin squares}\label{ssec:ex_nyr_latin}
%-----------------------------------------------------------------

In all cases we considered for enumeration (see tables in Appendix~\ref{ap:nta_tables}), we were always able to find examples of $(r \times n, n)$ and $(r \times (n - 1), n)$-near triple arrays with $r \leq n$.
This is not true in general: $(r \times n, n)$-near triple arrays are $r \times n$ (near) Youden rectangles, and it follows from results by Brown~\cite{brown} that there are no $6 \times 17$ near Youden rectangles.
Nevertheless, in the next lemma we prove that the existence of $(r \times n, n)$-near triple arrays implies the existence of $(r \times (n - 1), n)$-near triple arrays.
As can be seen in Table~\ref{tbl:nonta}, there are no $(r \times (n - 2), n)$-near triple arrays for quite a few values of $n$ and $r$, so there is no hope of extending this construction one step further.

\begin{lemma}\label{lm:from_Youden}
    For $r \leq n$, let $T$ be an $(r \times n, n)$-near triple array, that is, a (near) Youden rectangle, and let $T'$ be a row-column design obtained from $T$ by deleting any column.
    Then $T'$ is an $(r \times (n - 1), n)$-near triple array.
\end{lemma}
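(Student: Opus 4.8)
I need to show that deleting a column from an $(r \times n, n)$-near triple array (i.e., a near Youden rectangle) yields an $(r \times (n-1), n)$-near triple array. Let me think carefully about what happens to each of the four defining quantities.

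Let me think about the starting object. In $T$, each of the $n$ symbols appears exactly once per column (Youden rectangle = Latin rectangle), so each symbol appears exactly $r$ times total across $n$ columns—wait, no. A Latin rectangle has each symbol appearing once per row and at most once per column. With $r$ rows and $n$ symbols, each row is a permutation of a size-$n$... no wait, each row has $n$ cells and $n$ symbols, so each row IS a permutation of all $n$ symbols. So each symbol appears exactly once in each row, hence exactly $r$ times total. So $T$ is equireplicate with $e = r$. Indeed $e = rc/v = rn/n = r$. Good.

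So in $T$: every symbol appears $r$ times, $\lambda_{rc} = e = r$ (each row contains all symbols, so $|R_i \cap C_j| = |C_j| = r$... wait $C_j$ has $r$ symbols since column has $r$ cells, all distinct; and $R_i$ is all $n$ symbols; so $|R_i \cap C_j| = r$). And $\lambda_{rr}$: two rows are both full permutations, so they share all $n$ symbols, giving $|R_i \cap R_j| = n$, constant. So conditions RC and RR hold trivially as constants. Only $\lambda_{cc}$ is relaxed in a near Youden rectangle.

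Now delete one column. New array $T'$ is $r \times (n-1)$ on $n$ symbols. Here $e' = r(n-1)/n = r - r/n$, which is generally not an integer, so $T'$ should be near equireplicate. The deleted column contained $r$ symbols (distinct), each now losing one occurrence: those $r$ symbols now appear $r-1$ times, the other $n-r$ symbols still appear $r$ times. So replication numbers are $r-1$ and $r$—two consecutive values.

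**Verifying the four conditions via Proposition~\ref{prop:altdefNTA}.**

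I'll use the clean integer characterization (Proposition~\ref{prop:altdefNTA}) rather than computing $e^\pm, \lambda^\pm$ explicitly. Set the target constants and check each condition holds with two consecutive values:

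\emph{Replication.} As computed, symbols appear $r-1$ or $r$ times; take $x = r-1$.

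\emph{Rows.} Each row of $T'$ is a row of $T$ with one entry removed, so $R'_i$ is all $n$ symbols minus the one symbol that sat in the deleted column of row $i$. Since the deleted column had $r$ distinct symbols placed in the $r$ rows, each row loses exactly one symbol, so $|R'_i| = n-1$. For two rows, they lose (possibly different) single symbols, so $|R'_i \cap R'_j|$ is $n-1$ or $n-2$ depending on whether they lost the same symbol—but within a single column all $r$ entries are distinct, so rows $i,j$ lose \emph{different} symbols, giving $|R'_i \cap R'_j| = n-2$ always. So RR is even constant; take $x_{rr} = n-2$.

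\emph{Row--column.} Columns of $T'$ are untouched columns of $T$, so each $C'_j$ has $r$ distinct symbols; $R'_i \cap C'_j = C'_j \setminus \{\text{symbol lost by row } i\}$, which has size $r$ or $r-1$. Take $x_{rc} = r-1$.

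\emph{Columns.} The surviving columns are unchanged, so $|C'_i \cap C'_j| = |C_i \cap C_j|$ for the retained columns—these took two consecutive values $\lambda_{cc}^-, \lambda_{cc}^+$ in $T$, and this property is inherited directly. Take $x_{cc} = \lambda_{cc}^-$.

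**Where the subtlety lies.**

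All four conditions give two consecutive values, so by Proposition~\ref{prop:altdefNTA}, $T'$ is a near triple array. The only mild subtlety is confirming the column--column condition transfers: the pairwise column intersections among surviving columns are a subset of those in $T$, and since $T$ (as a near Youden rectangle) has all column intersections in $\{\lambda_{cc}^-, \lambda_{cc}^+\}$, the sub-collection still lies in this two-element set. I expect this to be the one step worth stating explicitly, since it is the only condition in $T'$ that is genuinely relaxed rather than trivially constant. Everything else follows from the structural fact that deleting a column from a Latin rectangle removes exactly one distinct symbol from each row. The proof is therefore a direct application of the alternative characterization, and I anticipate no real obstacle beyond this bookkeeping.
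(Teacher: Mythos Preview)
Your proof is correct and follows essentially the same approach as the paper: verify that replication numbers, row--row, row--column, and column--column intersection sizes each take at most two consecutive values, then invoke Proposition~\ref{prop:altdefNTA}. The paper's argument is slightly more terse but covers exactly the same four checks in the same order.
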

\begin{proof}
Every symbol appears in $T'$ either $r$ or $r - 1$ times.
Each row of $T'$ contains all symbols except one, so it has either $r - 1$ or $r$ common symbols with any remaining column.
Moreover, since the deleted column contained $r$ different symbols, the missing symbol is different for each row, thus any pair of rows has $n - 2$ symbols in common.
The number of common symbols between any pair of columns of $T'$ is one of two consecutive integers because this holds in $T$.
By Proposition~\ref{prop:altdefNTA}, $T'$ is an $(r \times (n - 1), n)$-near triple array.
\end{proof}

It is well known that removing a single row from an $n \times n$ Latin square will produce an $(n - 1) \times n$ Youden rectangle.
We will now show that starting from a suitably chosen Latin square, it is always possible to remove two rows to get an $(n - 2) \times n$ near Youden rectangle.
Combining this with Lemma~\ref{lm:from_Youden}, we get a few more explicit constructions of near triple arrays.

\begin{lemma}\label{lm:from_latin}
    For any $n \geq 4$ and $k \leq 2$ there exist $((n - k) \times n, n)$ and $((n - k) \times (n - 1), n)$-near triple arrays.
\end{lemma}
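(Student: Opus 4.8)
The plan is to reduce the entire statement to the construction of $((n-k) \times n, n)$-near triple arrays, i.e.\ (near) Youden rectangles, for $k \in \{0,1,2\}$, and then to invoke Lemma~\ref{lm:from_Youden} to pass from each of these to an $((n-k) \times (n-1), n)$-near triple array by deleting a single column. The cases $k=0$ and $k=1$ are immediate: an $(n \times n, n)$-near triple array is just an $n \times n$ Latin square, and deleting any one of its rows yields an $((n-1) \times n, n)$-near triple array, as recalled in the text preceding the lemma. So the real content lies in the case $k=2$.

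For $k=2$, I would first pin down exactly which condition must be arranged. An $(n-2) \times n$ Latin rectangle on $n$ symbols is automatically binary and equireplicate with $e = n-2$, and since each of its rows contains all $n$ symbols, the row--column and row--row intersections are constant (equal to $n-2$ and $n$, respectively), so conditions~1 and~2 of Definition~\ref{def:NTA} hold trivially. A short computation with Lemma~\ref{lm:equiavg} gives $\lcc = (n-2)(n-3)/(n-1) = n-4 + 2/(n-1)$, so $\lcc^- = n-4$ and $\lcc^+ = n-3$; hence the only thing to ensure is that every pair of columns shares either $n-4$ or $n-3$ symbols, i.e.\ that the rectangle is in fact a near Youden rectangle.

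The key reformulation is in terms of missing symbols. I would obtain the $(n-2) \times n$ array by deleting two rows, given by permutations $\pi_1,\pi_2$ of the symbol set, from an $n \times n$ Latin square. The set of symbols missing from column $j$ of the reduced array is then exactly $M_j := \{\pi_1(j),\pi_2(j)\}$, a pair of \emph{distinct} symbols since two rows of a Latin square agree in no column. Because $|C_i \cap C_j| = n - |M_i \cup M_j|$, two columns share $n-2$, $n-3$ or $n-4$ symbols according as $|M_i \cup M_j|$ equals $2$, $3$ or $4$; the forbidden value $n-2$ arises precisely when $M_i = M_j$. Setting $\tau := \pi_2\pi_1^{-1}$, the pair indexed by the symbol $s = \pi_1(j)$ is $\{s,\tau(s)\}$, and two such pairs coincide exactly when $\tau$ interchanges the two symbols in question. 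Thus all $M_j$ are distinct if and only if $\tau$ is fixed-point-free with no $2$-cycle, that is, all cycles of $\tau$ have length at least $3$.

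It then suffices to realize such a $\tau$ inside some Latin square. I would take $\pi_1$ to be the identity and $\pi_2$ to be an $n$-cycle; for $n \geq 4$ an $n$-cycle is a single cycle of length $\geq 3$, so $\tau = \pi_2$ has neither fixed points nor $2$-cycles, and in particular $\pi_1,\pi_2$ form a valid $2 \times n$ Latin rectangle. By the classical theorem that every Latin rectangle extends to a Latin square, these two rows complete to an $n \times n$ Latin square; deleting them yields the desired $((n-2) \times n, n)$-near triple array. Applying Lemma~\ref{lm:from_Youden} with $r = n-k$ to the arrays built for each $k \in \{0,1,2\}$ then produces the corresponding $((n-k) \times (n-1), n)$-near triple arrays. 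The main obstacle is the $k=2$ case, and specifically the observation that the near Youden condition is equivalent to the absence of $2$-cycles in $\tau$; once that translation is made, both the existence of a suitable $\tau$ and its completability to a Latin square are routine.
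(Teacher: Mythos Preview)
Your proof is correct and follows the same overall strategy as the paper: build the $((n-k)\times n,n)$-near triple arrays directly and then apply Lemma~\ref{lm:from_Youden} for the $(n-1)$-column versions. For $k=2$ the paper simply takes the addition table of $\Z_n$ and deletes its last two rows, observing that this square has no intercalates and hence no two columns lose the same pair of symbols. Your argument is a mild rephrasing of this: the condition ``$\tau=\pi_2\pi_1^{-1}$ has no $2$-cycle'' is precisely the statement that the two deleted rows sit in no intercalate, and taking $\pi_1=\mathrm{id}$ with $\pi_2$ an $n$-cycle is exactly what one gets from two consecutive rows of the $\Z_n$ table. The only substantive difference is that you invoke the Latin rectangle completion theorem to produce the ambient square, while the paper has an explicit square from the outset; this makes the paper's version slightly more self-contained, while yours isolates the obstruction more transparently.
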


\begin{proof}
Let $T = T_0$ be the Latin square corresponding to the addition table in $\Z_n$, and let $T_1$ and $T_2$ be the Latin rectangles obtained from $T$ by deleting the last row and the last two rows, respectively.
For $k = 0, 1, 2$, $T_k$ is an $((n - k) \times n, n)$-near triple array: it is clear for $k = 0, 1$, and for $k = 2$ it is true because the addition table in $\Z_n$ has no intercalates, so no two columns lost the same two symbols after the deletion, and consequently column intersection sizes are either $n-3$ or $n-4$.
The existence of $((n - k) \times (n - 1), n)$-near triple arrays follows from Lemma~\ref{lm:from_Youden}.
\end{proof}

Note that a $2\times n$ array on $n$ symbols without intercalates is in fact a near triple array. 
Therefore, a wider perspective on Lemma~\ref{lm:from_latin} is that starting with a suitable Latin square, and removing a set of rows forming a $(k \times n, n)$-near triple array, the remaining array (the complement) is an $((n-k) \times n, n)$-near triple array. We formulate this as a lemma.

\begin{lemma}\label{lm:complement}
    Suppose there exists a $(k \times n, n)$-near triple array. Then there exists an $((n - k) \times n, n)$-near triple array and an $((n - k) \times (n-1), n)$-near triple array.
\end{lemma}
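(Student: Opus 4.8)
The plan is to realize the given $(k \times n, n)$-near triple array $T$ as the top $k$ rows of a Latin square and then pass to its complement. Recall that a $(k \times n, n)$-near triple array is a $k \times n$ (near) Youden rectangle, hence a $k \times n$ Latin rectangle on the symbol set $[n]$. By the classical theorem that every $k \times n$ Latin rectangle with $k \leq n$ extends to an $n \times n$ Latin square (a standard consequence of Hall's marriage theorem), I would complete $T$ to an $n \times n$ Latin square $L$. Let $T'$ be the $(n - k) \times n$ array formed by the bottom $n - k$ rows of $L$; it is again a Latin rectangle on $[n]$. It then remains only to verify the column-intersection condition for $T'$, since the row--row and row--column conditions hold automatically for any Latin rectangle: every row contains all $n$ symbols, so each pair of rows shares all $n$ symbols, and each row--column pair shares exactly the $n-k$ symbols of that column.

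The key step is to relate the column intersection sizes of $T'$ to those of $T$. For a column $j$, write $S_j^{\,t}$ for the set of symbols occurring in column $j$ of $T$ and $S_j^{\,b}$ for those in column $j$ of $T'$. Since column $j$ of $L$ is a permutation of $[n]$, these two sets partition $[n]$, so $S_j^{\,b} = [n] \setminus S_j^{\,t}$, with $|S_j^{\,t}| = k$. For two columns $i, j$, writing $a_{ij} = |S_i^{\,t} \cap S_j^{\,t}|$ and $b_{ij} = |S_i^{\,b} \cap S_j^{\,b}|$, inclusion--exclusion gives
\[
b_{ij} = \bigl|([n] \setminus S_i^{\,t}) \cap ([n] \setminus S_j^{\,t})\bigr| = n - |S_i^{\,t} \cup S_j^{\,t}| = n - 2k + a_{ij}.
\]
Because $T$ is a near triple array, the values $a_{ij}$ lie in a set of two consecutive integers $\{x_{cc}, x_{cc} + 1\}$; hence the $b_{ij}$ lie in $\{n - 2k + x_{cc}, n - 2k + x_{cc} + 1\}$, again two consecutive integers. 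By Proposition~\ref{prop:altdefNTA}, $T'$ is therefore an $((n - k) \times n, n)$-near triple array, which proves the first assertion.

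For the second assertion, I would simply apply Lemma~\ref{lm:from_Youden} to the $((n - k) \times n, n)$-near triple array $T'$: deleting any single column yields an $((n - k) \times (n - 1), n)$-near triple array. I do not expect a serious obstacle here. The only point requiring a little care is the appeal to Latin rectangle completion, but this is harmless: the identity $b_{ij} = n - 2k + a_{ij}$ holds independently of the choice of completion $L$, so \emph{any} completion works and no genuine ``suitability'' of the Latin square is needed, in contrast to the more delicate explicit construction of Lemma~\ref{lm:from_latin}.
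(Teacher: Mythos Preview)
Your proof is correct and follows essentially the same approach as the paper's: complete the Latin rectangle to a Latin square, take the complementary rows, observe that column intersections shift by the constant $n-2k$, and invoke Lemma~\ref{lm:from_Youden} for the second claim. Your write-up is in fact a bit more explicit than the paper's (naming Hall's theorem, spelling out the inclusion--exclusion, and noting that any completion works), but the argument is the same.
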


\begin{proof}
    Let $T$ be a $(k \times n, n)$-near triple array. Viewing it as a Latin rectangle, it is well-known that $T$ may be completed to a Latin square $L$. We claim that the complement $T'$ of $T$ in $L$, that is, $L$ with the rows from $T$ removed, is an $((n - k) \times n, n)$-near triple array.
    
    The only near triple array property of $T'$ that is not immediate is the column intersection property.
    Suppose that columns $j_1$ and $j_2$ of $T$ share $a$ common symbols, then they contain in total $2k - a$ different symbols.
    The columns $j_1$ and $j_2$ of $T'$ then share precisely $n - 2k + a$ symbols which do not appear in columns $j_1$ and $j_2$ of $T$.
    Since column intersection sizes in $T$ take at most two values, the same holds for $T'$, so $T'$ is a near triple array.
    The existence of an $((n - k) \times (n-1), n)$-near triple array then follows from Lemma~\ref{lm:from_Youden}.
\end{proof}

%-----------------------------------------------------------------
\subsection{Near triple arrays with 3 rows}\label{ssec:ex_3с}
%-----------------------------------------------------------------

In Table~\ref{tbl:nta3}, we see that there are no $(3 \times 3, 6)$, $(3 \times 4, 6)$ or $(3 \times 5, 8)$-near triple arrays, but there are examples of $(3 \times c, v)$-near triple arrays in all considered cases with $c \geq 6$.
Existence turns out to hold for all larger $c$ as well.

\begin{theorem}\label{thm:ex_3c}
    There exist $(3 \times c, v)$-near triple arrays for any $c \geq 6$ and $v \geq c$.
\end{theorem}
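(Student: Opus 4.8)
The plan is to split the range $c \le v \le 3c$ (the upper bound being forced by binarity) into a ``high'' part handled by the replication-increasing operation of Lemma~\ref{lm:incv}, and a ``low'' part $c \le v \le \lceil 3c/2\rceil$, where $e \ge 2$ and every symbol occurs two or three times, handled by an explicit construction. Lemma~\ref{lm:incv} shows that a single $(3\times c, v_0)$-near triple array with $v_0 \ge 3c/2$ already yields $(3\times c, v)$-near triple arrays for all $v_0 \le v \le 3c$. Taking $v_0 = \lceil 3c/2\rceil$ as the anchor, it therefore suffices to construct near triple arrays for every $v$ with $c \le v \le \lceil 3c/2\rceil$. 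Writing $v = c+s$, Lemma~\ref{lm:k-k+} tells us such an array must have $c-2s$ symbols of replication $3$ and $3s$ symbols of replication $2$ (with one symbol of replication $1$ in the boundary case where $c$ is odd and $v=\lceil 3c/2\rceil$).

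For the low range I would realise the target array as a concatenation, in the sense of Lemma~\ref{lm:concat}, of two extreme blocks on disjoint symbol sets: a \emph{near Youden block} $Y$, namely a $(3\times(c-2s),\, c-2s)$-near triple array in which every symbol occurs three times, and an \emph{all-doubles block} $D$, namely a $(3 \times 2s,\, 3s)$-near triple array in which every symbol occurs exactly twice. Placing these side by side yields a $3\times c$ array on $(c-2s)+3s = c+s = v$ symbols. Both blocks are equireplicate, with $e=3$ and $e=2$ respectively, so $\max e^+-\min e^- = 1$ and the first hypothesis of Lemma~\ref{lm:concat} holds; both have integer $\lambda_{rr}$, equal to $c-2s$ and to $s$, so the second hypothesis holds; and both satisfy $\lambda_{cc}\le 1$ whenever the blocks are not too small, giving the third. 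The concatenation is then the desired $(3\times c,\, c+s)$-near triple array.

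The technical heart is the construction of the two block families. I would model a near Youden block on $m$ symbols as a $3$-regular, $3$-uniform \emph{linear} hypergraph on $m$ vertices (the symbols), whose $m$ edges are the columns: linearity forces $\lambda_{cc}\le 1$, and a proper $3$-edge-colouring of its $3$-regular bipartite incidence graph---which exists by K\"onig's edge-colouring theorem---assigns the three symbols of each column to the three physical rows, automatically giving a binary array with $\lambda_{rc}=3$ and all row--row intersections equal to $m$, as required by Proposition~\ref{prop:altdefNTA}. An all-doubles block I would model as a \emph{simple} cubic graph on $2s$ vertices (the columns), each symbol being an edge joining the two columns in which it occurs: simplicity forces $\lambda_{cc}\le 1$, and here one must additionally choose the row placements so that the three row--types of the doubles are balanced (which is what yields the integer value $\lambda_{rr}=s$) and so that every row--column intersection equals $2$.

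The main obstacle is precisely this last step: producing, for every $m$ and every $s$, explicit hypergraphs and cubic graphs together with row assignments that simultaneously control the column--column intersections (linearity/simplicity), the balance of the row--types, and the row--column intersections. I expect this to require concrete cyclic constructions with a short case analysis modulo a small residue of $c$. A secondary, bookkeeping obstacle is the finitely many degenerate situations the clean concatenation misses: blocks too small for $\lambda_{cc}\le 1$ (for instance $s\in\{0,1\}$ or $c-2s\le 6$, where one instead uses a pure block or an ad hoc decomposition), the odd-$c$ anchor carrying one singleton symbol, and the smallest overall case $c=6$. These I would dispatch by separate explicit small arrays, reading the $c=6$ instances off the enumeration in Appendix~\ref{ap:nta_tables}.
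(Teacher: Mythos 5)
Your skeleton is genuinely different from the paper's proof (which runs induction on $c$ with computer-verified bases $6 \leq c \leq 13$, concatenating the fixed small blocks $(3 \times 6, 9)$ and $(3 \times 7, 7)$ via Lemma~\ref{lm:concat}, and invoking Lemma~\ref{lm:ex_tail} for $v \geq 5c/2$), and parts of it are sound: the reduction of the high range to the anchor $v_0 = \lceil 3c/2 \rceil$ via Lemma~\ref{lm:incv} is correct, the K\"onig-coloring construction of the pure block $Y$ works for all $m = c - 2s \geq 7$ (e.g.\ from the cyclic linear hypergraph with base block $\{0,1,3\}$ modulo $m$), and you correctly identify that the all-doubles block must have all three row-pair intersections exactly $s$ and every row--column intersection exactly $2$ --- indeed, since that block is equireplicate with $e = 2$, Lemma~\ref{lm:avg} forces $\lrc = 2$ exactly, so there is no slack at all there.

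The genuine gap is that your screening of exceptional cases is wrong at $s = 2$ and your remaining exceptions are infinite families, not ``finitely many degenerate situations''. For $s = 2$ the all-doubles block is a $(3 \times 4, 6)$-near triple array; it has $\lcc = 1 \leq 1$, so it passes your condition~(3) check and is not caught by your list ``$s \in \{0,1\}$ or $c - 2s \leq 6$'' --- yet no such array exists (Table~\ref{tbl:nonta}, Remark~\ref{rmrk:false_ptrn}). In your cubic-graph language the reason is exactly the constraint you flagged: all row--column intersections equal to $2$ forces, at each vertex, the ``far'' rows of the three incident edges to be a fixed-point-free permutation of the ``near'' rows, and on $K_4$ (the only simple cubic graph on $4$ vertices) this is impossible. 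So your decomposition fails for $v = c + 2$ for \emph{every} $c$. Likewise $v = c + 1$, the strip of two to three values $(3c-7)/2 < v < \lceil 3c/2 \rceil$ (where $c - 2s \leq 6$ but small pure-triple blocks have $\lcc = 6/(m-1) > 1$ and cannot be concatenated), and the odd-$c$ anchor $v = (3c+1)/2$ are each one-or-more values of $v$ for every $c$; ``separate explicit small arrays'' cannot dispatch them, and each needs its own construction --- e.g.\ mixed blocks such as a $(3 \times 6, 7)$- or $(3 \times 6, 8)$-near triple array concatenated with $Y$ for $v = c+1, c+2$, and a construction like part~2 of Lemma~\ref{lm:constr_3c} (which is precisely the paper's $(3 \times (2k+1), 3k+2)$ array) for the odd anchor. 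Finally, the balanced all-doubles block for $s \geq 3$, which you call the technical heart, is only conjectured in your write-up; it is true (part~1 of Lemma~\ref{lm:constr_3c} supplies exactly such a block), but without it and without the patches above, the proposal as written does not cover all pairs $(c, v)$ claimed by Theorem~\ref{thm:ex_3c}.
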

\begin{proof}
The proof is by induction on $c$, cases with $6 \leq c \leq 13$ form the base of induction; in all corresponding cases examples of near triple arrays were found by computer search (see Table~\ref{tbl:nta3}).

Note that any $(3 \times c, v)$-near triple array with $c \geq 7$ has $\lcc = \frac{3(\lrc - 1)}{c - 1} \leq \frac{6}{c - 1} \leq 1$.

Let $c \geq 14$.
If $v \geq 3c - c / 2$, a $(3 \times c, v)$-near triple array exists by Lemma~\ref{lm:ex_tail}.
Otherwise, if $v \geq c + 4$, then consider near triple arrays with parameters $(3 \times 6, 9)$ and $(3 \times (c - 6), v - 9)$, which exist by the induction hypothesis.
The first one has $e = 2$, $\lrr = 3 \in \Z$ and $\lcc < 1$, the second has either $e^- = 2$ or $e^+ = 2$ and $\lcc \leq 1$.
Thus, due to Lemma~\ref{lm:concat}, there is a $(3 \times c, v)$-near triple array. 

Finally, if $v \leq c + 3$, consider near triple arrays with parameters $(3 \times 7, 7)$ and $(3 \times (c - 7), v - 7)$, which again exist by the induction hypothesis.
The first one has $e = 3$, $\lrr = 7 \in \Z$ and $\lcc = 1$, the second has $e^+ = 3$ and $\lcc \leq 1$, so, by Lemma~\ref{lm:concat}, there is a $(3 \times c, v)$-near triple array.
\end{proof}

\begin{figure}[!ht]
\begin{center}
    \begin{tabular}{c c c}
	\begin{tabular}{|*{6}{c}|}
        \hline
        0 & 1 & 2 & 3 & 4 & 5\\
        1 & 6 & 3 & 7 & 5 & 8\\
        7 & 2 & 8 & 4 & 6 & 0\\
        \hline
    \end{tabular}
	& &
    \begin{tabular}{|*{7}{c}|}
        \hline
        0 & 1 & 2 & 3 & 4 & 5 & 6\\
        1 & 2 & 3 & 4 & 5 & 6 & 0\\
        3 & 4 & 5 & 6 & 0 & 1 & 2\\
        \hline
    \end{tabular}
    \end{tabular}
	\end{center}
\caption{A $(3 \times 6, 9)$-near triple array and a $(3 \times 7, 7)$-near triple array used in the proof of Theorem~\ref{thm:ex_3c}. Both are unique up to isotopism as can be seen in Table~\ref{tbl:nta3}.}
\label{fig:ex_3c}
\end{figure}

\begin{corollary}\label{cor:ex_n-3}
    There exist $((n-3) \times n, n)$-near triple arrays and
    $((n-3) \times (n-1), n)$-near triple arrays
    for any $n \geq 6$.
\end{corollary}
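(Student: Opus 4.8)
The plan is to recognize this as an immediate consequence of the two preceding results, once the parameters are aligned correctly. The target designs have $n-3$ rows, which is exactly the complement of a $3$-row design inside an $n \times n$ Latin square. Since Lemma~\ref{lm:complement} turns a $(k \times n, n)$-near triple array into both an $((n-k) \times n, n)$ and an $((n-k) \times (n-1), n)$-near triple array, the natural strategy is to instantiate it at $k = 3$ and supply the required $(3 \times n, n)$-near triple array from the three-row existence theorem.

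First I would invoke Theorem~\ref{thm:ex_3c} with $c = v = n$. Since $n \geq 6$ and trivially $v = n \geq c = n$, the theorem guarantees a $(3 \times n, n)$-near triple array for every $n \geq 6$. As noted in Section~\ref{sec:defs}, such an array is precisely a $3 \times n$ (near) Youden rectangle, hence a Latin rectangle; this is the property Lemma~\ref{lm:complement} needs in order to complete the $3$-row design to a Latin square and take its complement.

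Then I would apply Lemma~\ref{lm:complement} with $k = 3$ to this $(3 \times n, n)$-near triple array. Its conclusion directly produces both an $((n-3) \times n, n)$-near triple array and, via the embedded appeal to Lemma~\ref{lm:from_Youden}, an $((n-3) \times (n-1), n)$-near triple array, for all $n \geq 6$. This completes the proof.

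I do not anticipate a genuine obstacle here: the entire content is parameter bookkeeping, namely checking that the range $n \geq 6$ in Theorem~\ref{thm:ex_3c} matches the range claimed in the corollary, and that the choice $k = 3$ in Lemma~\ref{lm:complement} yields exactly the two shapes $(n-3) \times n$ and $(n-3) \times (n-1)$. The only point worth a moment's care is confirming that the $(3 \times n, n)$-near triple array really qualifies as a Latin rectangle so that the complement construction of Lemma~\ref{lm:complement} applies verbatim; this is guaranteed by the identification of $(3 \times n, n)$-near triple arrays with near Youden rectangles.
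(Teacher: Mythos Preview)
Your proposal is correct and follows exactly the paper's own proof: invoke Theorem~\ref{thm:ex_3c} with $c=v=n$ to obtain a $(3\times n,n)$-near triple array, then apply Lemma~\ref{lm:complement} with $k=3$. The extra remark about the Latin-rectangle property is harmless but unnecessary, since Lemma~\ref{lm:complement} already handles that internally.
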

\begin{proof}
    This follows immediately from Theorem~\ref{thm:ex_3c} with
    $c=v=n$ and Lemma~\ref{lm:complement}.
\end{proof}

The existence of $4 \times c$ near triple arrays for large $c$ can potentially be proved by a similar approach, though Table~\ref{tbl:nta4} would need to be extended much further to get an appropriate induction base.

Combining the next lemma with the construction from the proof of Lemma~\ref{lm:incv}, we can construct $(3 \times c, v)$-near triple arrays with any $c \geq 6$ and $v \geq 3c / 2$ more directly.

\begin{lemma}\label{lm:constr_3c}
Let $k \geq 3$.
\begin{enumerate}
\item Let $T$ be a $3 \times 2k$ row-column design on $3k$ symbols, with symbol $(3a + b)$, $0 \leq a < k$, $0 \leq b < 3$, appearing in cells $(b + 1, a + 1)$ and $((b + 1) \bmod 3 + 1, k + (a + b) \bmod 3 + 1)$ of $T$.
Then $T$ is a $(3 \times 2k, 3k)$-near triple array.
\item Let $T'$ be a $3 \times (2k + 1)$ row-column design on $3k + 2$ symbols obtained from $T$ by replacing symbol $0$ in cell $(1, 1)$ with symbol $3k$, and adding a new column with symbols $T'_{1, 2k + 1} = 0$, $T'_{2, 2k + 1} = 3k$ and $T'_{3, 2k + 1} = 3k + 1$.
Then $T'$ is a $(3 \times (2k + 1), 3k + 2)$-near triple array.
\end{enumerate}
\end{lemma}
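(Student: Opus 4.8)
The plan is to verify directly that the explicit arrays $T$ and $T'$ satisfy the conditions of Proposition~\ref{prop:altdefNTA}. Since both are given by completely explicit formulas, the proof is a computation: I check that each is binary, count occurrences of each symbol, and verify that the row-row, row-column and column-column intersection sizes each take at most two consecutive values. The key observation to keep in mind throughout is that for $c \geq 7$ we have $\lcc \leq 1$ (as noted in the proof of Theorem~\ref{thm:ex_3c}), so the column-column condition we are aiming for is simply that any two columns share $0$ or $1$ symbols; likewise $e = 3k/(2k) = 3/2$ for $T$, so the replication numbers should be $1$ or $2$, i.e.\ every symbol appears once or twice.

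\medskip
\textbf{Part (1).} First I read off the structure of $T$. Symbol $3a+b$ occupies one cell in the left block (columns $1,\dots,k$), namely $(b+1,\,a+1)$, and one cell in the right block (columns $k+1,\dots,2k$), namely in row $(b+1)\bmod 3 + 1$. Since $a$ ranges over $0,\dots,k-1$ and $b$ over $0,1,2$, the left block is filled so that row $b+1$ contains exactly the symbols $\{3a+b : 0\le a<k\}$, one per column; hence each symbol appears exactly twice in $T$ and $T$ is binary (the two occurrences lie in different columns, and in the left block each row/column clearly has distinct symbols; for the right block I check the row-shift map $b\mapsto (b+1)\bmod 3$ together with the column-shift $a\mapsto (a+b)\bmod 3$ keeps entries distinct within a row and column). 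So every symbol has replication number $2$, giving $x=1$ in Proposition~\ref{prop:altdefNTA} and the $e$-condition. The row-column intersection: because $e=2$ and the array is binary, Lemma~\ref{lm:avg}\ref{avg:lrc+-} already forces any row and column to share $\lrc^-$ or $\lrc^+$ common symbols, so that condition is automatic. For columns, I must show any two columns meet in $0$ or $1$ symbols; two columns share a symbol only when that symbol's two occurrences lie in those two columns, and since each symbol occupies one left-block and one right-block column, any pair of columns shares at most one symbol (and none if both lie in the same block). Finally, for the row-row intersections I count, for each pair of rows, how many symbols occur in both; this is where the residue bookkeeping in the right block matters, and I expect the count to come out to two consecutive values governed by how $b$ and $(a+b)\bmod 3$ distribute across the three rows.

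\medskip
\textbf{Part (2).} For $T'$ I track only the local modification. Replacing symbol $0$ in cell $(1,1)$ by the fresh symbol $3k$ and appending the column $(0,3k,3k+1)^{\mathsf T}$ changes occurrence counts as follows: symbol $0$ now appears in cell $(3,k+1)$ and the new cell $(1,2k+1)$ (still twice); symbol $3k$ appears in $(1,1)$ and $(2,2k+1)$ (twice); the brand-new symbol $3k+1$ appears once. So on $3k+2$ symbols the replication numbers are $1$ and $2$, matching $x=1$; and $T'$ remains binary provided the new column's three entries are distinct (they are, being $0,3k,3k+1$) and the swapped symbol $3k$ does not already occur in row $1$ or column $1$ (it does not, being new). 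The column condition requires the new column to share at most one symbol with each old column; since the new column holds $0,3k,3k+1$ and each of these has at most one other occurrence, this holds. The row-column condition again follows from binarity together with the replication pattern via Lemma~\ref{lm:avg}. The only delicate point is the row-row intersection: I compare pairwise row overlaps in $T'$ against those in $T$, noting that the edit touches exactly rows $1,2,3$ in the new column and swaps one symbol between rows $1$ and $3$; I verify these perturbations keep each pairwise row-overlap within a window of two consecutive integers.

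\medskip
\textbf{The main obstacle} I anticipate is the row-row intersection count, since that is the one condition not handed to us for free by binarity and the replication pattern. The cleanest way to control it is to compute, for each unordered pair of rows $\{i,i'\}$, the number of symbols whose two cells land in those two rows, splitting the count into contributions from (left-block vs.\ left-block), (left vs.\ right), and (right vs.\ right) occurrence pairs; the residue-class shifts $(b+1)\bmod 3$ and $(a+b)\bmod 3$ make this a finite modular count independent of $k$ (up to the sizes of the three residue classes among $a=0,\dots,k-1$), so the two permitted values $x_{rr}, x_{rr}+1$ emerge explicitly, after which Part~(2)'s local edit shifts at most one of these counts by one.
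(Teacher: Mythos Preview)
Your proposal has a genuine gap in the treatment of the row--column intersection condition, which you claim is ``automatic'' from Lemma~\ref{lm:avg}\ref{avg:lrc+-}. That lemma only asserts $\lrc^- = e^-$ and $\lrc^+ = e^+$, i.e., it identifies the floor and ceiling of the \emph{average} row--column intersection; it says nothing about individual intersections. A binary equireplicate array with $e=2$ can certainly have a row and column sharing $3$ symbols (take any column and arrange for two of its symbols to have their second occurrences in the same row). The paper handles this not for free but via a structural observation: for any column of $T$, the three symbols' second occurrences lie in pairwise different rows and pairwise different columns. This single fact simultaneously gives $\lcc \le 1$ and forces every row--column intersection to be $1$ or $2$. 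You need an argument of this kind; the same gap recurs in Part~(2).

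There are also smaller slips that would derail the bookkeeping if carried through. Your computation $e = 3k/(2k) = 3/2$ is backwards: $e = rc/v = 3\cdot 2k/(3k) = 2$, so $T$ is equireplicate. In Part~(2) you place the surviving occurrence of symbol $0$ at $(3,k+1)$, but for $a=b=0$ the right-block row is $(0+1)\bmod 3 + 1 = 2$, so it is at $(2,k+1)$; consequently your description of the edit as ``swapping one symbol between rows $1$ and $3$'' is off. The correct accounting is that rows $1$ and $2$ each gain the new symbol $3k$ while row $3$ gains only $3k+1$, so $|R_1'\cap R_2'| = k+1$ while the other two row intersections remain $k$, exactly as the paper states. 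Your row--row plan for $T$ (a modular residue count) is in the right direction but is not carried out; the paper's version is cleaner: row $i$ consists of the two residue classes $i-1$ and $(i-2)\bmod 3$ modulo $3$, so any two rows share exactly one residue class and hence exactly $k$ symbols, a single value rather than two.
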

\begin{proof}
By construction, $T$ is equireplicate, and any two rows of $T$ have $k$ symbols in common.
Additionally, for any 3 symbols appearing in the same column, the second occurrences of these 3 symbols are in pairwise different rows and columns, which implies that any two columns share at most one symbol, and that any row and column share 1 or 2 symbols.
Thus, $T$ is a near triple array by Proposition~\ref{prop:altdefNTA}. This establishes the first claim.

For the second claim, note first that the new column shares 1 symbol with columns 1 and $k + 1$, no symbols with other columns, 2 symbols (i.e. symbols $0$ and $3k$) with the first two rows and 1 symbol with row 3.
Finally, the first two rows have $k + 1$ symbols in common, and other pairs of rows still have $k$ common symbols.
Again, due to Proposition~\ref{prop:altdefNTA}, $T'$ is a near triple array. \qedhere
\end{proof}

\begin{figure}[!ht]
\begin{center}
    \begin{tabular}{c c c}
	\begin{tabular}{|cccc|cccc|}
        \hline
        0 & 3 & 6 & 9 & 8 & 11 & 2 & 5\\
        1 & 4 & 7 & 10 & 0 & 3 & 6 & 9\\
        2 & 5 & 8 & 11 & 10 & 1 & 4 & 7\\
        \hline
    \end{tabular}
	& &
    \begin{tabular}{|cccc|cccc|c|}
        \hline
        12 & 3 & 6 & 9 & 8 & 11 & 2 & 5 & 0\\
        1 & 4 & 7 & 10 & 0 & 3 & 6 & 9 & 12\\
        2 & 5 & 8 & 11 & 10 & 1 & 4 & 7 & 13\\
        \hline
    \end{tabular}
    \end{tabular}
	\end{center}
\caption{Constructions from Lemma~\ref{lm:constr_3c} for $k = 4$.}
\label{fig:construct_3c}
\end{figure}

%-----------------------------------------------------------------
\section{Component designs}\label{sec:comp}
%-----------------------------------------------------------------

Let $T$ be an $r \times c$ binary row-column design on $v$ symbols.
For a symbol $x$ of $T$, consider the set of columns of $T$ which contain $x$.
Treating all $v$ such sets as blocks, and the $c$ columns of $T$ as points, we get a block design which we will call the \textit{column design} $CD_T$ of $T$.
Similarly, we define the \textit{row design} $RD_T$ of $T$ with rows of $T$ as points, and blocks corresponding to sets of rows containing a fixed symbol.
We will focus mainly on the column design, but the row design has similar properties as it is the column design of the transposed array.

Now, suppose that $T$ is an $(r \times c, v)$-near triple array with parameters $e, \lrc, \lrr, \lcc$.
Then every point of $CD_T$ occurs in $r$ blocks, each block contains $e^-$ or $e^+$ points, and every pair of points is covered by $\lcc^-$ or $\lcc^+$ blocks.
Then the column design $CD_T$ belongs to a class of designs studied by Bofill and Torras~\cite{MBMUD}, called \textit{maximally balanced maximally uniform designs (MBMUDs)}. MBMUDs are block designs in which the block sizes are either all the same, or each equal to one of two adjacent integers, and the same condition holds for occurrence numbers of points, and for covering numbers of pairs of points. We see then that $CD_T$ is a MBMUD with fixed occurrence number.

When one or both of the parameters $e$ and $\lcc$ of $T$ are integers, $CD_T$ also belongs to other well-studied classes of designs.
Namely, if $e$ is an integer, then $CD_T$ has fixed block sizes and occurrence numbers, and covering numbers $\lcc^-$ or $\lcc^+$.
Such block designs are known as \textit{regular graph designs}, and were first introduced by John and Mitchell~\cite{johnOptimalIncompleteBlock1977}.
If $\lcc$ is an integer, then $CD_T$ has fixed occurrence and covering numbers, and block sizes $e^-$ or $e^+$, and $CD_T$ is a \textit{pairwise balanced design} $(c, \{e^-, e^+\}, \lcc)$-PBD.
For an introduction to pairwise balanced designs, see Part IV of the Handbook of Combinatorial Designs~\cite{colbournHandbookCombinatorialDesigns2007}.
Finally, if both $e$ and $\lcc$ are integers, which happens, for example, when $T$ is a triple array, then each block of $CD_T$ contains $e$ points, and any pair of points is covered by $\lcc$ blocks, so $CD_T$ is a \textit{balanced incomplete block design} $(c, e, \lcc)$-BIBD.

For an integer $n$ and a real $m$, we will use the notation $m^- := \lfloor m \rfloor$, $m^+ := \lceil m \rceil$ and define
\[
S(n, m) := n\binom{m}{2} + \frac{n(m - m^-)(m^+ - m)}{2},
\]
where $\binom{m}{2}$ for a possibly non-integer $m$ is defined as $m(m - 1) / 2$.
Note that, by definition, $S(n, m) \geq n\binom{m}{2}$, with equality precisely when $m$ is integer.
In our further treatment of the column design, we will use the following technical lemma.

\begin{lemma}\label{lm:bin_sum}
Let $\{a_i\}_{i\in I}$ be a collection of $n = |I|$ non-negative integers, and let $m := \frac{1}{n}\sum_{i \in I} a_i$ be the average value of the $a_i$.
Then
\begin{equation}\label{eq:bin_sum}
\sum_{i \in I} \binom{a_i}{2} \geq S(n, m),
\end{equation}
with equality holding if and only if $a_i \in \{m^-, m^+\}$ for all $i \in I$.
\end{lemma}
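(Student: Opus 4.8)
The plan is to reduce the inequality to a statement about the sum of squared deviations $\sum_i (a_i - m)^2$, and then to prove that statement by a single algebraic identity whose linear cross term is annihilated by the defining constraint $\sum_i (a_i - m) = 0$.

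First I would rewrite the left-hand side. Since $\binom{a_i}{2} = \tfrac12(a_i^2 - a_i)$ and $\sum_{i\in I} a_i = nm$, expanding $\sum_i a_i^2 = \sum_i (a_i - m)^2 + nm^2$ gives
\[
\sum_{i\in I}\binom{a_i}{2} = \frac12\sum_{i\in I}(a_i - m)^2 + n\binom{m}{2}.
\]
Comparing with the definition of $S(n,m)$, the claimed bound~\eqref{eq:bin_sum} is therefore equivalent to
\[
\sum_{i\in I}(a_i - m)^2 \ge n(m - m^-)(m^+ - m),
\]
carrying the same equality condition. This reduction isolates the genuine content: the extra term $\tfrac{n(m-m^-)(m^+-m)}{2}$ in $S(n,m)$ is exactly half of the right-hand side above.

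The key step is then the elementary identity, valid for every real $a$ with $\alpha := m - m^-$ and $\beta := m^+ - m$,
\[
(a - m)^2 = (a - m^-)(a - m^+) - (\alpha - \beta)(a - m) + \alpha\beta.
\]
Summing over $i \in I$, the middle term is a constant multiple of $\sum_i (a_i - m) = 0$ and hence vanishes, leaving
\[
\sum_{i\in I}(a_i - m)^2 = \sum_{i\in I}(a_i - m^-)(a_i - m^+) + n(m - m^-)(m^+ - m).
\]
Now integrality does the work: each $a_i \in \Z$, while the open interval $(m^-, m^+)$ contains no integer, so the two factors $a_i - m^-$ and $a_i - m^+$ share the same sign and their product is nonnegative, vanishing precisely when $a_i \in \{m^-, m^+\}$. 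Dropping the nonnegative sum yields the desired inequality, and shows equality holds if and only if $a_i \in \{m^-, m^+\}$ for all $i \in I$.

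I expect no serious obstacle here; the only points requiring care are arranging the per-term identity so that the linear cross term cancels against $\sum_i(a_i - m) = 0$, and checking that the resulting constant matches the correction term of $S(n,m)$. The degenerate case $m \in \Z$ (where $m^- = m^+$) is subsumed automatically, since then the right-hand side is $0$ and the statement reduces to Jensen's inequality with equality iff all $a_i = m$. As an alternative, I could run a smoothing argument: among the finitely many nonnegative integer collections with fixed sum, replacing any pair with $a_i \ge a_j + 2$ by $(a_i - 1, a_j + 1)$ strictly decreases $\sum_i\binom{a_i}{2}$, forcing the minimizer to have all $a_i \in \{m^-, m^+\}$; this is conceptually transparent but then requires separately evaluating the minimizing configuration, so I prefer the identity-based proof above.
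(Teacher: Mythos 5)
Your proof is correct, and it takes a genuinely different route from the paper's. The paper argues by smoothing: if $a_i - a_j > 1$ for some $i \neq j$, replacing $(a_i, a_j)$ by $(a_i - 1, a_j + 1)$ strictly decreases $\sum_i \binom{a_i}{2}$, so for fixed $n$ and $m$ the sum is minimized exactly when all $a_i \in \{m^-, m^+\}$; it then evaluates the sum at such a configuration (counting, as in Lemma~\ref{lm:k-k+}, that $n(m^+ - m)$ of the $a_i$ equal $m^-$ and $n(m - m^-)$ equal $m^+$) to see the minimum is $S(n,m)$ --- this is precisely the ``alternative'' you sketch and set aside at the end. Your identity-based argument collapses both steps into one computation: after the reduction $\sum_i \binom{a_i}{2} = \frac{1}{2}\sum_i (a_i - m)^2 + n\binom{m}{2}$, your per-term identity (which I verified: with $\alpha = m - m^-$, $\beta = m^+ - m$ one has $(a-m)^2 - (a-m^-)(a-m^+) = (\beta - \alpha)(a - m) + \alpha\beta$, and the linear term sums to zero) exhibits the deficit as
\[
\sum_{i \in I} \binom{a_i}{2} - S(n, m) = \frac{1}{2}\sum_{i \in I} (a_i - m^-)(a_i - m^+),
\]
a sum of terms that are nonnegative because no integer lies in the open interval $(m^-, m^+)$, and that vanish precisely when $a_i \in \{m^-, m^+\}$. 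This buys you a few things: the inequality and its equality condition drop out simultaneously and term by term, there is no need to separately identify and evaluate the extremal configuration, and the argument applies verbatim to any real values avoiding the open interval $(m^-, m^+)$, not just integers. You also handle the degenerate case $m \in \Z$ correctly, where $\alpha\beta = 0$ and the statement reduces to the trivial $\sum_i (a_i - m)^2 \geq 0$. What the paper's exchange argument buys in return is conceptual transparency about why balanced configurations are extremal, and a direct reuse of the counting pattern from Lemma~\ref{lm:k-k+}; but as a self-contained proof, yours is tighter.
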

\begin{proof}
We will first prove that, for fixed $n$ and $m$, the left hand side of~\eqref{eq:bin_sum} is minimized precisely when $a_i \in \{m^-, m^+\}$ for all $i \in I$, and then show that in the latter case the equality holds.

Suppose that $a_i - a_j > 1$ for some $i \neq j$, then the left hand side of~\eqref{eq:bin_sum} would strictly decrease if we replaced $a_i$ and $a_j$ with $a_i - 1$ and $a_j + 1$ since
\[
\binom{a_i - 1}{2} + \binom{a_j + 1}{2} - \binom{a_i}{2} - \binom{a_j}{2} = a_j - (a_i - 1) < 0.
\]
It follows that the sum is minimal if and only if $|a_i - a_j| \leq 1$ for all $i \neq j$, which is equivalent to $a_i \in \{m^-, m^+\}$ for all $i \in I$.

Now, suppose that $a_i \in \{m^-, m^+\}$ for all $i \in I$.
If $m$ is an integer, then all $a_i$ are equal to $m$, and so
\[
\sum_{i \in I} \binom{a_i}{2} = n\binom{m}{2} = S(n, m).
\]
If $m$ is not an integer, then, similar to Lemma~\ref{lm:k-k+}, there are $n(m^+ - m)$ elements equal to $m^-$ and $n(m - m^-)$ elements equal to $m^+$.
Thus
\[
\sum_{i \in I} \binom{a_i}{2} = n(m^+ - m)\binom{m^-}{2} + n(m - m^-)\binom{m^+}{2} = S(n, m).\qedhere
\]
\end{proof}

The next theorem gives a necessary condition for the existence of the column design of a near triple array, and thus for the existence of the near triple array itself.
Note that the theorem can be rephrased as a more general necessary condition for the existence of a block design, without connection to a near triple array.
In particular, it can be viewed as a special case of Theorem 9 from~\cite{MBMUD}.
Alternatively, it can be derived from an even more general identity proven by Tsuji~\cite{tsujiIntersectionNumbers0Designs1994}. Nevertheless, we give a self-contained elementary proof, adapted to this particular situation.

\begin{theorem}\label{thm:dual_bd}
Let $T$ be an $(r \times c, v)$-near triple array with parameters $e, \lrc, \lrr, \lcc$.
Let $\mu_c$ be the average number of times a pair of symbols appears together in a column, that is,
\[
\mu_c := \frac{\binom{r}{2}c}{\binom{v}{2}} = \frac{e(r - 1)}{v - 1}.
\]
Define $\mu_c^- := \lfloor \mu_c \rfloor$, $\mu_c^+ := \lceil \mu_c \rceil$.
Then
\begin{equation}\label{eq:dual_bd}
S( \binom{c}{2}, \lcc ) \geq S( \binom{v}{2}, \mu_c ).
\end{equation}
Moreover, equality holds in~\eqref{eq:dual_bd} if and only if each pair of symbols is covered by $\mu_c^-$ or $\mu_c^+$ columns.
\end{theorem}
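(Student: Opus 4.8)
The plan is to express both sides of~\eqref{eq:dual_bd} in terms of a single ``sum of binomial coefficients'' quantity, computed in two different ways, and then to apply Lemma~\ref{lm:bin_sum} at each end. To set up, note that since $T$ is binary, each of its $c$ columns contains exactly $r$ distinct symbols, so the number of unordered symbol pairs occurring together in a fixed column is $\binom{r}{2}$. Writing $b_{\{x,y\}}$ for the number of columns containing both symbols $x$ and $y$, the collection $\{b_{\{x,y\}}\}$ consists of $\binom{v}{2}$ non-negative integers whose sum is $c\binom{r}{2}$ and whose average is therefore exactly $\mu_c$.

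The key step is a double-counting identity. I would count configurations consisting of a pair of distinct symbols $\{x,y\}$ together with a pair of distinct columns $\{i,j\}$ such that both $x$ and $y$ lie in $C_i$ as well as in $C_j$. Fixing the pair of columns first, the symbols common to columns $i$ and $j$ are exactly those of $C_i \cap C_j$, so the count is $\sum_{1 \le i < j \le c}\binom{|C_i \cap C_j|}{2}$. Fixing the pair of symbols first, the columns containing both $x$ and $y$ number $b_{\{x,y\}}$, so the count is $\sum_{\{x,y\}}\binom{b_{\{x,y\}}}{2}$. Hence these two sums are equal.

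To conclude, I would invoke Lemma~\ref{lm:bin_sum} on each sum. Because $T$ is a near triple array, every intersection size $|C_i \cap C_j|$ equals $\lcc^-$ or $\lcc^+$, so the equality clause of Lemma~\ref{lm:bin_sum} gives $\sum_{i<j}\binom{|C_i \cap C_j|}{2} = S(\binom{c}{2}, \lcc)$ exactly. Applying the inequality of the same lemma to the collection $\{b_{\{x,y\}}\}$, which has average $\mu_c$, gives $\sum_{\{x,y\}}\binom{b_{\{x,y\}}}{2} \ge S(\binom{v}{2}, \mu_c)$. Chaining the equality, the double-counting identity, and this last inequality yields~\eqref{eq:dual_bd}. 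Since the first two relations are exact equalities, equality in~\eqref{eq:dual_bd} is equivalent to equality in the final step, which by the equality clause of Lemma~\ref{lm:bin_sum} occurs precisely when every $b_{\{x,y\}}$ lies in $\{\mu_c^-, \mu_c^+\}$, that is, when each pair of symbols is covered by $\mu_c^-$ or $\mu_c^+$ columns.

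Once the double-counting identity is in hand the argument is routine, and I expect the only delicate point to be the asymmetry in the two applications of Lemma~\ref{lm:bin_sum}: the near triple array hypothesis pins the column-column intersection sizes to two consecutive values, yielding equality on the left, whereas no such restriction is imposed on the symbol-pair covering numbers, so only the lower bound is available on the right. Recognizing that this asymmetry is exactly what determines the direction of~\eqref{eq:dual_bd} is the conceptual heart of the proof.
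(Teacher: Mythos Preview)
Your proof is correct and follows essentially the same approach as the paper: both establish the double-counting identity $\sum_{i<j}\binom{|C_i\cap C_j|}{2}=\sum_{\{x,y\}}\binom{b_{\{x,y\}}}{2}$ and then apply Lemma~\ref{lm:bin_sum} twice, once with equality (using the near triple array hypothesis on column intersections) and once as an inequality. Your observation about the asymmetry of the two applications is exactly the point the paper's argument hinges on.
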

\begin{proof}
For $1 \leq i < j \leq c$, let $l_{ij}$ be the number of common symbols between columns $i$ and $j$ of $T$, and for $1 \leq a < b \leq v$, let $m_{ab}$ be the number of columns containing both symbols $a$ and $b$.
Counting in two ways the number of times a pair of symbols appears together in a pair of columns, we get 
\begin{equation}\label{eq:dblcnt_bd}
\sum_{1 \leq i < j \leq c} \binom{l_{ij}}{2} = \sum_{1 \leq a < b \leq v} \binom{m_{ab}}{2}.
\end{equation}
Applying Lemma~\ref{lm:bin_sum} two times to 
both sides of~\ref{eq:dblcnt_bd},
and taking into account that $l_{ij} \in \{\lcc^-, \lcc^+\}$ for all $i < j$ since $T$ is a near triple array, we get
\begin{equation}\label{eq:2_bin_sums}
\sum_{1 \leq i < j \leq c} \binom{l_{ij}}{2} = S(\binom{c}{2}, \lcc)\quad \text{and} \quad \sum_{1 \leq a < b \leq v} \binom{m_{ab}}{2} \geq S( \binom{v}{2}, \mu_c ).
\end{equation}
Combining~\eqref{eq:dblcnt_bd} and~\eqref{eq:2_bin_sums} together gives~\eqref{eq:dual_bd}.
Finally, again by Lemma~\ref{lm:bin_sum}, equality holds in~\ref{eq:dual_bd} if and only if $m_{ab} \in \{\mu_c^-, \mu_c^+\}$ for all $a < b$.
\end{proof}

\begin{table}[!ht]
\begin{center}
\begin{tabular}{|c|c||l|l|l|l|l||r|r||c|}
\hline
\rule{0pt}{10pt}
$r \times c$ & $v$ & $e$ & $\lrc$ & $\lrr$ & $\lcc$ & $\mu_c$ & $S( \binom{c}{2}, \lcc )$ & $S( \binom{v}{2}, \mu_c )$ & Comment \\[2pt]
\hline
$5 \times 9$ & $11$ & $4.09...$ & $4.11...$ & $7$ & $1.94...$ & $1.63...$ & $34$ & $35$ & \ref{nonta_bd_2k-1},\ref{nonta_bd_s}\\
\hline
$6 \times 9$ & $11$ & $4.90...$ & $4.92...$ & $7.06...$ & $2.94...$ & $2.45...$ & $104$ & $105$ & \ref{nonta_bd_2k}\\
\hline
$6 \times 13$ & $16$ & $4.875$ & $4.89...$ & $10.13...$ & $1.94...$ & $1.625$ & $74$ & $75$ & \\
\hline
$6 \times 14$ & $16$ & $5.25$ & $5.28...$ & $12$ & $1.97...$ & $1.75$ & $89$ & $90$ & \ref{nonta_bd_s}\\
\hline
$7 \times 13$ & $15$ & $6.06...$ & $6.07...$ & $11$ & $2.96...$ & $2.6$ & $228$ & $231$ & \ref{nonta_bd_2k-1}\\
\hline
$7 \times 19$ & $22$ & $6.04...$ & $6.05...$ & $16$ & $1.96...$ & $1.72...$ & $165$ & $168$ & \ref{nonta_bd_s}\\
\hline
$7 \times 20$ & $22$ & $6.36...$ & $6.4$ & $18$ & $1.98...$ & $1.81...$ & $188$ & $189$ & \ref{nonta_bd_s}\\
\hline
$8 \times 13$ & $15$ & $6.93...$ & $6.94...$ & $11.03...$ & $3.96...$ & $3.46...$ & $459$ & $462$ & \ref{nonta_bd_2k}\\
\hline
$8 \times 18$ & $20$ & $7.2$ & $7.22...$ & $16$ & $2.92...$ & $2.65...$ & $437$ & $438$ & \\
\hline
$9 \times 17$ & $19$ & $8.05...$ & $8.05...$ & $15$ & $3.97...$ & $3.57...$ & $804$ & $810$ & \ref{nonta_bd_2k-1}\\
\hline
$10 \times 17$ & $19$ & $8.94...$ & $8.95...$ & $15.02...$ & $4.97...$ & $4.47...$ & $1344$ & $1350$ & \ref{nonta_bd_2k}\\
\hline
$15 \times 19$ & $22$ & $12.95...$ & $12.95...$ & $16.22...$ & $9.96...$ & $8.63...$ & $7641$ & $7644$ & \\
\hline
\end{tabular}
\caption{Parameter sets with $r \leq c \leq 20$ for which no near triple arrays exist due to Theorem~\ref{thm:dual_bd}.
The column Comment contains (x) if the parameter set belongs to series (x) of Corollary~\ref{cor:nonta_bd}.}
\label{tbl:nonta_dual_bd}
\end{center}
\end{table}

Among the parameter sets with $3 \leq r, c \leq 20$ and $\max(r, c) \leq v \leq rc$, there are just 12 for which the inequality~\eqref{eq:dual_bd} does not hold.
We present these parameter sets in Table~\ref{tbl:nonta_dual_bd}.
Among the remaining parameters sets in this range, there are 18777 for which the inequality~\eqref{eq:dual_bd} is strict, and 19689 for which equality holds.
It should be mentioned though that for all but 418 of the latter 19689 parameter sets, both sides of~\eqref{eq:dual_bd} are equal to zero, that is, both $\lcc \leq 1$ and $\mu_c \leq 1$ hold.

In the following corollary, we present a collection of examples of families of parameter sets where Theorem~\ref{thm:dual_bd} implies non-existence of near triple arrays.
This collection of examples is by no means exhaustive.

\begin{corollary}\label{cor:nonta_bd}
There are no near triple arrays for the following parameter sets:
\begin{enumerate}[(a)]
    \item\label{nonta_bd_2k-1} $((2k - 1) \times (4k - 3), 4k - 1)$ for $k \geq 3$,
    \item\label{nonta_bd_2k} $(2k \times (4k - 3), 4k - 1)$ for $k \geq 3$,
    \item\label{nonta_bd_s} $(k \times (\binom{k}{2} - s), \binom{k}{2} + 1)$ for positive integers $k, s$ with $1 \leq s \leq \frac{(k - 1)(k - 2)}{2k}$.
\end{enumerate}
\end{corollary}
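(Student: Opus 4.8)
The plan is to derive each family as a failure of the necessary condition~\eqref{eq:dual_bd} in Theorem~\ref{thm:dual_bd}: if a candidate parameter set satisfies $S(\binom{c}{2},\lcc) < S(\binom{v}{2},\mu_c)$, then no $(r\times c,v)$-near triple array exists. So the whole argument reduces to evaluating the two sides of~\eqref{eq:dual_bd} in closed form for each family and checking the strict inequality. Both sides are completely determined by the parameters: the left-hand side because, for a near triple array, all column intersection sizes lie in $\{\lcc^{-},\lcc^{+}\}$, so by the equality case of Lemma~\ref{lm:bin_sum} the sum $\sum_{i<j}\binom{l_{ij}}{2}$ equals exactly $S(\binom{c}{2},\lcc)$; the right-hand side is read straight off the definition of $S$ once $\mu_c$ and its integer neighbours are known.

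For each family I would first compute $e=rc/v$ with its floor and ceiling, then use Lemma~\ref{lm:k-k+} to get the replication distribution, and finally obtain $\binom{c}{2}\lcc=\sum_{i<j}l_{ij}=\sum_{s}\binom{r_s}{2}$, the identity behind~\eqref{eq:dblcnt_bd}. For family~\ref{nonta_bd_2k-1} this gives $e=2k-2+\tfrac{1}{4k-1}$, with $4k-2$ symbols of replication $2k-2$ and one of replication $2k-1$; for family~\ref{nonta_bd_2k} the split is reversed, with a single symbol of replication $2k-2$ and $4k-2$ of replication $2k-1$. A short division then yields $\lcc=k-1-\tfrac{1}{8k-6}$ (resp.\ $\lcc=k-\tfrac{1}{8k-6}$) and $\mu_c=k-\tfrac{3(2k-1)}{4k-1}$ (resp.\ $\mu_c=k-\tfrac{2k}{4k-1}$), pinning down the integer parts. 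Substituting into $S$ and simplifying, both families produce $S(\binom{v}{2},\mu_c)-S(\binom{c}{2},\lcc)=\binom{k-1}{2}$, which is positive for $k\ge 3$, so~\eqref{eq:dual_bd} fails.

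Family~\ref{nonta_bd_s} is handled the same way but with the extra parameter $s$. Writing $K=\binom{k}{2}$, one finds $\mu_c=2-\tfrac{2(s+1)}{K+1}$ and, after computing the replication split ($k(s+1)$ symbols of replication $k-1$ and the rest of replication $k$), that $\binom{c}{2}\lcc=\sum_{i<j}l_{ij}=K(K-2s-1)$. Here both $\lcc$ and $\mu_c$ fall strictly between $1$ and $2$, so $S(\binom{v}{2},\mu_c)$ counts the symbol-pairs covered twice while $S(\binom{c}{2},\lcc)$ counts the column-pairs meeting in two symbols; a direct subtraction gives $S(\binom{v}{2},\mu_c)-S(\binom{c}{2},\lcc)=\tfrac{1}{2}s(s+1)=\binom{s+1}{2}>0$.

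The main obstacle is not the final algebra but correctly identifying the integer neighbours $\lcc^{-},\lcc^{+}$ and $\mu_c^{-},\mu_c^{+}$, since the value of $S$ depends on them. For families~\ref{nonta_bd_2k-1} and~\ref{nonta_bd_2k} this is settled by the polynomial divisions above. For family~\ref{nonta_bd_s} it is exactly where the hypothesis $1\le s\le\tfrac{(k-1)(k-2)}{2k}$ enters: whereas $\lcc<2$ and $\mu_c<2$ hold automatically, the upper bound on $s$ is what keeps both averages strictly above $1$ (for $\lcc$ this reduces to $K(K-1)>s(2K+s+1)$, which the bound guarantees), so that across the whole admissible range of $s$ both $\lcc$ and $\mu_c$ lie in $(1,2)$ and the clean ``count the $2$'s'' evaluation of $S$ is valid. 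I would therefore verify $1<\lcc<2$ and $1<\mu_c<2$ first, and only then carry out the closed-form evaluation of the two sides of~\eqref{eq:dual_bd}.
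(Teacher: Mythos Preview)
Your proposal is correct and follows essentially the same route as the paper: in each case, compute $e$, $\lcc$, and $\mu_c$, pin down their integer neighbours, and verify that $S(\binom{c}{2},\lcc) - S(\binom{v}{2},\mu_c)$ equals $-\binom{k-1}{2}$ in (a) and (b) and $-\binom{s+1}{2}$ in (c), so that Theorem~\ref{thm:dual_bd} forbids a near triple array. The only cosmetic difference is that you obtain $\binom{c}{2}\lcc$ directly from the replication distribution via $\sum_s\binom{r_s}{2}$, whereas the paper goes through $\lrc$ and Lemma~\ref{lm:avg}; both paths lead to the same values. Your handling of (c), checking that the hypothesis on $s$ forces $1<\lcc,\mu_c<2$ so that $S$ literally counts pairs meeting twice, matches the paper's monotonicity argument for the same purpose.
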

\begin{proof}[Proof sketch]
In each case we use Lemma~\ref{lm:avg} to calculate the parameters, and then apply Theorem~\ref{thm:dual_bd}.
Since the calculations are rather tedious, we omit the details and for each identity show only the end result.
All identities were obtained via symbolic computation in SageMath~\cite{sagemath}.
\begin{enumerate}[(a)]
\item We calculate $e = 2k - 2 + \frac{1}{4k - 1}$, thus $e^- = 2k - 2$, $e^+ = 2k - 1$ and $\lrc = 2k - 2 + \frac{1}{4k - 3}$.
Then $\lcc = k - 1 - \frac{1}{2(4k - 3)}$, so $\lcc^- = k - 2$, $\lcc^+ = k - 1$.
Also, $\mu_c = k - 2 + \frac{2k + 1}{4k - 1}$, so $\mu_c^- = k - 2$, $\mu_c^+ = k - 1$.
Finally, $S( \binom{c}{2}, \lcc ) - S( \binom{v}{2}, \mu_c ) = -\binom{k - 1}{2} < 0$.
\item We calculate $e = 2k - 1 - \frac{1}{4k - 1}$, thus $e^- = 2k - 2$, $e^+ = 2k - 1$ and $\lrc = 2k - 1 - \frac{k - 1}{(4k - 3)k}$.
Then $\lcc = k - \frac{1}{2(4k - 3)}$, so $\lcc^- = k - 1$, $\lcc^+ = k$.
Also, $\mu_c = k - 1 + \frac{2k + 1}{4k - 1}$, so $\mu_c^- = k - 1$, $\mu_c^+ = k$.
Finally, $S( \binom{c}{2}, \lcc ) - S( \binom{v}{2}, \mu_c ) = -\binom{k - 1}{2} < 0$.
\item We calculate $e = k - \frac{2k(s + 1)}{k^2 - k + 2} \geq k - 1$, with equality precisely when $s = \frac{(k - 1)(k - 2)}{2k}$.
If the equality holds, then $e = \lrc = k - 1$, and otherwise $e^- = k - 1$, $e^+ = k$ and $\lrc = k - \frac{2(k - 1)(s + 1)}{k^2 - k - 2s}$.
In both cases, $\lcc = 2 - \frac{8s(s + 1)}{(k^2 - 2k - 2s)(k^2 - 2k - 2s - 2)}$.
Also, $\mu_c = 2 - \frac{4(s + 1)}{k^2 - k + 2}$.
These expressions for $\lcc$ and $\mu_c$ are monotone in $s$ and are equal to, respectively, $2 - \frac{2}{k^2 + 1} > 1$ and $2 - \frac{2}{k} > 1$, when $s = \frac{(k - 1)(k - 2)}{2k}$, thus $\lcc^- = 1$, $\lcc^+ = 2$, $\mu_c^- = 1$, $\mu_c^+ = 2$.
Finally, $S( \binom{c}{2}, \lcc ) - S( \binom{v}{2}, \mu_c ) = -\binom{s + 1}{2} < 0$.\qedhere
\end{enumerate}
\end{proof}

In Table~\ref{tbl:nonta}, we observed that there is no $(4 \times 11, 13)$-near triple array.
In Theorem~\ref{thm:nonta_column}, we generalize this non-existence result to a larger family of parameter sets.
Note that this result, similar to Theorem~\ref{thm:dual_bd}, can be rephrased more generally: what we actually prove is that for the given parameters there does not exist a block design corresponding to the column design of a near triple array.
The condition $r \geq 4$ is necessary, as illustrated by the existence of the two near triple arrays given in Figure~\ref{fig:nonta_column_tight}.

\begin{figure}[!ht]
\begin{center}
    \begin{tabular}{c c c}
	\begin{tabular}{|*{5}{c}|}
    \hline
    0 & 1 & 2 & 3 & 4\\
    1 & 2 & 5 & 4 & 6\\
    5 & 6 & 3 & 1 & 0\\
    \hline
    \end{tabular}
    & &
    \begin{tabular}{|*{5}{c}|}
    \hline
    0 & 1 & 2 & 3 & 4\\
    1 & 2 & 3 & 5 & 6\\
    6 & 5 & 0 & 4 & 2\\
    \hline
    \end{tabular}
    \end{tabular}
	\end{center}
    \caption{The two non-isotopic $(3 \times 5, 7)$-near triple arrays.}
	\label{fig:nonta_column_tight}
\end{figure}

\begin{theorem}\label{thm:nonta_column}
For $r \geq 4$, there are no $(r \times (r(r - 1) - 1), r(r - 1) + 1)$-near triple arrays.
\end{theorem}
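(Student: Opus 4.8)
I would prove the stronger statement flagged in the surrounding text: there is no binary near equireplicate $r \times c$ row-column design on $v$ symbols with $c = r(r-1)-1$ and $v = r(r-1)+1$ in which every two columns share at most one symbol. This suffices, since for these parameters one computes, writing $N := r(r-1)$ and using Lemma~\ref{lm:avg} and Lemma~\ref{lm:k-k+}, that $e = r - \tfrac{2r}{N+1} \in (r-1,r)$, that there are $v_- = 2r$ symbols of replication $r-1$ and $v_+ = N-2r = r^2-3r+1$ symbols of replication $r$, and that $\lcc = 1 - \tfrac{2}{(N-1)(N-2)} \in (0,1)$, so $\lcc^- = 0$ and $\lcc^+ = 1$. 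Hence in any near triple array here two columns share $0$ or $1$ symbol, and two columns share at most one symbol exactly when every pair of symbols lies in at most one common column, i.e. the columns form a packing.

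Next I would record the local structure forced by the packing. A symbol of replication $k$ lies in $k$ columns, each contributing $r-1$ further symbols, all distinct; so it shares a column with exactly $k(r-1)$ symbols and fails to share one with $v-1-k(r-1)$ symbols. Since $v-1 = N = r(r-1)$, a symbol of replication $r$ shares a column with every other symbol, while a symbol of replication $r-1$ fails to do so with exactly $r-1$ symbols. Letting $H$ be the graph on the $v$ symbols whose edges are the pairs \emph{not} covered by any column, this shows the $v_+$ symbols of replication $r$ are isolated in $H$ and the $v_- = 2r$ symbols of replication $r-1$ induce an $(r-1)$-regular subgraph; in particular $H$ is an $(r-1)$-regular graph on $2r$ vertices.

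The heart of the argument is a rank bound. Let $A$ be the $v \times c$ symbol-column incidence matrix and set $M := AA^{T}$. The diagonal of $M$ records the replication numbers and the off-diagonal entries are the covering numbers, which are $0$ or $1$; concretely $M$ is determined by $r$, the split into full and special symbols, and the adjacency matrix of $H$. I would diagonalise $M$ by decomposing $\R^{v}$ into full-symbol and special-symbol coordinates and using the eigenvectors of $H$: vectors supported on full symbols and orthogonal to the all-ones vector give eigenvalue $r-1$; each non-Perron eigenvalue $\theta$ of $H$ (with eigenvector orthogonal to $\mathbf 1$ on the special block, extended by $0$) gives eigenvalue $r-2-\theta$; and the two-dimensional space spanned by the indicator vectors of the full and special symbols yields a $2\times 2$ block of determinant $r^{2}$, hence with two nonzero eigenvalues. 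Since $A$ has only $c = v-2$ columns, $\operatorname{rank} M = \operatorname{rank} A \le v-2$, so $M$ has nullity at least $2$; as the eigenvalue $r-1$ and the two block eigenvalues are nonzero, at least two of the eigenvalues $r-2-\theta$ must vanish, i.e. the value $r-2$ occurs at least twice among the non-Perron eigenvalues of $H$.

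Finally I would reach a contradiction from the spectral moments of $H$. Being $(r-1)$-regular on $2r$ vertices, $H$ has Perron eigenvalue $r-1$, and its remaining $2r-1$ eigenvalues $\theta_2,\dots,\theta_{2r}$ satisfy $\sum_i \theta_i = -(r-1)$ and $\sum_i \theta_i^{2} = 2|E(H)| - (r-1)^2 = r^{2}-1$. Writing $\psi_i := (r-2)-\theta_i$, these give $\sum_i \psi_i = 2r^{2}-4r+1$ and $\sum_i \psi_i^{2} = 2r^{3}-6r^{2}+6r-1$, while the rank bound forces at least two $\psi_i$ to be zero. Cauchy--Schwarz on the at most $2r-3$ nonzero $\psi_i$ gives $\sum_i \psi_i^{2} \ge (\sum_i \psi_i)^{2}/(2r-3)$, which after clearing denominators reads $(2r-3)(2r^{3}-6r^{2}+6r-1) \ge (2r^{2}-4r+1)^{2}$, i.e. $r^{3}-5r^{2}+6r-1 \le 0$; but this cubic increases on $[3,\infty)$ and equals $7$ at $r=4$, so it is positive for all $r \ge 4$, a contradiction (and it equals $-1$ at $r=3$, matching the two $(3 \times 5, 7)$-near triple arrays of Figure~\ref{fig:nonta_column_tight}). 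The main obstacle is precisely that all the elementary pair-counts, such as those behind Theorem~\ref{thm:dual_bd}, come out perfectly consistent for these parameters, so the obstruction is not a counting inequality but the rank deficiency arising from there being two more symbols than columns; the real work is in converting that deficiency, through the spectrum of the regular graph $H$, into the numerical contradiction above.
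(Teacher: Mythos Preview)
Your argument is correct, and it takes a genuinely different route from the paper's proof. The paper argues directly and combinatorially: after computing that exactly one pair of columns shares no symbol, it fixes those two columns, finds a replication-$r$ symbol in the first, looks at the $r-1$ other columns containing it, observes that these columns must exhaust all $v-r=(r-1)^2$ symbols not in the first column, and then pigeonholes the $r$ symbols of the second column into these $r-1$ columns to force a pair of columns with two common symbols. This is short, elementary, and uses the exact count that only one column pair is disjoint.

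Your proof instead extracts the obstruction from linear algebra: the symbol--column incidence matrix has only $c=v-2$ columns, so $AA^{T}$ has nullity at least $2$, and you locate that nullity in the spectrum via the block decomposition governed by the $(r-1)$-regular ``missing pairs'' graph $H$ on the $2r$ low-replication symbols, forcing $H$ to have eigenvalue $r-2$ with multiplicity at least $2$; the trace identities then give a Cauchy--Schwarz inequality that fails for $r\ge 4$. This is longer and more technical but has its own advantages: it never uses that \emph{exactly one} column pair is disjoint (only that every pair meets in at most one symbol), it explains cleanly why $r=3$ escapes, and it makes transparent that the obstruction is the rank deficiency $v-c=2$ rather than any counting imbalance. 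The paper's proof is the quicker one for this specific parameter family; yours is a nice spectral alternative that might adapt to other situations where the elementary pigeonhole step is unavailable.
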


\begin{proof}
Let us assume that $(r \times (r(r - 1) - 1), r(r - 1) + 1)$-near triple arrays exist, and let $T$ be an example of such array.
Then $T$ has
\[
e = \frac{rc}{v} = \frac{r^2(r - 1) - r}{r(r - 1) + 1} = r - \frac{2r}{r(r - 1) + 1}.
\]
For $r \geq 3$ this implies that $r - 1 < e < r$, so since we assume that $r\geq4$ we get that $e^- = r - 1$ and $e^+ = r$.
By Lemma~\ref{lm:k-k+}, we get $v_- = 2r$ and $v_+ = r(r - 3) + 1$.
Then the number of pairs of cells in $T$ containing the same symbol is
\begin{align*}
v_-\binom{e^-}{2} + v_+\binom{e^+}{2} &= \frac{2r(r - 1)(r - 2)}{2} + \frac{(r(r - 3) + 1)r(r - 1)}{2}\\
&= \frac{(r(r - 1) - 1)(r(r - 1) - 2) - 2}{2} = \binom{c}{2} - 1,
\end{align*}
thus
\[
    \lcc = \frac{\binom{c}{2} - 1}{\binom{c}{2}} < 1.
\]
Moreover, there is exactly one pair of columns in $T$ with no common symbols, with every other pair of columns sharing exactly one symbol.
Without loss of generality, let the first two columns of $T$ have no symbols in common.
The first column shares one symbol with each of the remaining $c - 2$ columns.
Since $r \geq 4$, we have
\[
    c - 2 = r(r - 1) - 3 > r(r - 2) = r(e^- - 1),
\]
thus at least one symbol in the first column must have replication number $e^+$.
Without loss of generality, let this symbol be $0$, and let the remaining symbols in the first column be $1, 2, \ldots, r - 1$.
Consider the set $X$ of $e^+ - 1 = r - 1$ other columns containing the symbol $0$.
Since $\lcc < 1$, the $(r - 1)^2$ symbols other than $0$ in $X$ are all distinct and do not appear in the first column, so these symbols are in fact all the remaining symbols $r, \dots, r(r - 1) = v - 1$, each appearing once.

Now consider the $r$ symbols appearing in the second column. 
By the pigeonhole principle, one of the $r-1$ columns of $X$ must contain two of the $r$ symbols of the second column.
This contradicts $\lcc < 1$, so our assumption is wrong and the near triple array $T$ does not exist.
\end{proof}

In Theorem~\ref{thm:ex_3c}, we proved that for $c\geq6 = 3(3-1)$ there exist $(3\times c, v)$-near triple arrays for any $v\geq c$. Theorem~\ref{thm:nonta_column} shows that the bound $c \geq r(r - 1)$ is actually a necessary condition for any $r\geq4$ for the existence of $(r \times c, v)$-near triple arrays for all $v \geq c$.

%-----------------------------------------------------------------
\section{Near balanced grids}\label{sec:near_balance}
%-----------------------------------------------------------------

For a binary $r \times c$ row-column design $T$ on $v$ symbols, let $\mu$ be the average number of times a pair of symbols appears together in a row or a column, that is
\[
\mu := \frac{\binom{c}{2}r + \binom{r}{2}c}{\binom{v}{2}} = \frac{c(c - 1)r + r(r - 1)c}{v(v - 1)} = \frac{e(r + c - 2)}{v - 1}.
\]
Define $\mu^- := \lfloor \mu \rfloor$ and $\mu^+ := \lceil \mu \rceil$.
A design $T$ is called a \textit{balanced grid} if $\mu$ is an integer and for each pair of symbols the number of rows and columns containing the pair is equal to $\mu$.
Balanced grids were introduced by McSorley, Phillips, Wallis and Yucas in~\cite{mcsorleyDoubleArraysTriple2005a}.
In particular, they showed that any balanced grid is equireplicate.

Similar to how we defined near triple arrays, we define an $(r \times c, v)$-\textit{near balanced grid} as a binary (near) equireplicate $r \times c$ row-column design on $v$ symbols, in which the number of times each pair of symbols appears in a row or a column is $\mu^-$ or $\mu^+$.
Clearly, a balanced grid is a near balanced grid.

Let $e, \lrc, \lrr, \lcc$ correspond to the parameter set $(r \times c, v)$, as described in Section~\ref{sec:defs}, let $S(n,m)$ be defined as in Section~\ref{sec:comp}, and let $\mu$ be defined as above. The following theorem describes the relation between near balanced grids and near triple arrays, in terms of these quantities.

\begin{theorem}\label{thm:dual_nta-nbg}
Consider a $(r \times c, v)$ binary row column design, and define
\[
\SNTA := S( \binom{c}{2}, \lcc ) + S( \binom{r}{2}, \lrr) + S(rc, \lrc),\quad \SNBG := S( \binom{v}{2}, \mu ).
\]
Then the following holds:
\begin{enumerate}[(a)]
    \item\label{nta-nbg:nonta} If $\SNTA < \SNBG$, then there are no $(r \times c, v)$-near triple arrays, but there may be $(r \times c, v)$-near balanced grids.
    \item If $\SNTA > \SNBG$, then there are no $(r \times c, v)$-near balanced grids, but there may be $(r \times c, v)$-near triple arrays.
    \item\label{nta-nbg:duality} If $\SNTA = \SNBG$, then any $(r \times c, v)$-near triple array is an $(r \times c, v)$-near balanced grid and vice versa.
\end{enumerate}
\end{theorem}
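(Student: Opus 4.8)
The plan is to reduce all three parts to a single double-counting identity combined with two applications of Lemma~\ref{lm:bin_sum}. Write $l_{ij}^{cc}$, $l_{ij}^{rr}$ and $|R_i \cap C_j|$ for the column-column, row-row and row-column intersection sizes of the design, and for a pair of symbols $\{a,b\}$ let $p_{ab}$ and $q_{ab}$ be the number of rows and of columns, respectively, containing both $a$ and $b$, so that $\mu_{ab} := p_{ab} + q_{ab}$ counts the lines through the pair. I would first establish, for an arbitrary binary $(r\times c, v)$ design, the master identity
\[
\sum_{1 \leq i < j \leq c}\binom{l_{ij}^{cc}}{2} + \sum_{1 \leq i < j \leq r}\binom{l_{ij}^{rr}}{2} + \sum_{i=1}^{r}\sum_{j=1}^{c}\binom{|R_i \cap C_j|}{2} = \sum_{1 \leq a < b \leq v}\binom{\mu_{ab}}{2}.
\]
Each term on the left is evaluated by the same bijective count used in Theorem~\ref{thm:dual_bd}: for a fixed symbol pair $\{a,b\}$, the pairs of columns containing both are counted by $\binom{q_{ab}}{2}$, the pairs of rows by $\binom{p_{ab}}{2}$, and the (row, column) pairs having both symbols in their intersection by $p_{ab}q_{ab}$. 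Summing over $\{a,b\}$ and applying the elementary identity $\binom{p}{2}+\binom{q}{2}+pq=\binom{p+q}{2}$ collapses the three contributions into $\binom{\mu_{ab}}{2}$, which yields the identity.

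Next I would record the averages so that Lemma~\ref{lm:bin_sum} applies to both sides. Since the design is binary, every column has exactly $r$ distinct symbols and every row exactly $c$, whence $\sum_{a<b}\mu_{ab} = c\binom{r}{2} + r\binom{c}{2}$ and the average of the $\mu_{ab}$ over the $\binom{v}{2}$ pairs equals $\mu$ for \emph{every} binary design. For a near triple array (respectively a near balanced grid), which is by definition (near) equireplicate, Lemma~\ref{lm:avg} guarantees that the genuine averages of $l_{ij}^{cc}$, $l_{ij}^{rr}$ and $|R_i\cap C_j|$ coincide with the parameter quantities $\lcc$, $\lrr$ and $\lrc$ (recall $\lrc^{-}=e^{-}$, $\lrc^{+}=e^{+}$). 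Lemma~\ref{lm:bin_sum} then gives
\[
\sum_{i<j}\binom{l_{ij}^{cc}}{2} + \sum_{i<j}\binom{l_{ij}^{rr}}{2} + \sum_{i,j}\binom{|R_i \cap C_j|}{2} \geq \SNTA, \qquad \sum_{a<b}\binom{\mu_{ab}}{2} \geq \SNBG,
\]
where the first inequality is tight exactly when the design is a near triple array, and the second exactly when it is a near balanced grid.

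Combining the identity with these bounds finishes everything. For any near triple array the left inequality is an equality, so $\SNTA = \sum_{a<b}\binom{\mu_{ab}}{2} \geq \SNBG$; for any near balanced grid the right inequality is an equality, so $\SNBG = (\text{left-hand side}) \geq \SNTA$. Part (a) follows since a near triple array would force $\SNTA \geq \SNBG$, contradicting $\SNTA<\SNBG$, and part (b) is the symmetric statement. For part (c), when $\SNTA = \SNBG$ a near triple array makes $\sum_{a<b}\binom{\mu_{ab}}{2} = \SNTA = \SNBG$, so the second bound is tight and the array is a near balanced grid; the converse is identical with the two bounds interchanged. I expect the only delicate point to be the bookkeeping of which averages feed the $S(\cdot,\cdot)$ terms: the bounds of Lemma~\ref{lm:bin_sum} must be invoked with the true averages of the design, and it is precisely the near equireplicate hypothesis, through Lemma~\ref{lm:avg}, that lets us identify those with the parameter values $\lcc$, $\lrr$, $\lrc$ defining $\SNTA$. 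The algebraic collapse $\binom{p}{2}+\binom{q}{2}+pq=\binom{p+q}{2}$ is the conceptual heart of the argument, but verifying it is routine.
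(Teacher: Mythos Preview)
Your proof is correct and follows essentially the same approach as the paper: both establish the master identity equating $\sum\binom{l^{cc}}{2}+\sum\binom{l^{rr}}{2}+\sum\binom{l^{rc}}{2}$ with $\sum_{a<b}\binom{\mu_{ab}}{2}$ and then apply Lemma~\ref{lm:bin_sum} to each side, with equality characterizing near triple arrays on the left and near balanced grids on the right. The only cosmetic difference is that you split $\mu_{ab}=p_{ab}+q_{ab}$ and recombine via $\binom{p}{2}+\binom{q}{2}+pq=\binom{p+q}{2}$, whereas the paper phrases the count directly in terms of unordered pairs of lines (rows or columns) through $\{a,b\}$; these are the same count.
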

\begin{proof}
Let $T$ be a binary (near) equireplicate $r \times c$ row-column design on $v$ symbols.
Let $l^{cc}_{ij}$, $1 \leq i < j \leq c$, be the number of common symbols between columns $i$ and $j$ of $T$, let $l^{rr}_{ij}$, $1 \leq i < j \leq r$, be the number of common symbols between rows $i$ and $j$, and let $l^{rc}_{ij}$, $1 \leq i \leq r$, $1 \leq j \leq c$, be the number of common symbols between row $i$ and column $j$.
Finally, let $m_{ab}$, $1 \leq a < b \leq v$, be the number of times a row or a column contains both symbols $a$ and $b$.
Counting in two ways the number of times a pair of symbols appears together in a pair of rows and/or columns, we get
\begin{equation}\label{eq:dblcnt_nta-nbg}
\sum_{1 \leq i < j \leq c} \binom{l^{cc}_{ij}}{2} + \sum_{1 \leq i < j \leq r} \binom{l^{rr}_{ij}}{2} + \sum_{\substack{1 \leq i \leq r\\1 \leq j \leq c}} \binom{l^{rc}_{ij}}{2} = \sum_{1 \leq a < b \leq v} \binom{m_{ab}}{2}.
\end{equation}
Applying Lemma~\ref{lm:bin_sum} three times to the terms in the left hand side of~\ref{eq:dblcnt_nta-nbg}
we see that this is greater than or equal to $\SNTA$, and that it is equal to $\SNTA$ if and only if
\[
l^{cc}_{ij} \in \{\lcc^-, \lcc^+\},\quad l^{rr}_{ij} \in \{\lrr^-, \lrr^+\},\quad l^{rc}_{ij} \in \{\lrc^-, \lrc^+\}
\]
for all elements of these collections, which is equivalent to $T$ being a near triple array.
Similarly, applying Lemma~\ref{lm:bin_sum} to the right hand side of~\eqref{eq:dblcnt_nta-nbg}, this is greater than or equal to $\SNBG$, with equality precisely when $m_{ab} \in \{\mu^-, \mu^+\}$ for all $a < b$, that is, when $T$ is a near balanced grid.

This implies that if $T$ is a near triple array, then $\SNTA \geq \SNBG$, if $T$ is a near balanced grid, then $\SNTA \leq \SNBG$, and if $\SNTA = \SNBG$, then $T$ being a near triple array is equivalent to it being a near balanced grid.
\end{proof}

One nice consequence of Theorem~\ref{thm:dual_nta-nbg} is the next corollary, which provides a unified approach to several known properties of triple arrays and balanced grids.

\begin{corollary}\label{cor:ta-bg}
The following claims hold:
\begin{enumerate}[(a)]
\item\label{ta-bg:ta_lower}
Any $(r \times c, v)$-triple array with $v > \max(r, c)$ has $v \geq r + c - 1$.
\item\label{ta-bg:bd_upper}
Any $(r \times c, v)$-balanced grid has $v \leq r + c - 1$.
\item\label{ta-bg:duality}
Any $(r \times c, r + c - 1)$-triple array is an $(r \times c, r + c - 1)$-balanced grid and vice versa.
\end{enumerate}
\end{corollary}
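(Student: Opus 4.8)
The plan is to deduce all three parts from Theorem~\ref{thm:dual_nta-nbg} by comparing $\SNTA$ and $\SNBG$ as functions of the parameters $r,c,v$, the heart of the argument being a single algebraic identity. Recall that $S(n,m)\ge n\binom{m}{2}$ with equality exactly when $m\in\Z$ (noted just after the definition of $S$). Every design occurring in the statement is equireplicate (triple arrays by definition, balanced grids by~\cite{mcsorleyDoubleArraysTriple2005a}), so for each of them the averages $\lrc,\lrr,\lcc,\mu$ are given by the equireplicate formulas of Lemma~\ref{lm:equiavg} together with $\lrc=e$. I would therefore study the ``leading part''
\[
D := \binom{c}{2}\binom{\lcc}{2} + \binom{r}{2}\binom{\lrr}{2} + rc\binom{\lrc}{2} - \binom{v}{2}\binom{\mu}{2},
\]
the difference of the leading terms of $\SNTA$ and $\SNBG$.

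The main computational step, which I expect to be the only genuinely laborious point, is to show, after substituting $\lrc=e=rc/v$, $\lrr=\tfrac{c(e-1)}{r-1}$, $\lcc=\tfrac{r(e-1)}{c-1}$ and $\mu=\tfrac{e(r+c-2)}{v-1}$, that $D$ factors as
\[
D = \frac{(rc)^2\,(r+c-2)}{4\,v^2(r-1)(c-1)(v-1)}\,\bigl(v-(r+c-1)\bigr)(v-r)(v-c).
\]
I would verify this by clearing denominators, which reduces the bracketed part to a cubic in $v$ whose three roots turn out to be $r+c-1$, $r$ and $c$. Since the prefactor is strictly positive for $r,c\ge 3$ and $v\ge\max(r,c)\ge 3$, for $v>\max(r,c)$ (so that $(v-r)(v-c)>0$) the sign of $D$ equals the sign of $v-(r+c-1)$, while $D=0$ precisely when $v=r+c-1$.

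With this in hand parts~\ref{ta-bg:ta_lower} and~\ref{ta-bg:bd_upper} are short. For~\ref{ta-bg:ta_lower}, a triple array is a near triple array, so Theorem~\ref{thm:dual_nta-nbg} gives $\SNTA\ge\SNBG$; as its intersection numbers $\lrc,\lrr,\lcc$ are integers, $\SNTA$ equals its leading term, whereas $\SNBG$ always dominates its own leading term. Hence $D\ge 0$, and with $v>\max(r,c)$ the factorization forces $v\ge r+c-1$. Part~\ref{ta-bg:bd_upper} is dual: a balanced grid is a near balanced grid, so $\SNTA\le\SNBG$; here $\mu\in\Z$, so $\SNBG$ equals its leading term while $\SNTA$ dominates its own, giving $D\le 0$ and hence $v\le r+c-1$ whenever $v>\max(r,c)$. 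The remaining case $v=\max(r,c)$ is trivial, since then $v\le r+c-1$ already because $\min(r,c)\ge 1$.

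For~\ref{ta-bg:duality} I set $v=r+c-1$, so $D=0$, i.e.\ the leading parts of $\SNTA$ and $\SNBG$ coincide. Given a triple array on these parameters, $\SNTA$ equals its leading part, so $\SNTA=(\text{leading part of }\SNBG)\le\SNBG$; combined with $\SNTA\ge\SNBG$ from Theorem~\ref{thm:dual_nta-nbg} this yields $\SNTA=\SNBG$ and also that $\SNBG$ equals its leading part, forcing $\mu\in\Z$. Part~\ref{nta-nbg:duality} of Theorem~\ref{thm:dual_nta-nbg} then applies to the whole parameter set, so every near triple array here is a near balanced grid; since $\mu$ is an integer a near balanced grid is exactly a balanced grid, whence the triple array is a balanced grid. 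The converse is symmetric: for a balanced grid $\mu\in\Z$ gives $\SNBG$ equal to its leading part, so $\SNBG=(\text{leading part of }\SNTA)\le\SNTA$, which with $\SNTA\le\SNBG$ gives $\SNTA=\SNBG$ and $\SNTA$ equal to its leading part; the latter means $\lrc,\lrr,\lcc\in\Z$, so the near triple array furnished by Theorem~\ref{thm:dual_nta-nbg}\ref{nta-nbg:duality} is in fact a triple array.
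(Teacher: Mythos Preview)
Your proof is correct and follows essentially the same approach as the paper's: both define the ``leading parts'' of $\SNTA$ and $\SNBG$ (the paper calls them $\STA$ and $\SBG$), establish the same factorization of their difference (noting $e^2=(rc)^2/v^2$, your prefactor matches the paper's), and then chain the inequalities $\SNTA\ge\STA$ and $\SNBG\ge\SBG$ with Theorem~\ref{thm:dual_nta-nbg} exactly as you do. The only cosmetic difference is that the paper disposes of the degenerate cases $r=1$ or $c=1$ and then works with $r,c\ge 2$, whereas you invoke $r,c\ge 3$; since the prefactor is already positive for $r,c\ge 2$, this is harmless in the paper's standing setup.
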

\begin{proof}
The claims are trivial if $r = 1$ or $c = 1$, so from now on we assume that $r, c \geq 2$.
Let $e, \lrc, \lrr, \lcc, \mu$ correspond to the parameter set $(r \times c, v)$, and
let $\SNTA$ and $\SNBG$ be as in Theorem~\ref{thm:dual_nta-nbg}.
Define
\[
\STA := \binom{c}{2}\binom{\lcc}{2} + \binom{r}{2}\binom{\lrr}{2} + rc\binom{e}{2},\quad \SBG := \binom{v}{2}\binom{\mu}{2}.
\]
It can be shown by elementary algebraic manipulations that
\[
\STA - \SBG = \frac{(v - (r + c - 1))(r + c - 2)(v - c)(v - r)e^2}{4(v - 1)(r - 1)(c - 1)}.
\]
We verified this identity using symbolic computation in SageMath~\cite{sagemath}.
In particular, if $v > \max(r, c)$, then the sign of $\STA - \SBG$ coincides with the sign of $v - (r + c - 1)$.

Recall that, due to Remark~\ref{rem:elrc}, $\lrc \geq e$ with equality precisely when $e$ is integer.
It follows that $\SNTA \geq \STA$, with equality when $\lrr$, $\lcc$ and $e$ are integers, that is, when parameters $(r \times c, v)$ are admissible for triple arrays.
Similarly, $\SNBG \geq \SBG$, with equality when $\mu$ is integer.
If $T$ is an $(r \times c, v)$-triple array with $v > \max(r, c)$, then, using Theorem~\ref{thm:dual_nta-nbg},
\[
\STA = \SNTA \geq \SNBG \geq \SBG,
\]
so $\STA - \SBG \geq 0$, which is equivalent to $v - (r + c - 1) \geq 0$.
If additionally $v = r + c - 1$, then $\STA = \SBG$. 
In particular, $\SNBG = \SBG$ and thus $\mu$ is an integer, and $\SNTA = \SNBG$, so due to Theorem~\ref{thm:dual_nta-nbg}~\ref{nta-nbg:duality}, $T$ is a balanced grid.

Conversely, if $T$ is an $(r \times c, v)$-balanced grid, then, using Theorem~\ref{thm:dual_nta-nbg},
\[
\SBG = \SNBG \geq \SNTA \geq \STA,
\]
so $\STA - \SBG \leq 0$, which is equivalent to $v \in \{r, c\}$ or $v - (r + c - 1) \leq 0$.
If additionally $v = r + c - 1$, then $\STA = \SBG$.
In particular, $\SNTA = \STA$ and thus $\lrr$ and $\lcc$ are integers, and $\SNTA = \SNBG$, so, due to Theorem~\ref{thm:dual_nta-nbg}~\ref{nta-nbg:duality}, $T$ is a triple array.
\end{proof}

Corollary~\ref{cor:ta-bg}~\ref{ta-bg:ta_lower} corresponds to Theorem~3.2 in~\cite{mcsorleyDoubleArraysTriple2005a}, but it has also been shown earlier by Bagchi (Corollary~1 in~\cite{bagchiTwowayDesigns1998}) in 1998 and in unpublished work of Bailey and Heidtmann~\cite{bailey1994extremal} in 1994.
Corollary~\ref{cor:ta-bg}~\ref{ta-bg:bd_upper} corresponds to Theorem~4.2 in~\cite{mcsorleyDoubleArraysTriple2005a}.
We note that previous proofs of these two claims used linear algebra.
Finally, Corollary~\ref{cor:ta-bg}~\ref{ta-bg:duality} corresponds to Theorem~6.1 in~\cite{mcsorleyDoubleArraysTriple2005a} and Theorem~2.5 in~\cite{mcsorleyDoubleArraysTriple2005} combined.
Our approach to all three claims is uniform and thus more directly highlights a certain duality between triple arrays and balanced grids.
A concrete family of parameters for which this kind of duality is at play for near triple arrays and near balanced grids is given in the next corollary.

\begin{corollary}\label{cor:nta=nbg}
For $k \geq 3$ and $s \geq k - 1$, $(k \times (k - 1)s, ks)$-near triple arrays are $(k \times (k - 1)s, ks)$-near balanced grids and vice versa.
\end{corollary}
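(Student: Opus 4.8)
The plan is to reduce the whole statement to a single numerical identity and then invoke Theorem~\ref{thm:dual_nta-nbg}~\ref{nta-nbg:duality}: that part of the theorem says near triple arrays and near balanced grids coincide for a parameter set exactly when $\SNTA = \SNBG$, so it suffices to verify this equality for $r = k$, $c = (k-1)s$, $v = ks$.

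First I would compute the basic parameters. Here $e = rc/v = \frac{k(k-1)s}{ks} = k-1$ is an integer, so every such design is equireplicate and $\lrc = e = k-1$ by Remark~\ref{rem:elrc}. Lemma~\ref{lm:avg} then gives $\lrr = \frac{c(\lrc-1)}{r-1} = s(k-2)$, again an integer, and $\lcc = \frac{r(\lrc-1)}{c-1} = \frac{k(k-2)}{(k-1)s-1}$. A short computation shows that $\lcc \le 1$ precisely when $s \ge k-1$, with equality at $s=k-1$. The crucial observation is that $S(n,m)$ vanishes identically for $0 \le m \le 1$ (the two summands in its definition cancel), so the entire column contribution $S(\binom{c}{2}, \lcc)$ to $\SNTA$ is zero. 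Since $\lrc$ and $\lrr$ are integers, the remaining two terms reduce to plain binomials, leaving
\[
\SNTA = \binom{r}{2}\binom{\lrr}{2} + rc\binom{\lrc}{2} = \binom{k}{2}\binom{s(k-2)}{2} + k(k-1)s\binom{k-1}{2}.
\]

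For the right-hand side I would compute $\mu = \frac{e(r+c-2)}{v-1} = \frac{(k-1)(k-2+(k-1)s)}{ks-1}$ and verify that $\mu-(k-2) = \frac{k(k-2)+s}{ks-1} > 0$ and $(k-1)-\mu = \frac{(k-1)(s-k+1)}{ks-1} \ge 0$, so that $\mu \in (k-2, k-1]$ with $\mu^- = k-2$ and $\mu^+ = k-1$. Using the explicit value of $S$ in the two-valued case recorded in the proof of Lemma~\ref{lm:bin_sum},
\[
\SNBG = S\left(\binom{v}{2}, \mu\right) = \binom{v}{2}\left( (k-1-\mu)\binom{k-2}{2} + (\mu-(k-2))\binom{k-1}{2} \right),
\]
and substituting the fractions above turns this into an explicit polynomial in $k$ and $s$. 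I expect both $\SNTA$ and $\SNBG$ to factor as $\frac{k(k-1)(k-2)s}{4}\,(s(k-2)+2k-3)$, so that they agree. The computation is uniform in $s \ge k-1$; at the boundary $s = k-1$, where $\lcc = 1$ and $\mu = k-1$ are both integers, it specializes to the triple array $=$ balanced grid identity of Corollary~\ref{cor:ta-bg}~\ref{ta-bg:duality}.

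Having established $\SNTA = \SNBG$, Theorem~\ref{thm:dual_nta-nbg}~\ref{nta-nbg:duality} immediately yields that every $(k \times (k-1)s, ks)$-near triple array is a near balanced grid and conversely. The only genuine work is the bookkeeping of the second and third paragraphs; the step most likely to go wrong if approached naively is the simplification of $\SNTA$, and the conceptual heart of the argument is recognizing that the hypothesis $s \ge k-1$ forces $\lcc \le 1$, which annihilates the column term and is exactly what makes the two remaining polynomials collapse onto one another.
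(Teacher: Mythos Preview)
Your proof is correct and follows the same approach as the paper: compute the parameters, verify $\SNTA = \SNBG$, and apply Theorem~\ref{thm:dual_nta-nbg}~\ref{nta-nbg:duality}. Your explicit observation that $S(n,m)$ vanishes for $0 \le m \le 1$, which kills the column contribution since $s \ge k-1$ forces $\lcc \le 1$, is a nice clarification that the paper's sketch leaves implicit; note also that your value $\lrr = s(k-2)$ is correct --- the paper's sketch writes $\lrr = k-2$, evidently a typo.
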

\begin{proof}[Proof sketch]
We use Lemma~\ref{lm:avg} throughout the proof, without specific references. We omit the details of the calculations and show only the end results. All identities were obtained via symbolic computation in SageMath~\cite{sagemath}.

We have $e = k - 1$, and so $\lrc = e = k - 1$, $\lrr = k - 2$.
Additionally, $\lcc = \frac{k(k - 2)}{(k - 1)s - 1}$, $\mu = k - 1 - \frac{(k - 1)(s - k + 1)}{ks - 1}$.
If $s = k - 1$, then $\lcc = 1$, i.e. $(r \times c, v)$ are admissible for triple arrays.
Moreover, in this case $v = r + c - 1$, so the claim follows from Corollary~\ref{cor:ta-bg}~\ref{ta-bg:duality}.
Otherwise, $s > k - 1$, in which case $\lcc^- = 0$, $\lcc^+ = 1$, $\mu^- = k - 2$, $\mu^+ = k - 1$.
A direct calculation then shows that $\SNTA = \SNBG$, so the claim follows by Theorem~\ref{thm:dual_nta-nbg}~\ref{nta-nbg:duality}.
\end{proof}

Theorem~\ref{thm:dual_nta-nbg} can also be used to derive more direct non-existence results, as in the following corollary, where integrality of the parameters involved is crucial. The parameter sets covered by the corollary for $r \leq c \leq 500$ are given in Table~\ref{tbl:nonta=bg+neareq}.

\begin{corollary}\label{cor:nonta=bg+neareq}
Let $e, \lrc, \lrr, \lcc, \mu$ correspond to the parameter set $(r \times c, v)$.
Suppose that $\mu$ is an integer, $e$ is not an integer, and $\SNTA = \SNBG$, where $\SNTA$ and $\SNBG$ are defined as in Theorem~\ref{thm:dual_nta-nbg}.
Then there are no $(r \times c, v)$-near triple arrays.
\end{corollary}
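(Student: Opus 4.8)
The plan is to argue by contradiction, exploiting the duality established in Theorem~\ref{thm:dual_nta-nbg} together with the two integrality hypotheses. Suppose, for contradiction, that an $(r \times c, v)$-near triple array $T$ exists. Since by hypothesis $\SNTA = \SNBG$, part~\ref{nta-nbg:duality} of Theorem~\ref{thm:dual_nta-nbg} applies, and I would immediately conclude that $T$ is also an $(r \times c, v)$-near balanced grid.

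Next I would exploit the assumption that $\mu$ is an integer. In this case $\mu^- = \mu^+ = \mu$, so the defining property of a near balanced grid---that each pair of symbols appears together in a row or column $\mu^-$ or $\mu^+$ times---collapses to the statement that every pair appears together exactly $\mu$ times. Matching this against the definition of a balanced grid, I would conclude that $T$ is in fact a balanced grid.

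Finally, I would invoke the result of McSorley, Phillips, Wallis and Yucas, recalled earlier in this section, that every balanced grid is equireplicate. This forces $T$ to be equireplicate, so that each symbol occurs exactly $e$ times and hence $e = rc/v$ is an integer. This contradicts the hypothesis that $e$ is not an integer, completing the argument.

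The argument is essentially a short chain of implications, so there is no serious technical obstacle; the only point requiring care is the definitional step in the second paragraph, namely verifying that a near balanced grid whose parameter $\mu$ happens to be an integer coincides exactly with a balanced grid. Once this identification is made, the conclusion follows directly from the already-established duality and the classical equireplication property of balanced grids.
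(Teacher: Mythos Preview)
Your proposal is correct and follows exactly the same line of argument as the paper's proof: use Theorem~\ref{thm:dual_nta-nbg}~\ref{nta-nbg:duality} to deduce that any such near triple array would be a near balanced grid, observe that integrality of $\mu$ forces it to be a balanced grid, and then invoke the equireplication result of McSorley, Phillips, Wallis and Yucas to reach a contradiction with $e \notin \Z$. The definitional check you flag is indeed the only point needing a moment's care, and you handle it correctly.
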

\begin{proof}
Any $(r \times c, v)$-near triple array would be a balanced grid due to Theorem~\ref{thm:dual_nta-nbg} and the fact that $\mu$ is an integer.
But a balanced grid is always equireplicate (see Theorem~4.1 in~\cite{mcsorleyDoubleArraysTriple2005a}), which is not possible when $e$ is not an integer.
\end{proof}

\begin{table}[!ht]
\begin{center}
\begin{tabular}{|c|c||l|l|l|l|l||r|}
\hline
$r \times c$ & $v$ & $e$ & $\lrc$ & $\lrr$ & $\lcc$ & $\mu$ & $\SNTA$ \\
\hline
$27 \times 66$ & $78$ & $22.84...$ & $22.85...$ & $55.47...$ & $9.07...$ & $27$ & $1054053$\\
\hline
$40 \times 40$ & $65$ & $24.61...$ & $24.625$ & $24.23...$ & $24.23...$ & $30$ & $904800$\\
\hline
$42 \times 56$ & $64$ & $36.75$ & $36.75...$ & $48.83...$ & $27.30...$ & $56$ & $3104640$\\
\hline
$45 \times 45$ & $55$ & $36.81...$ & $36.82...$ & $36.63...$ & $36.63...$ & $60$ & $2628450$\\
\hline
\end{tabular}
\caption{Parameter sets with $r \leq c \leq 500$ for which there are no near triple arrays due to Corollary~\ref{cor:nonta=bg+neareq}.}
\label{tbl:nonta=bg+neareq}
\end{center}
\end{table}

\begin{table}[!ht]
\begin{center}
\begin{tabular}{|c|c||l|l|l|l|l||r|r||c|}
\hline
$r \times c$ & $v$ & $e$ & $\lrc$ & $\lrr$ & $\lcc$ & $\mu$ & $\SNTA$ & $\SNBG$ & Comment \\
\hline
$4 \times 5$ & $10$ & $2$ & $2$ & $1.66...$ & $1$ & $1.55...$ & $24$ & $25$ & \ref{nonta_nbg_4k+2},\ref{nonta_nbg_binom}\\
\hline
$4 \times 6$ & $8$ & $3$ & $3$ & $4$ & $1.6$ & $3.42...$ & $117$ & $120$ & \ref{nonta_nbg_2k}\\
\hline
$5 \times 8$ & $10$ & $4$ & $4$ & $6$ & $2.14...$ & $4.88...$ & $426$ & $430$ & \ref{nonta_nbg_2k}\\
\hline
$5 \times 9$ & $15$ & $3$ & $3$ & $4.5$ & $1.25$ & $2.57...$ & $224$ & $225$ & \ref{nonta_nbg_k-2sq},\ref{nonta_nbg_binom}\\
\hline
$6 \times 7$ & $14$ & $3$ & $3$ & $2.8$ & $2$ & $2.53...$ & $186$ & $189$ & \ref{nonta_nbg_4k+2}\\
\hline
$6 \times 8$ & $12$ & $4$ & $4$ & $4.8$ & $2.57...$ & $4.36...$ & $486$ & $492$ & \\
\hline
$6 \times 9$ & $11$ & $4.90...$ & $4.92...$ & $7.06...$ & $2.94...$ & $6.38...$ & $950$ & $951$ & \\
\hline
$6 \times 10$ & $12$ & $5$ & $5$ & $8$ & $2.66...$ & $6.36...$ & $1125$ & $1134$ & \ref{nonta_nbg_2k}\\
\hline
$7 \times 10$ & $14$ & $5$ & $5$ & $6.66...$ & $3.11...$ & $5.76...$ & $1249$ & $1260$ & \\
\hline
$8 \times 9$ & $12$ & $6$ & $6$ & $6.42...$ & $5$ & $8.18...$ & $1932$ & $1944$ & \\
\hline
$8 \times 9$ & $18$ & $4$ & $4$ & $3.85...$ & $3$ & $3.52...$ & $696$ & $702$ & \ref{nonta_nbg_4k+2}\\
\hline
$8 \times 10$ & $16$ & $5$ & $5$ & $5.71...$ & $3.55...$ & $5.33...$ & $1390$ & $1400$ & \\
\hline
$9 \times 10$ & $15$ & $6$ & $6$ & $6.25$ & $5$ & $7.28...$ & $2394$ & $2415$ & \\
\hline
\end{tabular}
\caption{Parameter sets with $r \leq c \leq 10$ for which no near triple arrays exist due to Theorem~\ref{thm:dual_nta-nbg}~\ref{nta-nbg:nonta}.
The column Comment contains (x) if the parameter set belongs to series (x) of Corollary~\ref{cor:nonta_nbg}.}
\label{tbl:nonta_dual_nta-nbg}
\end{center}
\end{table}

\begin{table}[!ht]
\begin{center}
\begin{tabular}{|c|c||l|l|l|l|l||r|r|}
\hline
$r \times c$ & $v$ & $e$ & $\lrc$ & $\lrr$ & $\lcc$ & $\mu$ & $\SNTA$ & $\mathrm{NTA}$ \\
\hline
$3 \times 6$ & $9$ & $2$ & $2$ & $3$ & $0.6$ & $1.75$ & $27$ & $$1$$\\
\hline
$3 \times 8$ & $12$ & $2$ & $2$ & $4$ & $0.42...$ & $1.63...$ & $42$ & $$2$$\\
\hline
$3 \times 10$ & $15$ & $2$ & $2$ & $5$ & $0.33...$ & $1.57...$ & $60$ & $$3$$\\
\hline
$4 \times 4$ & $8$ & $2$ & $2$ & $1.33...$ & $1.33...$ & $1.71...$ & $20$ & $$1$$\\
\hline
$4 \times 6$ & $12$ & $2$ & $2$ & $2$ & $0.8$ & $1.45...$ & $30$ & $$2$$\\
\hline
$6 \times 6$ & $9$ & $4$ & $4$ & $3.6$ & $3.6$ & $5$ & $360$ & $$696$$\\
\hline
$6 \times 6$ & $12$ & $3$ & $3$ & $2.4$ & $2.4$ & $2.72...$ & $162$ & $$48$$\\
\hline
$8 \times 8$ & $16$ & $4$ & $4$ & $3.42...$ & $3.42...$ & $3.73...$ & $624$ & $$?$$\\
\hline
$9 \times 9$ & $16$ & $5.06...$ & $5.07...$ & $4.58...$ & $4.58...$ & $5.4$ & $1440$ & $$?$$\\
\hline
$9 \times 10$ & $13$ & $6.92...$ & $6.93...$ & $7.41...$ & $5.93...$ & $9.80...$ & $3375$ & $$?$$\\
\hline
$10 \times 10$ & $20$ & $5$ & $5$ & $4.44...$ & $4.44...$ & $4.73...$ & $1700$ & $$?$$\\
\hline
\end{tabular}
\caption{Parameter sets with $r \leq c \leq 10$, not including those corresponding to triple arrays, for which $\SNTA = \SNBG$, i.e. near triple arrays are equivalent to near balanced grids due to Theorem~\ref{thm:dual_nta-nbg}~\ref{nta-nbg:duality}.
Column NTA contains the number of non-isotopic near triple arrays or $?$ if the complete enumeration has not been done.}
\label{tbl:nta=nbg}
\end{center}
\end{table}

Among all parameter sets with $3 \leq r, c \leq 20$ and $\max(r, c) \leq v \leq rc$, there are 333 with $\SNTA < \SNBG$, 37411 with $\SNTA > \SNBG$ and 734 with $\SNTA = \SNBG$.
Among the latter 734 parameter sets, for 410 of them $\SNTA > 0$ holds.
In Tables~\ref{tbl:nonta_dual_nta-nbg} and~\ref{tbl:nta=nbg}, we give examples of small parameter sets with $\SNTA < \SNBG$ and $\SNTA = \SNBG$, respectively.
In the latter table, one parameter set, $(6 \times 6, 9)$, is of particular interest.
Balanced grids with $r = c$ are also known as \textit{binary pseudo-Youden designs}.
McSorley and Phillips~\cite{mcsorleyCompleteEnumerationProperties2007} enumerated $(6 \times 6, 9)$ binary pseudo-Youden designs and reported that, up to isotopism, there are 696 of them.
Our search likewise found 696 non-isotopic near triple arrays on this parameter set, and
Theorem~\ref{thm:dual_nta-nbg} explains this.
This duality does not hold for binary pseudo-Youden designs in general: Cheng~\cite{chengFamilyPseudoYouden1981} gave a construction of $(\binom{4k + 4}{2} \times \binom{4k + 4}{2}, (4k + 3)^2)$ binary pseudo-Youden designs whenever $4k + 3$ is a prime of a prime power.
In Corollary~\ref{cor:nonta_nbg}~\ref{nonta_nbg_pyd} below we show that this construction produces near triple arrays only when $k = 0$, that is, for the parameter set $(6 \times 6, 9)$.

Looking at Corollary~\ref{cor:nta=nbg}, curiously $\SNTA < \SNBG$ holds for all parameter sets $(k \times ((k - 1)s, ks)$ with $3 \leq k \leq 1000$ and $2 \leq s < k - 1$, i.e. there are no near triple arrays on these parameters.
This seems much harder to prove in general, but we give a proof for the case $s = 2$ in the next corollary, along with further examples of families of parameters sets where Theorem~\ref{thm:dual_nta-nbg}~\ref{nta-nbg:nonta} implies non-existence of near triple arrays. As for Corollary~\ref{cor:nonta_bd}, the list is not exhaustive.

\begin{corollary}\label{cor:nonta_nbg}
There are no near triple arrays for the following parameter sets:
\begin{enumerate}[(a)]
    \item\label{nonta_nbg_2k} $(k \times 2(k - 1), 2k)$ for $k \geq 4$,
    \item\label{nonta_nbg_k-2sq} $(k \times (k - 2)^2, k(k - 2))$ for $k \geq 5$,
    \item\label{nonta_nbg_4k+2} $(2k \times (2k + 1), 4k + 2)$ for $k \geq 2$,
    \item\label{nonta_nbg_binom} $(k \times (\binom{k}{2} - 1), \binom{k + 1}{2})$ for $k \geq 4$,
    \item\label{nonta_nbg_pyd} $(\binom{4k + 4}{2} \times \binom{4k + 4}{2}, (4k + 3)^2)$ for $k \geq 1$.
\end{enumerate}
\end{corollary}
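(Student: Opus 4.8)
The plan is to derive all five non-existence statements from Theorem~\ref{thm:dual_nta-nbg}~\ref{nta-nbg:nonta}: for each family it suffices to check the single inequality $\SNTA < \SNBG$. So for each parameter set I would first use Lemma~\ref{lm:avg} to express $e,\lrc,\lrr,\lcc$ and the quantity $\mu$ as explicit rational functions of $k$, then substitute into the definitions of $\SNTA$ and $\SNBG$ and simplify. A convenient first observation is that $e=rc/v$ is an integer in every family --- equal to $k-1$, $k-2$, $k$, $k-2$ and $4(k+1)^2$ in cases \ref{nonta_nbg_2k}--\ref{nonta_nbg_pyd} respectively --- so any such array would be equireplicate, hence $\lrc=e$ and the row-and-column term collapses to $S(rc,\lrc)=rc\binom{e}{2}$.

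For the remaining terms I would use the decomposition $S(n,m)=n\binom{m}{2}+\tfrac{n}{2}(m-m^-)(m^+-m)$, in which the main term is a rational function of $m$ and only the correction term sees the integer part. This makes the integer parameters harmless: $\lrr=2(k-2)$ in \ref{nonta_nbg_2k}, $\lcc=k-1$ in \ref{nonta_nbg_4k+2}, and $\lrr=\lcc$ in the symmetric case \ref{nonta_nbg_pyd}, so the corresponding correction terms vanish or pair up. For the genuinely fractional parameters I would pin down $m^-$ and $m^+$ by bounding the relevant fraction between consecutive integers, assemble $\SNTA-\SNBG$ as one rational function of $k$, and exhibit it as strictly negative on the stated range; following the pattern of Corollary~\ref{cor:nonta_bd} I would expect a clean negative closed form, and I would verify the algebra by symbolic computation. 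For family \ref{nonta_nbg_pyd} I would also note that this non-existence is purely parametric: it holds for every $k\ge1$ regardless of whether $4k+3$ is a prime power, so it shows only that Cheng's pseudo-Youden designs fail to be near triple arrays once $k\ge1$ (the boundary case $k=0$ giving $\SNTA=\SNBG$, consistent with the $(6\times6,9)$ entry of Table~\ref{tbl:nta=nbg}).

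The main obstacle is the bookkeeping of these fractional corrections, because the floors are not polynomial in $k$: for instance in family \ref{nonta_nbg_2k} one finds $\lcc=\tfrac{k}{2}-\tfrac{k}{2(2k-3)}$, so $\lcc^-=\lfloor(k-1)/2\rfloor$ depends on the parity of $k$, and $\mu$ behaves the same way. Consequently the computation of $\SNTA-\SNBG$ must be split according to $k\bmod 2$ (and, in cases \ref{nonta_nbg_k-2sq} and \ref{nonta_nbg_binom}, possibly a slightly larger modulus) and negativity checked on each residue class separately. I expect family \ref{nonta_nbg_2k} to be the delicate one: it is precisely the $s=2$ instance of the family $(k\times(k-1)s,ks)$ of Corollary~\ref{cor:nta=nbg}, for which $\SNTA-\SNBG$ is \emph{not} negative for all $s$, so the argument must genuinely exploit $s=2$ rather than follow from a uniform bound. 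Confirming strict negativity there across both parities is the crux; the other four families follow the same template with cleaner integer parts.
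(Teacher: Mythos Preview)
Your proposal is correct and follows essentially the same approach as the paper: reduce each family to the inequality $\SNTA<\SNBG$ of Theorem~\ref{thm:dual_nta-nbg}~\ref{nta-nbg:nonta}, compute the parameters via Lemma~\ref{lm:avg}, pin down the integer parts, and obtain a closed negative expression for $\SNTA-\SNBG$ (verified symbolically). Your anticipation of the parity split in family~\ref{nonta_nbg_2k} is exactly right and matches the paper's treatment; your hedge about a modulus split in families~\ref{nonta_nbg_k-2sq} and~\ref{nonta_nbg_binom} turns out to be unnecessary, since in those cases all the relevant fractional parts lie strictly between $0$ and $1$ uniformly in $k$ (e.g.\ $\tfrac{k(k-3)}{2}$ is always an integer), so a single computation suffices.
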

\begin{proof}[Proof sketch]
Similar to Corollary~\ref{cor:nonta_bd}, we calculate parameter values using Lemma~\ref{lm:avg} and apply Theorem~\ref{thm:dual_nta-nbg}~\ref{nta-nbg:nonta}.
We again omit the details and for each identity show only the end result.
All identities were obtained via symbolic computation in SageMath~\cite{sagemath}.
\begin{enumerate}[(a)]
\item We calculate $\lrc = e = k - 1$, $\lrr = 2(k - 2)$.
Suppose that $k$ is even, so $k = 2q$ for some $q \geq 2$.
Then $\lcc  = q - 1 + \frac{3q - 3}{4q - 3}$, so $\lcc^- = q - 1$, $\lcc^+ = q$, and $\mu = 3q - 3 + \frac{q + 1}{4q - 1}$, so $\mu^- = 3q - 3$, $\mu^+ = 3q - 2$.
Then $\SNTA - \SNBG = -3\binom{q}{2} < 0$.
Now suppose that $k = 2q + 1$ for some $q \geq 2$.
Then $\lcc = q + \frac{q - 1}{4q - 1}$, so $\lcc^- = q$, $\lcc^+ = q + 1$, and $\mu = 3q - 2 + \frac{3q + 2}{4q + 1}$, so $\mu^- = 3q - 2$, $\mu^+ = 3q - 1$.
Then $\SNTA - \SNBG = -\frac{(3q + 2)(q - 1)}{2} < 0$.
\item We calculate $\lrc = e = k - 2$, $\lrr = k^2 - 6k + 9 + \frac{k - 3}{k - 1}$, so $\lrr^- = k^2 - 6k + 9$, $\lrr^+ = k^2 - 6k + 10$.
Next, $\lcc = 1 + \frac{1}{k - 1}$, i.e. $\lcc^- = 1$, $\lcc^+ = 2$, and $\mu = k - 3 + \frac{3k - 7}{k^2 - 2k - 1}$, so $\mu^- = k - 3$ and $\mu^+ = k - 2$.
Then $\SNTA - \SNBG = -\binom{k - 3}{2} < 0$.
\item We calculate $\lrc = e = k$, $\lrr = k - \frac{1}{2k - 1}$, so $\lrr^- = k - 1$, $\lrr^+ = k$, $\lcc = k - 1$, $\mu = k - 1 + \frac{2k + 1}{4k + 1}$, so $\mu^- = k - 1$, $\mu^+ = k$.
Finally, $\SNTA - \SNBG = -\binom{k}{2} < 0$.
\item We calculate $\lrc = e = k - 2$, $\lrr = \frac{k(k - 3)}{2} - 1 + \frac{2}{k - 1}$, so $\lrr^- = \frac{k(k - 3)}{2} - 1$, $\lrr^+ = \frac{k(k - 3)}{2}$.
Next, $\lcc = 1 + \frac{k^2 - 5k + 4}{k^2 - k - 4}$, so $\lcc^- = 1$, $\lcc^+ = 2$, $\mu = k - 3 + \frac{k^2 - 3k + 6}{k^2 + k - 2}$, so $\mu^- = k - 3$, $\mu^+ = k - 2$.
Finally, $\SNTA - \SNBG = -1 < 0$.
\item We calculate $\lrc = e = 4(k + 1)^2$, $\lrr = \lcc = 4(k + 1)^2 - \frac{2k + 2}{4k + 5}$, so $\lrr^- = \lcc^- = 4(k + 1)^2 - 1$, $\lrr^+ = \lcc^+ = 4(k + 1)^2$, $\mu = (4k + 5)(k + 1)$.
Finally, $\SNTA - \SNBG = -(4k + 3)(4k + 2)(k + 1)^2k < 0$.
\qedhere
\end{enumerate}
\end{proof}

%-----------------------------------------------------------------
\section{Correctness checks}\label{sec:correct}
%-----------------------------------------------------------------

In this section we describe various measures we took to verify the correctness of our computations.

We wrote a separate implementation of the search algorithm in SageMath~\cite{sagemath}, which we used to perform an independent complete enumeration of near triple arrays for all parameter sets from Tables~\ref{tbl:nta3} through~\ref{tbl:nta5} with $rc \leq 30$.
On the same parameters we also ran a simplified version of our C++ implementation with the completability procedure described in Section~\ref{ssec:compl} replaced with a simple check that the current partially filled array does not already have too large intersections of rows and/or columns, and with the canonicity checking procedure described in Section~\ref{ssec:canon} replaced with a brute-force search over all permutations of rows and columns.
In both cases the results matched the results obtained by our main C++ implementation.

Note that transposes of $(r \times c, v)$-near triple arrays are exactly $(c \times r, v)$-near triple arrays. For all parameter sets from Tables~\ref{tbl:nta3} through~\ref{tbl:nta5} with $rc \leq 30$, as well as for all triple array parameters in Table~\ref{tbl:triple_enum}, we ran the search both for arrays and their transposes and the results matched, as predicted.

Throughout our C++ implementation we included various sanity checks to ensure the program behaves as expected.
In particular, the completability checks described in Section~\ref{ssec:compl} ensure that any completed array obtained by the program is a near triple array.
Nevertheless, since checking that a completed array satisfies all the desired conditions is computationally cheap, we separately checked this for every found completed array.

Our computational results agree with our theoretical results: the search found examples of near triple arrays in all cases where constructions from Section~\ref{sec:exist} are applicable, and found no near triple arrays in all cases where non-existence follows from results of Sections~\ref{sec:comp} and~\ref{sec:near_balance}.

Additionally, for various classes of designs related to near triple arrays, there are previous enumeration results in the literature, and we checked that they matched the results obtained by our program, namely:

\textbf{Triple arrays.}
See Section~\ref{ssec:res_ta}.

\textbf{(Near) Youden rectangles.}
See Section~\ref{ssec:res_nyr}.

\textbf{Double arrays, sesqui arrays, transposed mono arrays and AO-arrays.}
See Section~\ref{ssec:res_int}.

\textbf{$(6 \times 6, 9)$-binary pseudo-Youden designs.} As we already discussed in Section~\ref{sec:near_balance}, Theorem~\ref{thm:dual_nta-nbg} implies that $(6 \times 6, 9)$-binary pseudo-Youden designs are precisely $(6 \times 6, 9)$-near triple arrays.
Our search found 696 non-isotopic near triple arrays on these parameters, which matches counts of binary pseudo-Youden designs given by McSorley and Phillips~\cite{mcsorleyCompleteEnumerationProperties2007}.

\textbf{Latin squares.}
Our counts of $(n \times n, n)$-near triple arrays, $n \leq 8$, match counts of $n \times n$ Latin squares by McKay~\cite{mckayIsomorphFreeExhaustiveGeneration1998} (see Table 3) and by McKay, Meynert and Myrvold~\cite{mckaySmallLatinSquares2007} (see Table II).
In addition to the results reported in Appendix~\ref{ap:nta_tables}, 
we also ran our program for the case $n=8$, and indeed found 1676267 non-isotopic $(8 \times 8, 8)$-near triple arrays, 
which matches the result in~\cite{mckaySmallLatinSquares2007}.

\textbf{Latin rectangles.}
Any $(n - 1) \times n$ Latin rectangle is a Youden rectangle, a triple array, and therefore also a near triple array.
Again, our counts of $((n - 1) \times n, n)$-near triple arrays, $n \leq 8$, match counts given by McKay~\cite{mckayIsomorphFreeExhaustiveGeneration1998} (see Table 3), in all cases reported in Appendix~\ref{ap:nta_tables}, and in the case $n = 8$, where we also found 13302311 $(7 \times 8, 8)$-near triple arrays.

%-----------------------------------------------------------------
\section{Concluding remarks}\label{sec:concl}
%-----------------------------------------------------------------

In the present paper, we gave several partial results on the following question, both positive and negative, but the picture is far from complete.
\begin{question}
    For which combinations of $r, c, v$ do near triple arrays exist?
\end{question}

We proved in Theorem~\ref{thm:ex_3c} that $(3 \times c, v)$-near triple arrays exist for any $c \geq 6 = 3(3 - 1)$ and $v \geq c$.
On the other hand, Theorem~\ref{thm:nonta_column} shows that for any given $r \geq 4$, there are no $(r \times c, c + 2)$-near triple arrays for $c = r(r - 1) - 1$. We therefore propose the following conjecture.

\begin{conjecture}
    $(r \times c, v)$-near triple arrays exist for any $c \geq r(r - 1)$ and $v \geq c$.
\end{conjecture}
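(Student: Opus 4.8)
The plan is to prove the conjecture by generalizing the inductive scheme used for $r=3$ in Theorem~\ref{thm:ex_3c}, with the organizing observation being a sharp dichotomy in $c$. By Lemma~\ref{lm:avg}~\ref{avg:lcc} we have $\lcc=\frac{r(\lrc-1)}{c-1}$, and since $\lrc\le\lrc^+=e^+\le r$ whenever $v\ge c$ (as $e=rc/v\le r$), every $(r\times c,v)$-near triple array in the relevant range satisfies $\lcc\le\frac{r(r-1)}{c-1}$. Hence for all $c\ge r(r-1)+1$ one automatically has $\lcc\le1$, which is exactly the hypothesis needed to glue pieces together using the concatenation Lemma~\ref{lm:concat}. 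I would therefore treat the \emph{nice regime} $c\ge r(r-1)+1$ by induction and deal with the single boundary column count $c=r(r-1)$ separately.

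For the nice regime, fix $r$ and induct on $c$, exactly as for $r=3$: given a target $(r\times c,v)$, split $c=c_0+c_1$ and $v=v_0+v_1$, where $(r\times c_0,v_0)$ is a small \emph{base block} and $(r\times c_1,v_1)$ is supplied by the induction hypothesis, then invoke Lemma~\ref{lm:concat}. Because concatenation only joins blocks of nearly equal average replication $e$, the induction requires, for each $r$ and each integer $e\in\{1,\dots,r\}$, a small equireplicate base block with replication $e$, with $\lcc\le1$ and (for at least one of the two glued blocks) integer $\lrr$; note that such equireplicate blocks have integer $\lrc=e$, and by Lemma~\ref{lm:avg}~\ref{avg:lcc} one can force $\lrr=\frac{c_0(e-1)}{r-1}$ to be an integer by choosing $c_0$ suitably. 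Adjacent values $e$ and $e+1$ are compatible under the hypotheses of Lemma~\ref{lm:concat}, so mixing base blocks of two consecutive replication numbers in appropriate column-proportions realizes every intermediate $v$; the extreme $e=1$ end is handled by Lemma~\ref{lm:ex_tail}, and Lemma~\ref{lm:incv} gives further freedom to raise $v$. The lower-replication blocks ($e<r$) should be constructible explicitly, generalizing the $3$-row construction of Lemma~\ref{lm:constr_3c}, so this regime reduces to exhibiting the base blocks and finitely many computer-found cases seeding each induction.

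The main obstacle is the spine $v=c$, and in particular the boundary $c=r(r-1)$. When $v=c$ the array is an $r\times c$ (near) Youden rectangle on $c$ symbols: equireplicate with each symbol appearing once in every row, so the $r$ rows are permutations of $[c]$ and the base blocks with $e=r$ are precisely such rectangles. For $c\ge r(r-1)+1$ these spine blocks have $\lcc\le1$ and can be glued, but they still must be produced from scratch for every $r$, and at $c=r(r-1)$ one computes $\lcc=\frac{r(r-1)}{r(r-1)-1}\in(1,2)$, so every pair of columns must share exactly one or two symbols and the hypothesis $\lcc\le1$ of Lemma~\ref{lm:concat} fails outright. Constructing these Youden and near-Youden rectangles uniformly in $r$ is a genuine design-theoretic question — equivalent to finding $r$ permutations of $[c]$ whose columns overlap in a controlled way — and the delicacy of such existence is already visible from Brown's nonexistence of $6\times17$ near Youden rectangles noted above. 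I expect the proof to hinge on producing these spine arrays, plausibly via near-resolvable or starter--adder constructions; once they are in hand, Lemma~\ref{lm:from_Youden} descends to one fewer column and the nice-regime induction propagates existence to all larger $c$ and all $v\ge c$.

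A complementary route would induct on $r$ rather than on $c$, building $r$-row arrays from $(r-1)$-row ones through the Latin-square complement device of Lemma~\ref{lm:complement}. At present that construction only reaches widths $c$ of order $r$, far short of the required $c\ge r(r-1)$, so pushing it into the wide regime is an alternative but equally substantial form of the same obstacle.
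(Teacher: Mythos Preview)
The statement you are attempting to prove is stated in the paper as a \emph{conjecture}, not a theorem: the paper offers no proof, only the motivation that Theorem~\ref{thm:ex_3c} settles the case $r=3$ and Theorem~\ref{thm:nonta_column} shows the bound $c\ge r(r-1)$ cannot be lowered. So there is no ``paper's own proof'' to compare against.

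Your proposal is not a proof either, and you are candid about this. What you have written is a coherent plan that correctly extends the $r=3$ inductive architecture and correctly isolates the genuine obstruction: the need for equireplicate base blocks with replication $e=r$ (i.e.\ $r\times c$ near Youden rectangles) for every $r$ and for infinitely many column counts $c$, including the boundary $c=r(r-1)$ where $\lcc>1$ and Lemma~\ref{lm:concat} is unavailable. You are right that this is a real design-theoretic problem, not a bookkeeping detail; Brown's nonexistence result for $6\times17$ shows such rectangles can fail for small $c$, and there is no known uniform construction covering all $r$ in the range $c\ge r(r-1)$. Until those spine arrays are produced, the induction cannot start, and the conjecture remains open.

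Two minor points. First, your reference to Lemma~\ref{lm:avg}~\ref{avg:lcc} for the formula $\lrr=c_0(e-1)/(r-1)$ should be to part~\ref{avg:lrr}. Second, even in the nice regime $c\ge r(r-1)+1$, supplying for every $r$ and every $e\in\{2,\dots,r-1\}$ an explicit equireplicate base block with $\lcc\le1$ and integer $\lrr$ is itself nontrivial; you gesture at generalizing Lemma~\ref{lm:constr_3c}, but no such construction is given, so this is a second gap alongside the Youden spine.
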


As we already mentioned earlier, due to Theorem~\ref{thm:dual_nta-nbg}, among all parameter sets with $3 \leq r, c \leq 20$ and $\max(r, c) \leq v \leq rc$, there are 734 for which near balanced grids are near triple arrays, and only 333 for which near balanced grids may exist which are not triple arrays. 
For comparison, on the remaining 37411 parameter sets there may exist near triple arrays, but there are no near balanced grids.
Theorem~\ref{thm:dual_nta-nbg} thus seems, empirically, to imply non-existence of near balanced grids for the majority of parameter sets.
It would be interesting to sharpen these results to a complete resolution of the following question.
\begin{question}
    For which combinations of $r, c, v$ do near balanced grids exist?
\end{question}

A more specialized question is motivated by the unresolved entries in Table \ref{tbl:nta=nbg}.
\begin{question}
    For which $r\geq 4$ do $(r \times r, 2r)$-near triple arrays exist?
\end{question}

For those parameter sets where near triple arrays do not exist, we investigated generalized triple arrays. In relation to this, we ask the following question.
\begin{question}
    Let $\omega(r \times c, v)$ be the smallest integer $\omega$ such that there exists an $(r \times c, v; \omega_{rc}, \omega_{rr}, \omega_{cc})$-generalized triple array with $\omega_{rc}, \omega_{rr}, \omega_{cc} \leq \omega$.
    How can $\omega(r \times c, v)$ be bounded in terms of $r, c, v$?
\end{question}

As an intermediate goal, we pose a related problem for the column design of a generalized triple array.
\begin{question}
    Let $\omega_c(r, c, v)$ be the smallest integer $\omega_c$ such that there exists an integer $x$ and a block design with $v$ blocks and $c$ points, with each point occurring in $r$ blocks, each block containing $\lfloor\frac{rc}{v}\rfloor$ or $\lceil\frac{rc}{v}\rceil$ points, and each pair of points covered by at least $x$ and at most $x + \omega_c - 1$ blocks.
    How can $\omega_c(r, c, v)$ be bounded in terms of $r, c, v$?
\end{question}

In Section~\ref{ssec:res_ta} we describe the structure of two rather symmetric triple arrays given in Figures~\ref{fig:ex120} and~\ref{fig:Fano}.
Our hope is that further study of these examples can lead to new constructions of triple array series.

Finally, an important application of the combinatorial objects studied in the present paper is as experimental designs. In this context, their statistical efficiency is a key property. We hope to return to this question in future work.

%-----------------------------------------------------------------
\section*{Acknowledgements}
%-----------------------------------------------------------------

The computational work was performed on resources provided by the Swedish National Infrastructure for Computing (SNIC) at High Performance Computing Center North (HPC2N).
Alexey Gordeev was supported by Kempe foundation grant JCSMK23-0058.

\bibliographystyle{abbrv}
\bibliography{main}
\newpage

\appendix

\clearpage
%-----------------------------------------------------------------
\section{Full description of the completability checking procedure}\label{ap:compl}
%-----------------------------------------------------------------

Let $T$ be a partially filled array, and let $(i_0, j_0)$ be the empty cell of $T$ to be filled in the next step of the search.
Our goal is to either deduce that $T$ cannot be completed to an $(r \times c, v)$-near triple array, or, otherwise, to find a set of symbols which may be put into the cell $(i_0, j_0)$ without directly violating any of the near triple array conditions.

Denote the set of all symbols by $V := \{0, 1, \dots, v - 1\}$.
For $i \geq 0$, let $V_i$ be the set of symbols used $i$ times in $T$.
Let $V_{may}$ be the set of symbols which may be used to fill empty cells of $T$: more precisely, $V_{may} = \bigcup_{i = 0}^{e - 1} V_i$ in the equireplicate case.
In the near-equireplicate case, using notation from Lemma~\ref{lm:k-k+}, if $|V_{e^+}| < v_+$, then $V_{may} = \bigcup_{i = 0}^{e^-} V_i$, otherwise, $V_{may} = \bigcup_{i = 0}^{e^- - 1}V_i$.

Let $v_{max}$ be the maximum symbol used so far in $T$, that is, $v_{max} \geq w$ for any symbol $w$ used in $T$, and set $V_{next} := V_{may} \cap \{0, \dots, v_{max} + 1\}$.
In order for the array obtained from $T$ by filling the cell $(i_0, j_0)$ to be canonical, the symbol put into the cell $(i_0, j_0)$ clearly has to come from $V_{next}$.

Denote by $R_i$ and $C_j$ the sets of symbols used so far in row $i$ and column $j$ of $T$, respectively.
At the start of the completability checking procedure, we define a collection of auxiliary sets as follows. For $1 \leq i \leq r$, we define 
\[
A_{R_i} := R_i,\quad B_{R_i} := \begin{cases}
R_i \cup V_{may} &\text{if row } i \text{ of } T \text{ has empty cells,}\\
R_i &\text{otherwise.}
\end{cases}
\]
Similarly, for $1 \leq j \leq c$ we define
\[
A_{C_j} := C_j,\quad B_{C_j} := \begin{cases}
C_j \cup V_{may} &\text{if column } j \text{ of } T \text{ has empty cells,}\\
C_j &\text{otherwise.}
\end{cases}
\]
Finally, for $1 \leq i \leq r$ and $1 \leq j \leq c$ we define
\[
B_{i, j} := \begin{cases}
V_{next} \setminus R_i \setminus C_j &\text{if } (i, j) = (i_0, j_0),\\
V_{may} \setminus R_i \setminus C_j &\text{if cell } (i, j) \text{ of } T \text{ is empty, but } (i, j) \neq (i_0, j_0),\\
\{T_{i, j}\} &\text{if cell } (i, j) \text{ of } T \text{ is not empty}.
\end{cases}
\]

The sets $A_{R_i}, B_{R_i}, A_{C_j}, B_{C_j}, B_{i, j}$ are the \textit{estimate sets}.
Throughout the completability checking procedure, the estimate sets may be changed, but the point of them is that we maintain the following properties throughout: if $T'$ is a near triple array completed from $T$, $R'_i$ and $C'_j$ are contents of row $i$ and column $j$ of $T'$, then
\begin{equation}\label{eq:estprop}
A_{R_i} \subseteq R'_i \subseteq B_{R_i},\quad A_{C_j} \subseteq C'_j \subseteq B_{C_j},\quad T'_{i, j} \in B_{i, j}.    
\end{equation}

In other words, $A_{R_i}$ is a set of symbols that will all appear in row $R_i$ of any completion $T'$ of $T$, and conversely, $B_{R_i}$ is a set of symbols that we cannot yet rule out as possible candidates for use in row $R_i$ of some completion $T'$ of $T$. The sets $A_{C_j}$ and $B_{C_j}$ are the analogues for columns. Finally, the set $B_{i,j}$ is a set of symbols that cannot yet be ruled out as possible candidates for use in cell $(i,j)$ in some completion $T'$ of $T$.

As long as at least one of the rules listed below can be used to either deduce the non-completability of $T$ or to update the contents of some of the estimate sets, one such rule is chosen and applied.
If the chosen rule implies that $T$ is non-completable, we stop the procedure and remove $T$ from further consideration in the search.
When none of the rules can be applied, we stop the procedure and return the estimate set $B_{i_0, j_0}$ corresponding to the empty cell $(i_0,j_0)$ of $T$ as the list of options to put into this cell in the next step of the search.

\begin{enumerate}
\item\textbf{Full content of row $i$/column $j$ determined by $A_{R_i}$/$A_{C_j}$}

\begin{enumerate}
\item If $|A_{R_i}| > c$ for some $1 \leq i \leq r$, then $T$ is non-completable.
If $|A_{R_i}| = c$, then~\eqref{eq:estprop} implies that $R'_i = A_{R_i}$ for any near triple array $T'$ completed from $T$, so we may update $B_{R_i}$ to be equal to $A_{R_i}$.
\item If $|A_{C_j}| > r$ for some $1 \leq j \leq c$, then $T$ is non-completable. 
If $|A_{C_j}| = r$, then, similarly to the previous case, we may update $B_{C_j}$ to be equal to $A_{C_j}$.
\end{enumerate}
\item\textbf{Full content of row $i$/column $j$ determined by $B_{R_i}$/$B_{C_j}$}

\begin{enumerate}
\item If $|B_{R_i}| < c$ for some $1 \leq i \leq r$, then $T$ is non-completable.
If $|B_{R_i}| = c$, then, due to~\eqref{eq:estprop}, we have $R'_i = B_{R_i}$ for any near triple array $T'$ completed from $T$.
Thus we may update $A_{R_i}$ to be equal to $B_{R_i}$.
\item If $|B_{C_j}| < r$ for some $1 \leq j \leq c$, then $T$ is non-completable.
If $|B_{C_j}| = r$, then, similarly to the previous case, we may update $A_{C_j}$ to be equal to $B_{C_j}$.
\end{enumerate}
\item\textbf{Sets $B_{R_i}$ and $B_{C_j}$ restrict set $B_{i, j}$}

For any empty cell $(i, j)$ of $T$, we may replace $B_{i, j}$ with $B_{i, j} \cap (B_{R_i} \setminus R_i) \cap (B_{C_j} \setminus C_j)$.
\item\textbf{Sets $B_{i,j}$ restrict sets $B_{R_i}$ and $B_{C_j}$}

For $1 \leq i \leq r$, we may replace $B_{R_i}$ with $B_{R_i} \cap \bigcup_{j = 1}^c B_{i, j}$.
For $1 \leq j \leq c$, we may replace $B_{C_j}$ with $B_{C_j} \cap \bigcup_{i = 1}^r B_{i, j}$.

\item\textbf{Row-column intersection and empty cells}

Let $(i, j)$ be an empty cell of $T$. 

\begin{enumerate}
\item
If $|R_i \cap C_j| + 1 > \lrc^+$, then $T$ is non-completable, since filling $(i,j)$ contributes $+1$ to the intersection between row $i$ and 
column $j$. 
If $|R_i \cap C_j| + 1 = \lrc^+$, then the symbol used in cell $(i, j)$ is the only further common symbol between row $i$ and column $j$ in any completion of $T$.
Then for any empty cell $(i, j') \neq (i, j)$ of $T$ we may replace $B_{i, j'}$ with $B_{i, j'} \setminus A_{C_j}$, and for any empty cell $(i', j) \neq (i, j)$ of $T$ we may replace $B_{i', j}$ with $B_{i', j} \setminus A_{R_i}$.

\item 
Setting
\[
l_{max} := |R_i \cap C_j| + (c - |R_i|) + (r - |C_j|) - 1,
\]
we also see that if $l_{max} < \lrc^-$, then $T$ is non-completable, since there is not enough room in the empty cells of row $R_i$ and $C_j$ to reach the required intersection size.
If $l_{max} = \lrc^-$, then each empty cell $(i, j') \neq (i, j)$ in any completion of $T$ must be filled with a symbol from $C_j$.
In other words, we may replace $B_{i, j'}$ with $B_{i, j'} \cap C_j$.
Similarly, for any empty cell $(i', j) \neq (i, j)$ of $T$ we may replace $B_{i', j}$ with $B_{i', j} \cap R_i$.
\end{enumerate}

\item\textbf{Lower bound on row-row intersection size}

Consider two rows $i$ and $j$ of $T$.
The sets
\[
X_{R_i} := (B_{R_i} \setminus A_{R_i}) \cap A_{R_j},\quad Y_{R_i} := (B_{R_i} \setminus A_{R_i} \setminus A_{R_j}) \cap B_{R_j},\quad Z_{R_i} := B_{R_i} \setminus A_{R_i} \setminus A_{R_j} \setminus B_{R_j}
\]
together with $A_{R_i}$ are pairwise disjoint and form a partition of $B_{R_i}$ as follows:
\begin{equation}\label{eq:BRipart}
B_{R_i} = A_{R_i} \sqcup X_{R_i} \sqcup Y_{R_i} \sqcup Z_{R_i}.
\end{equation}
Note that the sets $X_{R_i}$, $Y_{R_i}$, $Z_{R_i}$ depend on $A_{R_j}$ and $B_{R_j}$.
Similarly, the sets
\[
X_{R_j} := (B_{R_j} \setminus A_{R_j}) \cap A_{R_i},\quad Y_{R_j} := (B_{R_j} \setminus A_{R_j} \setminus A_{R_i}) \cap B_{R_i},\quad Z_{R_j} := B_{R_j} \setminus A_{R_j} \setminus A_{R_i} \setminus B_{R_i}
\]
together with $A_{R_j}$ are pairwise disjoint and form a partition of $B_{R_j}$:
\begin{equation}\label{eq:BRjpart}
B_{R_j} = A_{R_j} \sqcup X_{R_j} \sqcup Y_{R_j} \sqcup Z_{R_j}.    
\end{equation}
By symmetry $Y_{R_i} = Y_{R_j}$, so from now on we denote both these sets by $Y$.

Suppose $T$ is completable, and let $T'$ be a near triple array completed from $T$.
Let $R'_i$ and $R'_j$ be contents of rows $i$ and $j$ of $T'$.
Using conditions~\eqref{eq:estprop}, partitions~\eqref{eq:BRipart} and~\eqref{eq:BRjpart} give partitions of $R'_i$ and $R'_j$:
\[
R'_i = A_{R_i} \sqcup X'_i \sqcup Y'_i \sqcup Z'_i,\quad R'_j = A_{R_j} \sqcup X'_j \sqcup Y'_j \sqcup Z'_j,
\]
where
\begin{align*}
X'_i &:= R'_i \cap X_{R_i},&&Y'_i := R'_i \cap Y, &&Z'_i := R'_i \cap Z_{R_i},\\
X'_j &:= R'_j \cap X_{R_j},&&Y'_j := R'_j \cap Y, &&Z'_j := R'_j \cap Z_{R_j}.
\end{align*}
These partitions in turn give the following partition of the common intersection of $R'_i$ and $R'_j$:
\begin{equation}\label{eq:RRpart}
R'_i \cap R'_j = (A_{R_i} \cap A_{R_j}) \sqcup X'_i \sqcup X'_j \sqcup (Y'_i \cap Y'_j).    
\end{equation}
Moreover, since each row of $T'$ contains $c$ symbols,
\[
|X'_i \cup Y'_i| \geq f_i := \max(0, c - |A_{R_i}| - |Z_{R_i}|), \quad |X'_j \cup Y'_j| \geq f_j := \max(0, c - |A_{R_j}| - |Z_{R_j}|),
\]
so
\begin{equation}\label{eq:lmin}
|R'_i \cap R'_j| \geq l_{min} := |A_{R_i} \cap A_{R_j}| + \max(0, f_i + f_j - |Y|).
\end{equation}
The quantity $l_{min}$ does not depend on the choice of $T'$, and can be calculated using the estimate sets.
If $l_{min} > \lrr^+$, $T'$ cannot be a near triple array, so our assumption is wrong and $T$ is non-completable.
If $l_{min} = \lrr^+$, then the inequality~\eqref{eq:lmin} must be tight, in which case we can update the estimate sets as follows:
\begin{enumerate}[(a)]
\item If $|Y| > f_i + f_j$ or if $f_i, f_j = 0$, then we must have $|R'_i \cap R'_j| = |A_{R_i} \cap A_{R_j}|$, which together with~\eqref{eq:RRpart} implies
\[
R'_i \subseteq B_{R_i} \setminus X_{R_i},\quad R'_j \subseteq B_{R_j} \setminus X_{R_j}.
\]
Thus we may replace $B_{R_i}$ and $B_{R_j}$ with $B_{R_i} \setminus X_{R_i}$ and $B_{R_j} \setminus X_{R_j}$, respectively.
In all remaining cases we may assume that at least one of $f_i$, $f_j$ is positive and that
\begin{equation}\label{eq:ppff}
|Y| \leq f_i + f_j.
\end{equation}
\item If $f_i > 0$ and $f_j = 0$, then~\eqref{eq:ppff} turns into $|Y| \leq f_i$, and~\eqref{eq:lmin} is tight only when
\[
|R'_i \cap R'_j| = |A_{R_i} \cap A_{R_j}| + f_i - |Y|.
\]
Looking at~\eqref{eq:RRpart}, this is only possible when $X'_j, Y'_j = \emptyset$, $Y'_i = Y$ and $Z'_i = Z_{R_i}$, i.e.
\[
A_{R_i} \cup Y \cup Z_{R_i} \subseteq R'_i,\quad R'_j \subseteq B_{R_j} \setminus X_{R_j} \setminus Y,
\]
so we may replace $A_{R_i}$ and $B_{R_j}$ with $A_{R_i} \cup Y \cup Z_{R_i}$ and $B_{R_j} \setminus X_{R_j} \setminus Y$, respectively.
The case $f_i = 0$ and $f_j > 0$ is similar.
\item If $f_i, f_j > 0$, then~\eqref{eq:lmin} being tight means that
\[
|R'_i \cap R'_j| = |A_{R_i} \cap A_{R_j}| + f_i + f_j - |Y|.
\]
This is only possible when $Z'_i = Z_{R_i}$ and $Z'_j = Z_{R_j}$, i.e.
\[
A_{R_i} \cup Z_{R_i} \subseteq R'_i,\quad A_{R_j} \cup Z_{R_j} \subseteq R'_j,
\]
so we may replace the estimate sets $A_{R_i}$ and $A_{R_j}$ with $A_{R_i} \cup Z_{R_i}$ and $A_{R_j} \cup Z_{R_j}$, respectively.

Additionally, if in this case we have $|Y| = f_i + f_j$, then $|R'_i \cap R'_j| = |A_{R_i} \cap A_{R_j}|$, so $X'_i, X'_j = \emptyset$, that is
\[
R'_i \subseteq B_{R_i} \setminus X_{R_i},\quad R'_j \subseteq B_{R_j} \setminus X_{R_j},
\]
so we may also replace $B_{R_i}$ and $B_{R_j}$ with $B_{R_i} \setminus X_{R_i}$ and $B_{R_j} \setminus X_{R_j}$, respectively.
\end{enumerate}

\item\textbf{Upper bound on row-row intersection size}

We use the same notation as in the previous rule.
In particular, assuming that $T$ is completable to a near triple array $T'$ with $R'_i$ and $R'_j$ contents of rows $i$ and $j$ of $T'$, we again have a partition~\eqref{eq:RRpart} of the common intersection of $R'_i$ and $R'_j$.
Since each row of $T'$ contains $c$ symbols,
\[
|Y'_i \cup Z'_i| \geq s_i := \max(0, c - |A_{R_i}| - |X_{R_i}|), \quad |Y'_j \cup Z'_j| \geq s_j := \max(0, c - |A_{R_j}| - |X_{R_j}|),
\]
so
\begin{align}
\begin{split}\label{eq:lmax}
|R'_i \cap R'_j| \leq l_{max} :=&\ |A_{R_i} \cap A_{R_j}|\\
+& \min(c - |A_{R_i}|, |X_{R_i}|) + \min(c - |A_{R_j}|, |X_{R_j}|)\\
+& \min(|Y|, s_i, s_j).    
\end{split}
\end{align}
The quantity $l_{max}$ does not depend on the choice of $T'$ and can be calculated using the estimate sets.
If $l_{max} < \lrr^-$, then $T'$ cannot be a near triple array, so our assumption is wrong and $T$ is non-completable.
If $l_{max} = \lrr^-$, then the inequality~\eqref{eq:lmax} must be tight, in which case we can update the sets as follows:
\begin{enumerate}[(a)]
\item If $s_i, s_j = 0$, for~\eqref{eq:lmax} to be tight we must have $Y'_i, Z'_i, Y'_j, Z'_j = \emptyset$, so
\[
R'_i \subseteq B_{R_i} \setminus Y \setminus Z_{R_i},\quad R'_j \subseteq B_{R_j} \setminus Y \setminus Z_{R_j}.
\]
Then we can replace estimate sets $B_{R_i}$ and $B_{R_j}$ with $B_{R_i} \setminus Y \setminus Z_{R_i}$ and $B_{R_j} \setminus Y \setminus Z_{R_j}$, respectively.
In the remaining cases we may assume that at least one of $s_i$, $s_j$ is positive.
\item If $s_i, s_j \geq |Y|$, then~\eqref{eq:lmax} is tight only if $X'_i = X_{R_i}$, $X'_j = X_{R_j}$ and $Y'_i, Y'_j = Y$, so
\[
A_{R_i} \cup X_{R_i} \cup Y \subseteq R'_i,\quad A_{R_j} \cup X_{R_j} \cup Y \subseteq R'_j.
\]
Thus we can replace $A_{R_i}$ and $A_{R_j}$ with $A_{R_i} \cup X_{R_i} \cup Y$ and $A_{R_j} \cup X_{R_j} \cup Y$, respectively.
In the remaining cases we may assume that at least one of $s_i$, $s_j$ is less than $|Y|$.
\item If $s_i > s_j$, then~\eqref{eq:lmax} being tight implies that $X'_i = X_{R_i}$ and $Z'_j = \emptyset$, so
\[
A_{R_i} \cup X_{R_i} \subseteq R'_i,\quad R'_j \subseteq B_{R_j} \setminus Z_{R_j}.
\]
Thus we can replace estimate sets $A_{R_i}$ and $B_{R_j}$ with $A_{R_i} \cup X_{R_i}$ and $B_{R_j} \setminus Z_{R_j}$, respectively.
The case $s_j > s_i$ is similar.
\item Finally, if $s_i = s_j$, then
\[
|Y| > s_i = s_j > 0.
\]
In this case,~\eqref{eq:lmax} is tight only when $X'_i = X_{R_i}$, $X'_j = X_{R_j}$ and $Z'_i, Z'_j = \emptyset$, thus
\[
A_{R_i} \cup X_{R_i} \subseteq R'_i \subseteq B_{R_i} \setminus Z_{R_i},\quad A_{R_j} \cup X_{R_j} \subseteq R'_j \subseteq B_{R_j} \setminus Z_{R_j}.
\]
It means we can replace estimate sets $A_{R_i}$, $B_{R_i}$, $A_{R_j}$, $B_{R_j}$ with, respectively, $A_{R_i} \cup X_{R_i}$, $B_{R_i} \setminus Z_{R_i}$, $A_{R_j} \cup X_{R_j}$, $B_{R_j} \setminus Z_{R_j}$.
\end{enumerate}
\item\textbf{Lower and upper bounds on column-column and row-column intersection sizes}

By performing a similar analysis, we get rules analogous to the previous two for the intersection of two columns, and for the intersection of a row and a column.
\end{enumerate}

\clearpage
%-----------------------------------------------------------------
\section{Counts of near triple arrays}\label{ap:nta_tables}
%-----------------------------------------------------------------
\begin{table}[!ht]
\setlength\tabcolsep{2pt}
\begin{center}
\begin{tabular}{|r|r||*{13}{r|}}
\hline
\multicolumn{2}{|r||}{$c$} & $3$ & $4$ & $5$ & $6$ & $7$ & $8$ & $9$ & $10$ & $11$ & $12$ & $13$ & $14$ & $15$\\
\hline
\hline
$v$ & $3$ & $1$ & & & & & & & & & & & &\\
\hhline{~--------------}
 & $4$ & $2$ & $2$ & & & & & & & & & & &\\
\hhline{~--------------}
 & $5$ & \cellcolor{gray!30}$1$ & $5$ & $2$ & & & & & & & & & &\\
\hhline{~--------------}
 & $6$ & \cellcolor{gray!30}$\mathbf{0}$ & \cellcolor{gray!30}$\mathbf{0}$ & $8$ & $2$ & & & & & & & & &\\
\hhline{~--------------}
 & $7$ & \cellcolor{gray!30}$1$ & \cellcolor{gray!30}$1$ & $2$ & $1$ & $1$ & & & & & & & &\\
\hhline{~--------------}
 & $8$ & \cellcolor{gray!30}$1$ & \cellcolor{gray!30}$3$ & \cellcolor{gray!30}$\mathbf{0}$ & $10$ & $12$ & $4$ & & & & & & &\\
\hhline{~--------------}
 & $9$ & $1$ & \cellcolor{gray!30}$3$ & \cellcolor{gray!30}$3$ & \cellcolor{gray!30}$1$ & $122$ & $129$ & $11$ & & & & & &\\
\hhline{~--------------}
 & $10$ & & \cellcolor{gray!30}$2$ & \cellcolor{gray!30}$11$ & \cellcolor{gray!30}$2$ & $6$ & $1439$ & $1690$ & $80$ & & & & &\\
\hhline{~--------------}
 & $11$ & & \cellcolor{gray!30}$1$ & \cellcolor{gray!30}$14$ & \cellcolor{gray!30}$16$ & \cellcolor{gray!30}$1$ & $184$ & $21474$ & $23160$ & $852$ & & & &\\
\hhline{~--------------}
 & $12$ & & $1$ & \cellcolor{gray!30}$4$ & \cellcolor{gray!30}$26$ & \cellcolor{gray!30}$9$ & \cellcolor{gray!30}$2$ & $6441$ & $338331$ & $328862$ & $11598$ & & &\\
\hhline{~--------------}
 & $13$ & & & \cellcolor{gray!30}$2$ & \cellcolor{gray!30}$51$ & \cellcolor{gray!30}$80$ & \cellcolor{gray!30}$8$ & $27$ & $193776$ & $5614315$ & $4927142$ & $169262$ & &\\
\hhline{~--------------}
 & $14$ & & & \cellcolor{gray!30}$1$ & \cellcolor{gray!30}$26$ & \cellcolor{gray!30}$205$ & \cellcolor{gray!30}$97$ & \cellcolor{gray!30}$4$ & $2481$ & $5272193$ & $97892858$ & $77912954$ & $2636564$ &\\
\hhline{~--------------}
 & $15$ & & & $1$ & \cellcolor{gray!30}$5$ & \cellcolor{gray!30}$89$ & \cellcolor{gray!30}$206$ & \cellcolor{gray!30}$42$ & \cellcolor{gray!30}$3$ & $182564$ & $136305671$ & $1790055841$ & $1300585623$ & $43373610$\\
\hhline{~--------------}
 & $16$ & & & & \cellcolor{gray!30}$2$ & \cellcolor{gray!30}$91$ & \cellcolor{gray!30}$866$ & \cellcolor{gray!30}$614$ & \cellcolor{gray!30}$42$ & $184$ & $10524953$ & $3454555697$ & $34294920533$ & $22894088053$\\
\hhline{~--------------}
 & $17$ & & & & \cellcolor{gray!30}$1$ & \cellcolor{gray!30}$30$ & \cellcolor{gray!30}$797$ & \cellcolor{gray!30}$2376$ & \cellcolor{gray!30}$665$ & \cellcolor{gray!30}$20$ & $30576$ & $503196513$ & $87419635977$ & $687642840827$\\
\hhline{~--------------}
 & $18$ & & & & $1$ & \cellcolor{gray!30}$5$ & \cellcolor{gray!30}$164$ & \cellcolor{gray!30}$1603$ & \cellcolor{gray!30}$1843$ & \cellcolor{gray!30}$258$ & \cellcolor{gray!30}$14$ & $3916524$ & $21262916079$ & $2233771041861$\\
\hhline{~--------------}
 & $19$ & & & & & \cellcolor{gray!30}$2$ & \cellcolor{gray!30}$111$ & \cellcolor{gray!30}$3185$ & \cellcolor{gray!30}$12181$ & \cellcolor{gray!30}$5211$ & \cellcolor{gray!30}$273$ & $1470$ & $377392364$ & $830377660598$\\
\hhline{~--------------}
 & $20$ & & & & & \cellcolor{gray!30}$1$ & \cellcolor{gray!30}$31$ & \cellcolor{gray!30}$1503$ & \cellcolor{gray!30}$18000$ & \cellcolor{gray!30}$26838$ & \cellcolor{gray!30}$5320$ & \cellcolor{gray!30}$133$ & $382611$ & $29140453434$\\
\hhline{~--------------}
 & $21$ & & & & & $1$ & \cellcolor{gray!30}$5$ & \cellcolor{gray!30}$200$ & \cellcolor{gray!30}$5411$ & \cellcolor{gray!30}$25987$ & \cellcolor{gray!30}$18090$ & \cellcolor{gray!30}$1990$ & \cellcolor{gray!30}$52$ & $74659793$\\
\hhline{~--------------}
 & $22$ & & & & & & \cellcolor{gray!30}$2$ & \cellcolor{gray!30}$115$ & \cellcolor{gray!30}$6091$ & \cellcolor{gray!30}$81679$ & \cellcolor{gray!30}$162605$ & \cellcolor{gray!30}$49207$ & \cellcolor{gray!30}$2079$ & $13514$\\
\hhline{~--------------}
 & $23$ & & & & & & \cellcolor{gray!30}$1$ & \cellcolor{gray!30}$31$ & \cellcolor{gray!30}$1923$ & \cellcolor{gray!30}$59477$ & \cellcolor{gray!30}$340938$ & \cellcolor{gray!30}$314182$ & \cellcolor{gray!30}$48283$ & \cellcolor{gray!30}$1027$\\
\hhline{~--------------}
 & $24$ & & & & & & $1$ & \cellcolor{gray!30}$5$ & \cellcolor{gray!30}$213$ & \cellcolor{gray!30}$10069$ & \cellcolor{gray!30}$149724$ & \cellcolor{gray!30}$399150$ & \cellcolor{gray!30}$195274$ & \cellcolor{gray!30}$17593$\\
\hhline{~--------------}
 & $25$ & & & & & & & \cellcolor{gray!30}$2$ & \cellcolor{gray!30}$116$ & \cellcolor{gray!30}$7920$ & \cellcolor{gray!30}$258111$ & \cellcolor{gray!30}$1756572$ & \cellcolor{gray!30}$2193436$ & \cellcolor{gray!30}$510576$\\
\hhline{~--------------}
 & $26$ & & & & & & & \cellcolor{gray!30}$1$ & \cellcolor{gray!30}$31$ & \cellcolor{gray!30}$2063$ & \cellcolor{gray!30}$112193$ & \cellcolor{gray!30}$1838668$ & \cellcolor{gray!30}$5993253$ & \cellcolor{gray!30}$3883956$\\
\hhline{~--------------}
 & $27$ & & & & & & & $1$ & \cellcolor{gray!30}$5$ & \cellcolor{gray!30}$215$ & \cellcolor{gray!30}$13196$ & \cellcolor{gray!30}$440424$ & \cellcolor{gray!30}$3566678$ & \cellcolor{gray!30}$6114257$\\
\hhline{~--------------}
 & $28$ & & & & & & & & \cellcolor{gray!30}$2$ & \cellcolor{gray!30}$116$ & \cellcolor{gray!30}$8625$ & \cellcolor{gray!30}$482725$ & \cellcolor{gray!30}$8688039$ & \cellcolor{gray!30}$34802693$\\
\hhline{~--------------}
 & $29$ & & & & & & & & \cellcolor{gray!30}$1$ & \cellcolor{gray!30}$31$ & \cellcolor{gray!30}$2097$ & \cellcolor{gray!30}$149310$ & \cellcolor{gray!30}$5274747$ & \cellcolor{gray!30}$48674447$\\
\hhline{~--------------}
 & $30$ & & & & & & & & $1$ & \cellcolor{gray!30}$5$ & \cellcolor{gray!30}$216$ & \cellcolor{gray!30}$14462$ & \cellcolor{gray!30}$803966$ & \cellcolor{gray!30}$15849563$\\
\hhline{~--------------}
 & $31$ & & & & & & & & & \cellcolor{gray!30}$2$ & \cellcolor{gray!30}$116$ & \cellcolor{gray!30}$8804$ & \cellcolor{gray!30}$647187$ & \cellcolor{gray!30}$23915420$\\
\hhline{~--------------}
 & $32$ & & & & & & & & & \cellcolor{gray!30}$1$ & \cellcolor{gray!30}$31$ & \cellcolor{gray!30}$2102$ & \cellcolor{gray!30}$165868$ & \cellcolor{gray!30}$9596949$\\
\hhline{~--------------}
 & $33$ & & & & & & & & & $1$ & \cellcolor{gray!30}$5$ & \cellcolor{gray!30}$216$ & \cellcolor{gray!30}$14832$ & \cellcolor{gray!30}$1073706$\\
\hhline{~--------------}
 & $34$ & & & & & & & & & & \cellcolor{gray!30}$2$ & \cellcolor{gray!30}$116$ & \cellcolor{gray!30}$8840$ & \cellcolor{gray!30}$724595$\\
\hhline{~--------------}
 & $35$ & & & & & & & & & & \cellcolor{gray!30}$1$ & \cellcolor{gray!30}$31$ & \cellcolor{gray!30}$2103$ & \cellcolor{gray!30}$171035$\\
\hhline{~--------------}
 & $36$ & & & & & & & & & & $1$ & \cellcolor{gray!30}$5$ & \cellcolor{gray!30}$216$ & \cellcolor{gray!30}$14910$\\
\hhline{~--------------}
 & $37$ & & & & & & & & & & & \cellcolor{gray!30}$2$ & \cellcolor{gray!30}$116$ & \cellcolor{gray!30}$8845$\\
\hhline{~--------------}
 & $38$ & & & & & & & & & & & \cellcolor{gray!30}$1$ & \cellcolor{gray!30}$31$ & \cellcolor{gray!30}$2103$\\
\hhline{~--------------}
 & $39$ & & & & & & & & & & & $1$ & \cellcolor{gray!30}$5$ & \cellcolor{gray!30}$216$\\
\hhline{~--------------}
 & $40$ & & & & & & & & & & & & \cellcolor{gray!30}$2$ & \cellcolor{gray!30}$116$\\
\hhline{~--------------}
 & $41$ & & & & & & & & & & & & \cellcolor{gray!30}$1$ & \cellcolor{gray!30}$31$\\
\hhline{~--------------}
 & $42$ & & & & & & & & & & & & $1$ & \cellcolor{gray!30}$5$\\
\hhline{~--------------}
 & $43$ & & & & & & & & & & & & & \cellcolor{gray!30}$2$\\
\hhline{~--------------}
 & $44$ & & & & & & & & & & & & & \cellcolor{gray!30}$1$\\
\hhline{~--------------}
 & $45$ & & & & & & & & & & & & & $1$\\
\hline
\end{tabular}
\end{center}
\caption{The number of non-isotopic $(3 \times c, v)$-near triple arrays.
The cell is shaded if the corresponding parameter $e^+$ is even.}
\label{tbl:nta3}
\end{table}

\begin{table}[!ht]
\begin{center}
\begin{tabular}{|r|r||*{9}{r|}}
\hline
\multicolumn{2}{|r||}{$c$} & $4$ & $5$ & $6$ & $7$ & $8$ & $9$ & $10$ & $11$ & $12$\\
\hline
\hline
$v$ & $4$ & \cellcolor{gray!30}$2$ & & & & & & & &\\
\hhline{~----------}
 & $5$ & \cellcolor{gray!30}$6$ & \cellcolor{gray!30}$3$ & & & & & & &\\
\hhline{~----------}
 & $6$ & $20$ & \cellcolor{gray!30}$132$ & \cellcolor{gray!30}$34$ & & & & & &\\
\hhline{~----------}
 & $7$ & $8$ & $\mathbf{0}$ & \cellcolor{gray!30}$24$ & \cellcolor{gray!30}$6$ & & & & &\\
\hhline{~----------}
 & $8$ & \cellcolor{gray!30}$1$ & $144$ & $\mathbf{0}$ & \cellcolor{gray!30}$6310$ & \cellcolor{gray!30}$285$ & & & &\\
\hhline{~----------}
 & $9$ & \cellcolor{gray!30}$\mathbf{0}$ & $15$ & $255$ & \cellcolor{gray!30}$\mathbf{0}$ & \cellcolor{gray!30}$331625$ & \cellcolor{gray!30}$5342$ & & &\\
\hhline{~----------}
 & $10$ & \cellcolor{gray!30}$3$ & \cellcolor{gray!30}$\mathbf{0}$ & $43$ & $38$ & \cellcolor{gray!30}$1176$ & \cellcolor{gray!30}$1687883$ & \cellcolor{gray!30}$9722$ & &\\
\hhline{~----------}
 & $11$ & \cellcolor{gray!30}$7$ & \cellcolor{gray!30}$1$ & $1$ & $63$ & $8$ & \cellcolor{gray!30}$3062$ & \cellcolor{gray!30}$606102$ & \cellcolor{gray!30}$1598$ &\\
\hhline{~----------}
 & $12$ & \cellcolor{gray!30}$14$ & \cellcolor{gray!30}$9$ & \cellcolor{gray!30}$2$ & $51$ & $2$ & $1$ & \cellcolor{gray!30}$\mathbf{0}$ & \cellcolor{gray!30}$2600$ & \cellcolor{gray!30}$262$\\
\hhline{~----------}
 & $13$ & \cellcolor{gray!30}$14$ & \cellcolor{gray!30}$45$ & \cellcolor{gray!30}$6$ & $26$ & $1630$ & $101$ & \cellcolor{gray!30}$24$ & \cellcolor{gray!30}$\mathbf{0}$ & \cellcolor{gray!30}$192$\\
\hhline{~----------}
 & $14$ & \cellcolor{gray!30}$4$ & \cellcolor{gray!30}$29$ & \cellcolor{gray!30}$65$ & \cellcolor{gray!30}$1$ & $24025$ & $8322$ & $96$ & \cellcolor{gray!30}$1753$ & \cellcolor{gray!30}$57220$\\
\hhline{~----------}
 & $15$ & \cellcolor{gray!30}$1$ & \cellcolor{gray!30}$74$ & \cellcolor{gray!30}$547$ & \cellcolor{gray!30}$27$ & $724$ & $2838837$ & $214834$ & $144$ & \cellcolor{gray!30}$201373$\\
\hhline{~----------}
 & $16$ & $1$ & \cellcolor{gray!30}$60$ & \cellcolor{gray!30}$1674$ & \cellcolor{gray!30}$127$ & \cellcolor{gray!30}$5$ & $250125$ & $86711536$ & $1488902$ & $71$\\
\hhline{~----------}
 & $17$ & & \cellcolor{gray!30}$21$ & \cellcolor{gray!30}$1225$ & \cellcolor{gray!30}$1986$ & \cellcolor{gray!30}$156$ & $36393$ & $267557001$ & $774406930$ & $3624700$\\
\hhline{~----------}
 & $18$ & & \cellcolor{gray!30}$4$ & \cellcolor{gray!30}$201$ & \cellcolor{gray!30}$15697$ & \cellcolor{gray!30}$3798$ & \cellcolor{gray!30}$3$ & $101615015$ & $49175310984$ & $1894336935$\\
\hhline{~----------}
 & $19$ & & \cellcolor{gray!30}$1$ & \cellcolor{gray!30}$253$ & \cellcolor{gray!30}$40135$ & \cellcolor{gray!30}$19370$ & \cellcolor{gray!30}$357$ & $304711$ & $34374903385$ & $+$\\
\hhline{~----------}
 & $20$ & & $1$ & \cellcolor{gray!30}$101$ & \cellcolor{gray!30}$32472$ & \cellcolor{gray!30}$23784$ & \cellcolor{gray!30}$27757$ & \cellcolor{gray!30}$221$ & $4052892965$ & $+$\\
\hhline{~----------}
 & $21$ & & & \cellcolor{gray!30}$24$ & \cellcolor{gray!30}$8199$ & \cellcolor{gray!30}$219769$ & \cellcolor{gray!30}$437809$ & \cellcolor{gray!30}$4166$ & $5044186$ & $+$\\
\hhline{~----------}
 & $22$ & & & \cellcolor{gray!30}$4$ & \cellcolor{gray!30}$569$ & \cellcolor{gray!30}$595863$ & \cellcolor{gray!30}$2183763$ & \cellcolor{gray!30}$45490$ & \cellcolor{gray!30}$2453$ & $16765885394$\\
\hhline{~----------}
 & $23$ & & & \cellcolor{gray!30}$1$ & \cellcolor{gray!30}$403$ & \cellcolor{gray!30}$546980$ & \cellcolor{gray!30}$3318477$ & \cellcolor{gray!30}$1964596$ & \cellcolor{gray!30}$110894$ & $207132761$\\
\hhline{~----------}
 & $24$ & & & $1$ & \cellcolor{gray!30}$111$ & \cellcolor{gray!30}$183576$ & \cellcolor{gray!30}$1276477$ & \cellcolor{gray!30}$27462929$ & \cellcolor{gray!30}$3274452$ & \cellcolor{gray!30}$13778$\\
\hhline{~----------}
 & $25$ & & & & \cellcolor{gray!30}$24$ & \cellcolor{gray!30}$22422$ & \cellcolor{gray!30}$5024686$ & \cellcolor{gray!30}$128522759$ & \cellcolor{gray!30}$25397255$ & \cellcolor{gray!30}$723049$\\
\hhline{~----------}
 & $26$ & & & & \cellcolor{gray!30}$4$ & \cellcolor{gray!30}$882$ & \cellcolor{gray!30}$5911305$ & \cellcolor{gray!30}$214769083$ & \cellcolor{gray!30}$48593583$ & \cellcolor{gray!30}$42755237$\\
\hhline{~----------}
 & $27$ & & & & \cellcolor{gray!30}$1$ & \cellcolor{gray!30}$457$ & \cellcolor{gray!30}$2598146$ & \cellcolor{gray!30}$122072545$ & \cellcolor{gray!30}$783409971$ & \cellcolor{gray!30}$838121559$\\
\hhline{~----------}
 & $28$ & & & & $1$ & \cellcolor{gray!30}$113$ & \cellcolor{gray!30}$465518$ & \cellcolor{gray!30}$19265323$ & \cellcolor{gray!30}$4006629828$ & \cellcolor{gray!30}$5874053693$\\
\hhline{~----------}
 & $29$ & & & & & \cellcolor{gray!30}$24$ & \cellcolor{gray!30}$34714$ & \cellcolor{gray!30}$38180496$ & \cellcolor{gray!30}$7483581415$ & \cellcolor{gray!30}$13555343682$\\
\hhline{~----------}
 & $30$ & & & & & \cellcolor{gray!30}$4$ & \cellcolor{gray!30}$1002$ & \cellcolor{gray!30}$23940212$ & \cellcolor{gray!30}$5395821358$ & \cellcolor{gray!30}$8585291699$\\
\hhline{~----------}
 & $31$ & & & & & \cellcolor{gray!30}$1$ & \cellcolor{gray!30}$464$ & \cellcolor{gray!30}$6107278$ & \cellcolor{gray!30}$1442407076$ & \cellcolor{gray!30}$63316583808$\\
\hhline{~----------}
 & $32$ & & & & & $1$ & \cellcolor{gray!30}$113$ & \cellcolor{gray!30}$709486$ & \cellcolor{gray!30}$115646704$ & \cellcolor{gray!30}$148523119798$\\
\hhline{~----------}
 & $33$ & & & & & & \cellcolor{gray!30}$24$ & \cellcolor{gray!30}$39938$ & \cellcolor{gray!30}$133951426$ & \cellcolor{gray!30}$135060766907$\\
\hhline{~----------}
 & $34$ & & & & & & \cellcolor{gray!30}$4$ & \cellcolor{gray!30}$1027$ & \cellcolor{gray!30}$52211264$ & \cellcolor{gray!30}$50537462784$\\
\hhline{~----------}
 & $35$ & & & & & & \cellcolor{gray!30}$1$ & \cellcolor{gray!30}$465$ & \cellcolor{gray!30}$9095115$ & \cellcolor{gray!30}$7490105521$\\
\hhline{~----------}
 & $36$ & & & & & & $1$ & \cellcolor{gray!30}$113$ & \cellcolor{gray!30}$819858$ & \cellcolor{gray!30}$355086761$\\
\hhline{~----------}
 & $37$ & & & & & & & \cellcolor{gray!30}$24$ & \cellcolor{gray!30}$41110$ & \cellcolor{gray!30}$271914797$\\
\hhline{~----------}
 & $38$ & & & & & & & \cellcolor{gray!30}$4$ & \cellcolor{gray!30}$1029$ & \cellcolor{gray!30}$75685889$\\
\hhline{~----------}
 & $39$ & & & & & & & \cellcolor{gray!30}$1$ & \cellcolor{gray!30}$465$ & \cellcolor{gray!30}$10504476$\\
\hhline{~----------}
 & $40$ & & & & & & & $1$ & \cellcolor{gray!30}$113$ & \cellcolor{gray!30}$848391$\\
\hhline{~----------}
 & $41$ & & & & & & & & \cellcolor{gray!30}$24$ & \cellcolor{gray!30}$41266$\\
\hhline{~----------}
 & $42$ & & & & & & & & \cellcolor{gray!30}$4$ & \cellcolor{gray!30}$1030$\\
\hhline{~----------}
 & $43$ & & & & & & & & \cellcolor{gray!30}$1$ & \cellcolor{gray!30}$465$\\
\hhline{~----------}
 & $44$ & & & & & & & & $1$ & \cellcolor{gray!30}$113$\\
\hhline{~----------}
 & $45$ & & & & & & & & & \cellcolor{gray!30}$24$\\
\hhline{~----------}
 & $46$ & & & & & & & & & \cellcolor{gray!30}$4$\\
\hhline{~----------}
 & $47$ & & & & & & & & & \cellcolor{gray!30}$1$\\
\hhline{~----------}
 & $48$ & & & & & & & & & $1$\\
\hline
\end{tabular}
\end{center}
\caption{The number of non-isotopic $(4 \times c, v)$-near triple arrays.
Cells with $+$ correspond to cases where the complete enumeration was not finished, but some examples were found.
The cell is shaded if the corresponding parameter $e^+$ is even.}
\label{tbl:nta4}
\end{table}

\begin{table}[!ht]
\begin{center}
\begin{tabular}{|r|r||*{6}{r|}}
\hline
\multicolumn{2}{|r||}{$c$} & $5$ & $6$ & $7$ & $8$ & $9$ & $10$\\
\hline
\hline
$v$ & $5$ & $2$ & & & & &\\
\hhline{~-------}
 & $6$ & $109$ & $40$ & & & &\\
\hhline{~-------}
 & $7$ & \cellcolor{gray!30}$4680$ & $38246$ & $5205$ & & &\\
\hhline{~-------}
 & $8$ & \cellcolor{gray!30}$\mathbf{0}$ & \cellcolor{gray!30}$\mathbf{0}$ & $128886$ & $6688$ & &\\
\hhline{~-------}
 & $9$ & $58$ & \cellcolor{gray!30}$51248$ & \cellcolor{gray!30}$\mathbf{0}$ & $183128838$ & $2757904$ &\\
\hhline{~-------}
 & $10$ & $2$ & $7$ & \cellcolor{gray!30}$19596$ & \cellcolor{gray!30}$\mathbf{0}$ & $219802698$ & $1913816$\\
\hhline{~-------}
 & $11$ & $1064$ & $877$ & \cellcolor{gray!30}$1670$ & \cellcolor{gray!30}$\mathbf{0}$ & $\mathbf{0}$ & $866014$\\
\hhline{~-------}
 & $12$ & $12$ & $222152$ & $45$ & \cellcolor{gray!30}$518859$ & \cellcolor{gray!30}$\mathbf{0}$ & $\mathbf{0}$\\
\hhline{~-------}
 & $13$ & \cellcolor{gray!30}$3$ & $22655$ & $34847$ & \cellcolor{gray!30}$377$ & \cellcolor{gray!30}$165902231$ & \cellcolor{gray!30}$\mathbf{0}$\\
\hhline{~-------}
 & $14$ & \cellcolor{gray!30}$4$ & $186$ & $2561224$ & $462$ & \cellcolor{gray!30}$480486$ & \cellcolor{gray!30}$+$\\
\hhline{~-------}
 & $15$ & \cellcolor{gray!30}$4$ & \cellcolor{gray!30}$\mathbf{0}$ & $1628933$ & $2435740$ & $\mathbf{0}$ & \cellcolor{gray!30}$+$\\
\hhline{~-------}
 & $16$ & \cellcolor{gray!30}$20$ & \cellcolor{gray!30}$1$ & $217$ & $2668280$ & $703$ & \cellcolor{gray!30}$+$\\
\hhline{~-------}
 & $17$ & \cellcolor{gray!30}$378$ & \cellcolor{gray!30}$17$ & $1$ & $15461$ & $22436$ & $\mathbf{0}$\\
\hhline{~-------}
 & $18$ & \cellcolor{gray!30}$1492$ & \cellcolor{gray!30}$382$ & \cellcolor{gray!30}$2$ & $40482$ & $30551$ & $\mathbf{0}$\\
\hhline{~-------}
 & $19$ & \cellcolor{gray!30}$1962$ & \cellcolor{gray!30}$1356$ & \cellcolor{gray!30}$180$ & $3398$ & $13758974$ & $51273$\\
\hhline{~-------}
 & $20$ & \cellcolor{gray!30}$1051$ & \cellcolor{gray!30}$1051$ & \cellcolor{gray!30}$8274$ & \cellcolor{gray!30}$17$ & $1521373$ & $9582082$\\
\hhline{~-------}
 & $21$ & \cellcolor{gray!30}$241$ & \cellcolor{gray!30}$11651$ & \cellcolor{gray!30}$133685$ & \cellcolor{gray!30}$410$ & $6693245$ & $+$\\
\hhline{~-------}
 & $22$ & \cellcolor{gray!30}$31$ & \cellcolor{gray!30}$33065$ & \cellcolor{gray!30}$701262$ & \cellcolor{gray!30}$19914$ & $32848$ & $+$\\
\hhline{~-------}
 & $23$ & \cellcolor{gray!30}$4$ & \cellcolor{gray!30}$34799$ & \cellcolor{gray!30}$1259950$ & \cellcolor{gray!30}$886120$ & \cellcolor{gray!30}$3091$ & $+$\\
\hhline{~-------}
 & $24$ & \cellcolor{gray!30}$1$ & \cellcolor{gray!30}$15784$ & \cellcolor{gray!30}$750183$ & \cellcolor{gray!30}$16163512$ & \cellcolor{gray!30}$60592$ & $26784571$\\
\hhline{~-------}
 & $25$ & $1$ & \cellcolor{gray!30}$3377$ & \cellcolor{gray!30}$120800$ & \cellcolor{gray!30}$115784626$ & \cellcolor{gray!30}$414940$ & \cellcolor{gray!30}$41153$\\
\hhline{~-------}
 & $26$ & & \cellcolor{gray!30}$379$ & \cellcolor{gray!30}$401523$ & \cellcolor{gray!30}$333793857$ & \cellcolor{gray!30}$25971198$ & \cellcolor{gray!30}$2362118$\\
\hhline{~-------}
 & $27$ & & \cellcolor{gray!30}$35$ & \cellcolor{gray!30}$427529$ & \cellcolor{gray!30}$397490453$ & \cellcolor{gray!30}$673586835$ & \cellcolor{gray!30}$78105957$\\
\hhline{~-------}
 & $28$ & & \cellcolor{gray!30}$4$ & \cellcolor{gray!30}$194189$ & \cellcolor{gray!30}$195608765$ & \cellcolor{gray!30}$7216790813$ & \cellcolor{gray!30}$1020860674$\\
\hhline{~-------}
 & $29$ & & \cellcolor{gray!30}$1$ & \cellcolor{gray!30}$42662$ & \cellcolor{gray!30}$37391589$ & \cellcolor{gray!30}$32615487253$ & \cellcolor{gray!30}$4473909398$\\
\hhline{~-------}
 & $30$ & & $1$ & \cellcolor{gray!30}$5110$ & \cellcolor{gray!30}$2203445$ & \cellcolor{gray!30}$65764321020$ & \cellcolor{gray!30}$5579638990$\\
\hhline{~-------}
 & $31$ & & & \cellcolor{gray!30}$411$ & \cellcolor{gray!30}$3405980$ & \cellcolor{gray!30}$61329868571$ & \cellcolor{gray!30}$134724076378$\\
\hhline{~-------}
 & $32$ & & & \cellcolor{gray!30}$35$ & \cellcolor{gray!30}$1831437$ & \cellcolor{gray!30}$26809296536$ & \cellcolor{gray!30}$1128645501851$\\
\hhline{~-------}
 & $33$ & & & \cellcolor{gray!30}$4$ & \cellcolor{gray!30}$458386$ & \cellcolor{gray!30}$5402919445$ & \cellcolor{gray!30}$4038030856747$\\
\hhline{~-------}
 & $34$ & & & \cellcolor{gray!30}$1$ & \cellcolor{gray!30}$62637$ & \cellcolor{gray!30}$465771713$ & \cellcolor{gray!30}$6717790442263$\\
\hhline{~-------}
 & $35$ & & & $1$ & \cellcolor{gray!30}$5628$ & \cellcolor{gray!30}$13452549$ & \cellcolor{gray!30}$5485741819034$\\
\hhline{~-------}
 & $36$ & & & & \cellcolor{gray!30}$417$ & \cellcolor{gray!30}$11806850$ & \cellcolor{gray!30}$2261762679072$\\
\hhline{~-------}
 & $37$ & & & & \cellcolor{gray!30}$35$ & \cellcolor{gray!30}$3857004$ & \cellcolor{gray!30}$473080761333$\\
\hhline{~-------}
 & $38$ & & & & \cellcolor{gray!30}$4$ & \cellcolor{gray!30}$648895$ & \cellcolor{gray!30}$48965130731$\\
\hhline{~-------}
 & $39$ & & & & \cellcolor{gray!30}$1$ & \cellcolor{gray!30}$69217$ & \cellcolor{gray!30}$2314176175$\\
\hhline{~-------}
 & $40$ & & & & $1$ & \cellcolor{gray!30}$5690$ & \cellcolor{gray!30}$39085268$\\
\hhline{~-------}
 & $41$ & & & & & \cellcolor{gray!30}$417$ & \cellcolor{gray!30}$22523768$\\
\hhline{~-------}
 & $42$ & & & & & \cellcolor{gray!30}$35$ & \cellcolor{gray!30}$5264112$\\
\hhline{~-------}
 & $43$ & & & & & \cellcolor{gray!30}$4$ & \cellcolor{gray!30}$716516$\\
\hhline{~-------}
 & $44$ & & & & & \cellcolor{gray!30}$1$ & \cellcolor{gray!30}$70261$\\
\hhline{~-------}
 & $45$ & & & & & $1$ & \cellcolor{gray!30}$5696$\\
\hhline{~-------}
 & $46$ & & & & & & \cellcolor{gray!30}$417$\\
\hhline{~-------}
 & $47$ & & & & & & \cellcolor{gray!30}$35$\\
\hhline{~-------}
 & $48$ & & & & & & \cellcolor{gray!30}$4$\\
\hhline{~-------}
 & $49$ & & & & & & \cellcolor{gray!30}$1$\\
\hhline{~-------}
 & $50$ & & & & & & $1$\\
\hline
\end{tabular}
\end{center}
\caption{The number of non-isotopic $(5 \times c, v)$-near triple arrays.
Cells with $+$ correspond to cases where the complete enumeration was not finished, but some examples were found.
The cell is shaded if the corresponding parameter $e^+$ is even.}
\label{tbl:nta5}
\end{table}

\begin{table}[!ht]
\begin{center}
\begin{tabular}{|r|r||*{3}{r|}|r|}
\hline
\multicolumn{2}{|r||}{$r \times c$} & $6 \times 6$ & $6 \times 7$ & $6 \times 8$ & $7 \times 7$\\
\hline
\hline
$v$ & $6$ & \cellcolor{gray!30}$22$ & & &\\
\hhline{~-----}
 & $7$ & \cellcolor{gray!30}$23746$ & \cellcolor{gray!30}$3479$ & & $564$\\
\hhline{~-----}
 & $8$ & $25797136$ & \cellcolor{gray!30}$187768202$ & \cellcolor{gray!30}$21956009$ & $106228849$\\
\hhline{~-----}
 & $9$ & \cellcolor{gray!30}$696$ & $24788690$ & \cellcolor{gray!30}$18453776222$ & \cellcolor{gray!30}$+$\\
\hhline{~-----}
 & $10$ & \cellcolor{gray!30}$3552$ & $\mathbf{0}$ & $\mathbf{0}$ & $\mathbf{0}$\\
\hhline{~-----}
 & $11$ & \cellcolor{gray!30}$18194$ & \cellcolor{gray!30}$\mathbf{0}$ & $\mathbf{0}$ & $\mathbf{0}$\\
\hhline{~-----}
 & $12$ & $48$ & \cellcolor{gray!30}$59288586$ & \cellcolor{gray!30}$\mathbf{0}$ & $\mathbf{0}$\\
\hhline{~-----}
 & $13$ & $1292$ & \cellcolor{gray!30}$305875$ & \cellcolor{gray!30}$+$ & \cellcolor{gray!30}$+$\\
\hhline{~-----}
 & $14$ & $156$ & $\mathbf{0}$ & \cellcolor{gray!30}$+$ & \cellcolor{gray!30}$209276$\\
\hhline{~-----}
 & $15$ & $2256752$ & $32932$ & \cellcolor{gray!30}$+$ & \cellcolor{gray!30}$+$\\
\hhline{~-----}
 & $16$ & $11321683$ & $212352507$ & $85$ & \cellcolor{gray!30}$+$\\
\hhline{~-----}
 & $17$ & $49662$ & $3198380866$ & $51680$ & $+$\\
\hhline{~-----}
 & $18$ & \cellcolor{gray!30}$10$ & $29151126$ & $+$ & $+$\\
\hhline{~-----}
 & $19$ & \cellcolor{gray!30}$18$ & $39608203$ & $+$ & $+$\\
\hhline{~-----}
 & $20$ & \cellcolor{gray!30}$12$ & $38199$ & $+$ & $+$\\
\hhline{~-----}
 & $21$ & \cellcolor{gray!30}$10$ & \cellcolor{gray!30}$\mathbf{0}$ & $+$ & $+$\\
\hhline{~-----}
 & $22$ & \cellcolor{gray!30}$253$ & \cellcolor{gray!30}$14$ & $+$ & $+$\\
\hhline{~-----}
 & $23$ & \cellcolor{gray!30}$12274$ & \cellcolor{gray!30}$731$ & $308$ & $+$\\
\hhline{~-----}
 & $24$ & \cellcolor{gray!30}$215695$ & \cellcolor{gray!30}$20287$ & \cellcolor{gray!30}$70$ & $+$\\
\hhline{~-----}
 & $25$ & \cellcolor{gray!30}$1388636$ & \cellcolor{gray!30}$191755$ & \cellcolor{gray!30}$7416$ & \cellcolor{gray!30}$194$\\
\hhline{~-----}
 & $26$ & \cellcolor{gray!30}$3643800$ & \cellcolor{gray!30}$555724$ & \cellcolor{gray!30}$554459$ & \cellcolor{gray!30}$566$\\
\hhline{~-----}
 & $27$ & \cellcolor{gray!30}$4276477$ & \cellcolor{gray!30}$418852$ & \cellcolor{gray!30}$22065914$ & \cellcolor{gray!30}$379$\\
\hhline{~-----}
 & $28$ & \cellcolor{gray!30}$2445135$ & \cellcolor{gray!30}$9820703$ & \cellcolor{gray!30}$388697987$ & \cellcolor{gray!30}$142$\\
\hhline{~-----}
 & $29$ & \cellcolor{gray!30}$720821$ & \cellcolor{gray!30}$74786706$ & \cellcolor{gray!30}$2856419193$ & \cellcolor{gray!30}$33561$\\
\hhline{~-----}
 & $30$ & \cellcolor{gray!30}$113560$ & \cellcolor{gray!30}$240004321$ & \cellcolor{gray!30}$8790938224$ & \cellcolor{gray!30}$3252688$\\
\hhline{~-----}
 & $31$ & \cellcolor{gray!30}$10012$ & \cellcolor{gray!30}$362934810$ & \cellcolor{gray!30}$11245395929$ & \cellcolor{gray!30}$126061414$\\
\hhline{~-----}
 & $32$ & \cellcolor{gray!30}$579$ & \cellcolor{gray!30}$278730543$ & \cellcolor{gray!30}$5630028557$ & \cellcolor{gray!30}$2215766156$\\
\hhline{~-----}
 & $33$ & \cellcolor{gray!30}$40$ & \cellcolor{gray!30}$114999499$ & \cellcolor{gray!30}$875826406$ & \cellcolor{gray!30}$18609936195$\\
\hhline{~-----}
 & $34$ & \cellcolor{gray!30}$4$ & \cellcolor{gray!30}$26543461$ & \cellcolor{gray!30}$6278248215$ & \cellcolor{gray!30}$78666874054$\\
\hhline{~-----}
 & $35$ & \cellcolor{gray!30}$1$ & \cellcolor{gray!30}$3532905$ & \cellcolor{gray!30}$15947990679$ & \cellcolor{gray!30}$175829559343$\\
\hhline{~-----}
 & $36$ & $1$ & \cellcolor{gray!30}$280516$ & \cellcolor{gray!30}$18901668915$ & \cellcolor{gray!30}$216924726101$\\
\hhline{~-----}
 & $37$ & & \cellcolor{gray!30}$14491$ & \cellcolor{gray!30}$11705603557$ & \cellcolor{gray!30}$153271802938$\\
\hhline{~-----}
 & $38$ & & \cellcolor{gray!30}$626$ & \cellcolor{gray!30}$4044873951$ & \cellcolor{gray!30}$63993999295$\\
\hhline{~-----}
 & $39$ & & \cellcolor{gray!30}$40$ & \cellcolor{gray!30}$815339796$ & \cellcolor{gray!30}$16196868113$\\
\hhline{~-----}
 & $40$ & & \cellcolor{gray!30}$4$ & \cellcolor{gray!30}$99275901$ & \cellcolor{gray!30}$2534828720$\\
\hhline{~-----}
 & $41$ & & \cellcolor{gray!30}$1$ & \cellcolor{gray!30}$7593009$ & \cellcolor{gray!30}$249091015$\\
\hhline{~-----}
 & $42$ & & $1$ & \cellcolor{gray!30}$391222$ & \cellcolor{gray!30}$15628165$\\
\hhline{~-----}
 & $43$ & & & \cellcolor{gray!30}$15737$ & \cellcolor{gray!30}$651525$\\
\hhline{~-----}
 & $44$ & & & \cellcolor{gray!30}$635$ & \cellcolor{gray!30}$20544$\\
\hhline{~-----}
 & $45$ & & & \cellcolor{gray!30}$40$ & \cellcolor{gray!30}$677$\\
\hhline{~-----}
 & $46$ & & & \cellcolor{gray!30}$4$ & \cellcolor{gray!30}$40$\\
\hhline{~-----}
 & $47$ & & & \cellcolor{gray!30}$1$ & \cellcolor{gray!30}$4$\\
\hhline{~-----}
 & $48$ & & & $1$ & \cellcolor{gray!30}$1$\\
\hhline{~-----}
 & $49$ & & & & $1$\\
\hline
\end{tabular}
\end{center}
\caption{The number of non-isotopic $(6 \times c, v)$ and $(7 \times 7, v)$-near triple arrays.
Cells with $+$ correspond to cases where the complete enumeration was not finished, but some examples were found.
The cell is shaded if the corresponding parameter $e^+$ is even.}
\label{tbl:nta67}
\end{table}

\clearpage
%-----------------------------------------------------------------
\section{Counts of triple arrays}\label{ap:TA}
%-----------------------------------------------------------------

\begin{table}[!ht]
\begin{center}
\begin{tabular}{|r|r||r|r|r|r|r|r|}
\hline
\multicolumn{2}{|r||}{$v$} & $6$ & $10$ & $12$ & $14$ & $15$ & $20$\\
\hline
\multicolumn{2}{|r||}{$r \times c$} & $3 \times 4$ & $5 \times 6$ & $4 \times 9$ & $7 \times 8$ & $6 \times 10$ & $5 \times 16$\\
\hline
\multicolumn{2}{|r||}{$e$} & $2$ & $3$ & $3$ & $4$ & $4$ & $4$\\
\hline
\multicolumn{2}{|r||}{$\lrc$} & $2$ & $3$ & $3$ & $4$ & $4$ & $4$\\
\hline
\multicolumn{2}{|r||}{$\lrr$} & $2$ & $3$ & $6$ & $4$ & $6$ & $12$\\
\hline
\multicolumn{2}{|r||}{$\lcc$} & $1$ & $2$ & $1$ & $3$ & $2$ & $1$\\
\hline
\multicolumn{2}{|r||}{Total \#} & $0$ & $7$ & $1$ & $684782$ & $270119$ & $26804$\\
\hline
\hline
$|\Aut| $& $1$ & & & & $682054$ & $263790$ & $26714$\\
\hhline{~-------}
& $2$ & & & & $1266$ & $5280$ &\\
\hhline{~-------}
& $3$ & & $2$ & $1$ & $1277$ & $260$ & $90$\\
\hhline{~-------}
& $4$ & & $1$ & & $98$ & $579$ &\\
\hhline{~-------}
& $5$ & & & & & $1$ &\\
\hhline{~-------}
& $6$ & & $1$ & & $48$ & $69$ &\\
\hhline{~-------}
& $7$ & & & & $2$ & &\\
\hhline{~-------}
& $8$ & & & & $12$ & $88$ &\\
\hhline{~-------}
& $10$ & & & & & $2$ &\\
\hhline{~-------}
& $12$ & & $2$ & & $9$ & $17$ &\\
\hhline{~-------}
& $16$ & & & & & $11$ &\\
\hhline{~-------}
& $18$ & & & & & $1$ &\\
\hhline{~-------}
& $20$ & & & & & $4$ &\\
\hhline{~-------}
& $21$ & & & & $8$ & &\\
\hhline{~-------}
& $24$ & & & & $7$ & $9$ &\\
\hhline{~-------}
& $36$ & & & & & $2$ &\\
\hhline{~-------}
& $48$ & & & & & $4$ &\\
\hhline{~-------}
& $60$ & & $1$ & & & &\\
\hhline{~-------}
& $120$ & & & & & $1$ &\\
\hhline{~-------}
& $168$ & & & & $1$ & &\\
\hhline{~-------}
& $720$ & & & & & $1$ &\\
\hline
\end{tabular}
\caption{The number of triple arrays sorted by autotopism group order.}
\label{tbl:triple_enum}
\end{center}
\end{table}

\clearpage
%-----------------------------------------------------------------
\section{New counts of near Youden rectangles}\label{ap:NYR}
%-----------------------------------------------------------------

\begin{table}[!ht]
\begin{center}
\begin{tabular}{|r|r||r|r|r|r|}
\hline
\multicolumn{2}{|r||}{$r \times c$} & $3 \times 14$ & $3 \times 15$ & $4 \times 14$ & $5 \times 12$\\
\hline
\multicolumn{2}{|r||}{$\lcc$} & $0.46...$ & $0.42...$ & $0.92...$ & $1.81...$\\
\hline
\multicolumn{2}{|r||}{Total \#} & $2636564$ & $43373610$ & $825$ & $96671180$\\
\hline
\hline
$|\Aut| $& $1$ & $2629306$ & $43341502$ & $625$ & $96559477$\\
\hhline{~-----}
& $2$ & $6980$ & $31077$ & $136$ & $105305$\\
\hhline{~-----}
& $3$ & $189$ & $803$ & $23$ & $2817$\\
\hhline{~-----}
& $4$ & $47$ & & $24$ & $3007$\\
\hhline{~-----}
& $5$ & & $25$ & &\\
\hhline{~-----}
& $6$ & $31$ & $178$ & $4$ & $233$\\
\hhline{~-----}
& $7$ & $2$ & & &\\
\hhline{~-----}
& $8$ & & & $6$ & $225$\\
\hhline{~-----}
& $9$ & & $2$ & & $7$\\
\hhline{~-----}
& $10$ & & $12$ & &\\
\hhline{~-----}
& $12$ & $1$ & & $2$ & $60$\\
\hhline{~-----}
& $14$ & $6$ & & $1$ &\\
\hhline{~-----}
& $15$ & & $2$ & &\\
\hhline{~-----}
& $16$ & & & & $10$\\
\hhline{~-----}
& $18$ & & $2$ & & $5$\\
\hhline{~-----}
& $21$ & & $2$ & $1$ &\\
\hhline{~-----}
& $24$ & & & $2$ & $25$\\
\hhline{~-----}
& $30$ & & $3$ & &\\
\hhline{~-----}
& $36$ & & & & $2$\\
\hhline{~-----}
& $42$ & & & $1$ &\\
\hhline{~-----}
& $48$ & & & & $5$\\
\hhline{~-----}
& $56$ & & $1$ & &\\
\hhline{~-----}
& $72$ & & & & $1$\\
\hhline{~-----}
& $144$ & & & & $1$\\
\hhline{~-----}
& $168$ & & $1$ & &\\
\hhline{~-----}
& $294$ & $2$ & & &\\
\hline
\end{tabular}
\caption{The number of $r \times c$ near Youden rectangles, that is, $(r \times c, c)$-near triple arrays, sorted by autotopism group order.}
\label{tbl:nyr_enum}
\end{center}
\end{table}

\clearpage
%-----------------------------------------------------------------
\section{Counts of other row-column designs}\label{ap:RCD}
%-----------------------------------------------------------------

\begin{table}[!ht]
\begin{center}
\begin{tabular}{|r|r|r||r|r|r|r|r||r|}
\hline
$v$ & $e$ & $r \times c$ & $\mathrm{MA}^T$ & $\mathrm{SA}$ & $\mathrm{DA}$ & $\mathrm{TA}$ & $\mathrm{AO}$ & $\mathrm{NTA}$\\
\hline
\hline
\multirow{2}{*}{$6$} & \multirow{2}{*}{$2$} & $3 \times 4$ & $3$ & $2$ & $2$ & $0$ & $-$ & $0$\\
\hhline{~~-------}
& & $4 \times 3$ & $-$ & $-$ & $2$ & $0$ & $-$ & $0$\\
\hline
\hline
\multirow{2}{*}{$8$} & \multirow{1}{*}{$2$} & $4 \times 4$ &  &  &  &  & $20$ & \cellcolor{gray!30}$1$\\
\hhline{~--------}
& \multirow{1}{*}{$3$} & $4 \times 6$ & $12336$ & $113$ &  &  & $-$ & \cellcolor{gray!30}$0$\\
\hline
\hline
\multirow{2}{*}{$9$} & \multirow{1}{*}{$2$} & $3 \times 6$ & $104$ & $5$ &  &  & $-$ & \cellcolor{gray!30}$1$\\
\hhline{~--------}
& \multirow{1}{*}{$4$} & $6 \times 6$ &  &  &  &  & $53215$ & \cellcolor{gray!30}$696$\\
\hline
\hline
\multirow{4}{*}{$10$} & \multirow{1}{*}{$2$} & $5 \times 4$ & $189$ & $1$ &  &  & $45$ & \cellcolor{gray!30}$0$\\
\hhline{~--------}
& \multirow{2}{*}{$3$} & $5 \times 6$ & $8364560$ & $49$ & $24663$ & $7$ & $8707$ & $7$\\
\hhline{~~-------}
& & $6 \times 5$ & $362120$ & $0$ & $24663$ & $7$ & $8707$ & $7$\\
\hhline{~--------}
& \multirow{1}{*}{$4$} & $5 \times 8$ & \cellcolor{gray!30}$153420549948$ & $1549129$ &  &  & $-$ & \cellcolor{gray!30}$0$\\
\hline
\hline
\multirow{8}{*}{$12$} & \multirow{2}{*}{$2$} & $3 \times 8$ & $4367$ & $15$ &  &  & $-$ & \cellcolor{gray!30}$2$\\
\hhline{~~-------}
& & $4 \times 6$ & $29695$ & $20$ &  &  & $312$ & \cellcolor{gray!30}$2$\\
\hhline{~--------}
& \multirow{3}{*}{$3$} & $4 \times 9$ & \cellcolor{gray!30}$21951950407$ & $249625$ & $2893$ & $1$ & $-$ & $1$\\
\hhline{~~-------}
& & $9 \times 4$ & $-$ & $-$ & $2893$ & $1$ & $-$ & $1$\\
\hhline{~~-------}
& & $6 \times 6$ &  &  &  &  & \cellcolor{gray!30}$3381640$ & \cellcolor{gray!30}$48$\\
\hhline{~--------}
& \multirow{1}{*}{$4$} & $6 \times 8$ &  &  &  &  & \cellcolor{gray!30}$18333488962$ & \cellcolor{gray!30}$0$\\
\hhline{~--------}
& \multirow{1}{*}{$5$} & $6 \times 10$ & $+$ & $+$ &  &  & $-$ & \cellcolor{gray!30}$0$\\
\hhline{~--------}
& \multirow{1}{*}{$6$} & $9 \times 8$ & $+$ & $+$ &  &  & $+$ & $?$\\
\hline
\hline
\multirow{6}{*}{$14$} & \multirow{1}{*}{$2$} & $4 \times 7$ &  &  &  &  & $1632$ & \cellcolor{gray!30}$1$\\
\hhline{~--------}
& \multirow{1}{*}{$3$} & $7 \times 6$ & \cellcolor{gray!30}$2243780000083$ & $44602$ &  &  & \cellcolor{gray!30}$916201732$ & \cellcolor{gray!30}$0$\\
\hhline{~--------}
& \multirow{2}{*}{$4$} & $7 \times 8$ & $+$ & $+$ & $+$ & \cellcolor{gray!30}$684782$ & $+$ & \cellcolor{gray!30}$684782$\\
\hhline{~~-------}
& & $8 \times 7$ & $+$ & \cellcolor{gray!30}$28233890$ & $+$ & \cellcolor{gray!30}$684782$ & $+$ & \cellcolor{gray!30}$684782$\\
\hhline{~--------}
& \multirow{1}{*}{$5$} & $7 \times 10$ &  &  &  &  & $+$ & $?$\\
\hhline{~--------}
& \multirow{1}{*}{$6$} & $7 \times 12$ & $+$ & $+$ &  &  & $-$ & $?$\\
\hline
\end{tabular}
\end{center}
\caption{All non-trivial admissible parameter sets with $v \leq 14$, with existence or the number of non-isotopic proper transposed mono arrays ($\mathrm{MA}^T$), proper sesqui arrays ($\mathrm{SA}$), proper double arrays ($\mathrm{DA}$), triple arrays ($\mathrm{TA}$), proper AO-arrays ($\mathrm{AO}$), and near triple arrays ($\mathrm{NTA}$).
Shaded cells correspond to new results. 
The symbol $-$ indicates that the existence of a proper design is ruled out by Proposition 3.2 from~\cite{jagerEnumerationRowColumnDesigns2024}, $+$ indicates that there are examples but we have no complete enumeration, and $?$ indicates that the question of existence is open.
An empty cell indicates that the parameter set is not admissible.}
\label{tbl:RCDupto14}
\end{table}

\begin{table}[!ht]
\begin{center}
\begin{tabular}{|r|r|r||r|r|r|r|r||r|}
\hline
$v$ & $e$ & $r \times c$ & $\mathrm{MA}^T$ & $\mathrm{SA}$ & $\mathrm{DA}$ & $\mathrm{TA}$ & $\mathrm{AO}$ & $\mathrm{NTA}$\\
\hline
\hline
\multirow{5}{*}{$15$} & \multirow{2}{*}{$2$} & $3 \times 10$ & \cellcolor{gray!30}$274270$ & \cellcolor{gray!30}$44$ &  &  & $-$ & \cellcolor{gray!30}$3$\\
\hhline{~~-------}
& & $6 \times 5$ & \cellcolor{gray!30}$1241870$ & $3$ &  &  & \cellcolor{gray!30}$5761$ & \cellcolor{gray!30}$0$\\
\hhline{~--------}
& \multirow{1}{*}{$3$} & $5 \times 9$ &  &  &  &  & \cellcolor{gray!30}$2530173570$ & \cellcolor{gray!30}$0$\\
\hhline{~--------}
& \multirow{2}{*}{$4$} & $6 \times 10$ & $?$ & $+$ & \cellcolor{gray!30}$+$ & \cellcolor{gray!30}$270119$ & $+$ & \cellcolor{gray!30}$270119$\\
\hhline{~~-------}
& & $10 \times 6$ & $?$ & $0$ & \cellcolor{gray!30}$+$ & \cellcolor{gray!30}$270119$ & $+$ & \cellcolor{gray!30}$270119$\\
\hline
\hline
\multirow{3}{*}{$16$} & \multirow{1}{*}{$2$} & $4 \times 8$ &  &  &  &  & \cellcolor{gray!30}$13428$ & \cellcolor{gray!30}$5$\\
\hhline{~--------}
& \multirow{2}{*}{$3$} & $4 \times 12$ & \cellcolor{gray!30}$+$ & \cellcolor{gray!30}$4592589885$ &  &  & $-$ & \cellcolor{gray!30}$71$\\
\hhline{~~-------}
& & $6 \times 8$ &  &  &  &  & \cellcolor{gray!30}$450296427296$ & \cellcolor{gray!30}$85$\\
\hline
\hline
\multirow{3}{*}{$18$} & \multirow{3}{*}{$2$} & $3 \times 12$ & \cellcolor{gray!30}$22152485$ & \cellcolor{gray!30}$199$ &  &  & $-$ & \cellcolor{gray!30}$14$\\
\hhline{~~-------}
& & $4 \times 9$ & \cellcolor{gray!30}$5315487248$ & \cellcolor{gray!30}$1292$ &  &  & \cellcolor{gray!30}$115557$ & \cellcolor{gray!30}$3$\\
\hhline{~~-------}
& & $6 \times 6$ &  &  &  &  & \cellcolor{gray!30}$522950$ & \cellcolor{gray!30}$10$\\
\hline
\hline
\multirow{4}{*}{$20$} & \multirow{2}{*}{$2$} & $4 \times 10$ &  &  &  &  & \cellcolor{gray!30}$1262840$ & \cellcolor{gray!30}$221$\\
\hhline{~~-------}
& & $5 \times 8$ & \cellcolor{gray!30}$1539025721354$ & \cellcolor{gray!30}$7473$ &  &  & \cellcolor{gray!30}$4408695$ & \cellcolor{gray!30}$17$\\
\hhline{~--------}
& \multirow{2}{*}{$4$} & $5 \times 16$ & $?$ & $+$ & \cellcolor{gray!30}$+$ & \cellcolor{gray!30}$26804$ & $-$ & \cellcolor{gray!30}$26804$\\
\hhline{~~-------}
& & $16 \times 5$ & $-$ & $-$ & \cellcolor{gray!30}$+$ & \cellcolor{gray!30}$26804$ & $-$ & \cellcolor{gray!30}$26804$\\
\hline
\hline
\multirow{2}{*}{$21$} & \multirow{2}{*}{$2$} & $3 \times 14$ & \cellcolor{gray!30}$2147715121$ & \cellcolor{gray!30}$1069$ &  &  & $-$ & \cellcolor{gray!30}$52$\\
\hhline{~~-------}
& & $7 \times 6$ & \cellcolor{gray!30}$608103648334$ & \cellcolor{gray!30}$254$ &  &  & \cellcolor{gray!30}$53169721$ & \cellcolor{gray!30}$0$\\
\hline
\hline
\multirow{1}{*}{$22$} & \multirow{1}{*}{$2$} & $4 \times 11$ &  &  &  &  & \cellcolor{gray!30}$15149193$ & \cellcolor{gray!30}$2453$\\
\hline
\hline
\multirow{2}{*}{$24$} & \multirow{2}{*}{$2$} & $4 \times 12$ & \cellcolor{gray!30}$+$ & \cellcolor{gray!30}$1242243$ &  &  & \cellcolor{gray!30}$200484208$ & \cellcolor{gray!30}$13778$\\
\hhline{~~-------}
& & $6 \times 8$ &  &  &  &  & \cellcolor{gray!30}$7795266899$ & \cellcolor{gray!30}$70$\\
\hline
\hline
\multirow{1}{*}{$25$} & \multirow{1}{*}{$2$} & $5 \times 10$ &  &  &  &  & \cellcolor{gray!30}$7795567507$ & \cellcolor{gray!30}$41153$\\
\hline
\end{tabular}
\end{center}
\caption{A selection of admissible parameter sets with $v > 14$, with existence or the number of non-isotopic proper transposed mono arrays ($\mathrm{MA}^T$), proper sesqui arrays ($\mathrm{SA}$), proper double arrays ($\mathrm{DA}$), triple arrays ($\mathrm{TA}$), proper AO-arrays ($\mathrm{AO}$), and near triple arrays ($\mathrm{NTA}$).
Shaded cells correspond to new results. 
The symbol $-$ indicates that the existence of a proper design is ruled out by Proposition 3.2 from~\cite{jagerEnumerationRowColumnDesigns2024}, $+$ indicates that there are examples but we have no complete enumeration, and $?$ indicates that the question of existence is open.
An empty cell indicates that the parameter set is not admissible.}
\label{tbl:RCDfrom15}
\end{table}

\end{document}